\documentclass[11pt]{article}
\usepackage{amsfonts}
\usepackage{amsthm}
\usepackage{indentfirst}
\usepackage{graphics}
\usepackage{color}
\usepackage{cite}
\usepackage{latexsym}
\usepackage[paper=a4paper, left=2.4cm, right=2.4cm, top=2.5cm, bottom=1.8cm, headheight=5.5pt, footskip=1.0cm, footnotesep=0.8cm, centering, includefoot]{geometry}

\usepackage{amsmath}

\usepackage{todonotes}

\allowdisplaybreaks
\usepackage{amssymb}
\usepackage[dvips]{epsfig}
\usepackage{amscd}

\newtheorem{theorem}{Theorem}[section]
\newtheorem{remark}{Remark}[section]
\newtheorem{definition}{Definition}[section]
\newtheorem{lemma}[theorem]{Lemma}

\newtheorem{proposition}[theorem]{Proposition}

\newcommand{\vp}{\varphi}
\newcommand{\n}{\rho}

\newcommand{\ti}{\tilde}

\DeclareMathOperator{\loc}{loc}

\renewcommand{\div}{ {\rm div }  }

\newcommand{\pa}{\partial}
\renewcommand{\r}{\mathbb{R}}

\newcommand{\bt}{\begin{theorem}}
\newcommand{\bl}{\begin{lemma}}
\newcommand{\el}{\end{lemma}}
\newcommand{\et}{\end{theorem}}
\newcommand{\ga}{\gamma}

\newcommand{\al}{\alpha}
\newcommand{\ve}{\varepsilon}
\newcommand{\de}{\delta}
\newcommand{\la}{\label}

\newcommand{\bn}{\begin{eqnarray}}
\newcommand{\en}{\end{eqnarray}}
\newcommand{\bnn}{\begin{eqnarray*}}
\newcommand{\enn}{\end{eqnarray*}}

\newcommand{\bnnn}{\begin{eqnarray*}}
\newcommand{\ennn}{\end{eqnarray*}}
\newcommand{\ba}{\begin{aligned}}
\newcommand{\ea}{\end{aligned}}
\newcommand{\be}{\begin{equation}}
\newcommand{\ee}{\end{equation}}
\def\O{{\r^2 }}
\def\p{\partial}
\def\norm[#1]#2{\|#2\|_{#1}}
\def\lap{\triangle}

\def\o{\omega}

\newcommand{\si}{\sigma}

\def\la{\label}

\def\na{\nabla}
\def\pl{\partial}

\def\rr{\mathbb{R}^2}

\makeatletter
\@addtoreset{equation}{section}
\makeatother

\makeatletter
\@addtoreset{equation}{section}
\makeatother

\title{Global Well-posedness of Strong Solutions to the Cauchy Problem of 2D Nonhomogeneous Navier-Stokes Equations with Density-Dependent Viscosity and Vacuum}
\author{Bing Y{\small UAN}$^{a}$, Rong Z{\small HANG}$^{a,b}$, Peng Z{\small HOU}$^{c}$,\thanks{email: bingyuan@email.ncu.edu.cn (B. Yuan), rzhang0921@gmail.com (R. Zhang), zp000459@163.com (P. Zhou)} \\
{\normalsize a. School of Mathematics and Computer Sciences,}\\ {\normalsize  Nanchang University, Nanchang 330031, P. R. China;}\\
{\normalsize b. Institute of Mathematics and Interdisciplinary Sciences,}\\ {\normalsize  Nanchang University, Nanchang 330031, P. R. China;}\\
{\normalsize c. Jiangxi Flight University, Nanchang 330088, P. R. China;}}
\date{ }
\begin{document}
\maketitle
\begin{abstract}
This paper is concerned with the Cauchy problem for the modified two-dimensional (2D) nonhomogeneous incompressible Navier-Stokes equations with density-dependent viscosity. By fully using the structure of the system, 
we can obtain the key estimates of $\|\nabla \rho\|_{L_t^\infty L_x^q},q>2$ without any smallness asuumption on the initial data, and thus establish the global existence of the strong solutions with the far-field density being either vacuum or nonvacuum. Notably, the initial data can be arbitrarily large and the initial density is allowed to vanish. Furthermore, the large-time asymptotic behavior of the gradients of the velocity and the pressure is also established.

\end{abstract}
\textbf{Keywords:} Incompressible Navier-Stokes equations; Density-dependent viscosity; Large initial data; Vacuum

\section{Introduction}
In this paper, we consider the global solvability of the following 2D nonhomogeneous incompressible Navier-Stokes equations \cite{Z2008}:
\be\label{NS}
\begin{cases}
\partial_{t}\rho+\div(\rho u)=0, \\
\partial_{t}(\rho u )+\div(\rho u \otimes u )-\nabla^\bot\big(\mu(\rho)\omega\big)+\nabla P=0,\\
\div u=0,
\end{cases}
\ee
where $\rho=\rho(x,t)$, $u=(u^1,u^2)(x,t)$ and $P=P(x,t)$ represent the density, velocity and pressure of the fluid, respectively. $\o=\nabla^\perp\cdot u\triangleq\partial_2 u^1-\partial_1 u^2$ is the vorticity of the fluid and notation $\na^\perp$ stands for $\na^\perp f\triangleq(\partial_2 f, -\partial_1 f)$. The viscosity $\mu(\rho)$ satisfies the following hypothesis:
\begin{equation}\label{vis}
\displaystyle \mu \in C^1[0, \infty), \quad \mu(\rho)>0.
\end{equation}
Notice that as $\div u=0$, $\nabla^\bot\big(\mu(\rho)\omega\big)=\mu\Delta u$ when $\mu(\rho)=\mu=const$, so that the equation \eqref{NS} coincides with the classical nonhomogeneous, incompressible Navier-Stokes equation.

Let $\rho_\infty$ be a fixed nonnegative constant. We consider the Cauchy problem of \eqref{NS} with  $\left(\rho, u\right)$ satisfying the given initial data:
\begin{equation}\label{cztj}
	\rho(x,0)=\rho_0(x),\ \rho u (x,0)=\rho_0 u _0(x), \ x\in\mathbb{R}^2,
\end{equation}
and far-field behavior:
\begin{align}\label{ydxw}
	\displaystyle (\rho, u) \rightarrow (\rho_\infty, 0),\quad \text { as }|x| \rightarrow \infty.
\end{align}

According to \cite{L1996}, the more general density-dependent viscosity Navier-Stokes equations read as follows:
\be
\begin{cases}\label{rNS}
	\partial_{t}\rho+\div(\rho u)=0, \\
	\partial_{t}(\rho u )+\div(\rho u \otimes u )-\div\big(2\mu(\rho)\mathcal{D}(u)\big)+\nabla P=0,\\
	\div u=0,
\end{cases}
\ee
where $\mathcal{D}(u)=\frac12\big[\nabla u+(\nabla u)^T\big]$  is the deformation tensor. The system \eqref{NS} coincides with \eqref{rNS} provided that $\mu(\rho)$ is independent of $\rho$.

There is a lot of literature on the mathematical study of nonhomogeneous incompressible flow.
In particular, the system \eqref{NS} with constant viscosity has been
investigated extensively. When $\rho_0$ has a positive lower bound, Kazhikov \cite{K1974} established the global existence of weak solutions. Antontsev-Kazhikov-Monakhov \cite{AKM1990} gave the first result on local existence and uniqueness of strong solutions. Later, Abidi-Gui-Zhang \cite{Abi2011} proved the global well-posedness for the 3D Cauchy problem with small initial data in critical spaces. Abidi-Gui \cite{Abi2021} got the global well-posedness for the 2D Cauchy problem in critical Besov spaces and large initial data.
When $\rho_0$ is allowed to vanish, the global existence of weak solutions with finite energy is proved by Simon \cite{S1990}. Under some compatibility conditions, Choe-Kim \cite{CK2003} established the local existence of strong solutions for 3D bounded and unbounded domains. The global existence of strong solutions was obtained by L\"u-Shi-Zhong \cite{Lv2018} for 2D Cauchy problem with large initial data and Craig-Huang-Wang \cite{craig2013global} for the 3D Cauchy problem under some smallness conditions on the initial velocity.

When the viscosity is density-dependent, it is more difficult to investigate the global well-posedness of system \eqref{rNS} due to the strong coupling between viscosity coefficient and velocity. In fact, when the $\rho_0$ has a positive lower bound, Abidi-Zhang \cite{zhang5, zhang2015} obtained the global strong solutions in 2D or 3D under smallness conditions on $\|\mu(\rho_0)-1\|_{L^\infty}$, and then Paicu-Zhang \cite{pa2020} studied the problem for discontinuous initial data. Recently, Huang-Li-Zhang \cite{H-l-r} proved the global existence of strong solutions to 3D initial-boundary vaule problem provided the initial density is large enough. This is the first result concerning the well-posedness of large strong solution
to \eqref{rNS} in 3D without smallness of the velocity.
When the $\rho_0$ is allowed to vanish, Lions \cite{L1996} first obtained the global weak solutions with finite energy. Later, under the additional assumptions $\mu(\rho)$ is a small perturbation of a positive constant, Desjardins \cite{D1997} established the global weak solution with higher regularity for 2D case. Concerning the strong solutions, Huang-Wang \cite{HW2014} obtained the global strong solutions in 2D bounded domains when $\|\nabla\mu(\rho_0)\|_{L^q}(q>2)$ is small enough.
Huang-Wang \cite{HW2015} and Zhang \cite{Z2015} established the global strong solutions with small $\|\nabla u_0\|_{L^2}$ in 3D bounded domains.
For the 3D Cauchy problem, He-Li-L\"u \cite{He2021} proved the global well-posedness and exponential stability of strong solution under some smallness conditions on the initial velocity. However, this approach cannot extend to 2D. The key exponential decay-in-time in \cite{He2021} depends critically on the 3D Sobolev inequality, which fails in 2D whole space. Hence, the existence of global strong solutions with vacuum for the 2D Cauchy problem is still open, which is one of the main objectives of this paper.

It is noteworthy that all the global results for the strong solutions to the general density-dependent model \eqref{rNS} in particular for 2D case require some smallness assumptions on the velocity no matter for non-vacuum \cite{zhang5} or vacuum \cite{HW2014} cases. This contrasts sharply with the constant-viscosity case, where the global existence of strong solutions is obtained for any large data \cite{Abi2021,Lv2018}. Whether such large-data solutions can also be constructed in 2D for density-dependent case, and this forms another key motivation of our work. Concerning the revised model \eqref{NS}, Zhang \cite{Z2008} first obtained the global solution for large initial data by Littlewood-Paley theory when the initial density is strictly away from vacuum, then Liu \cite{Liu2019} reduced the regularity of $u_0$ to some extent. Thus, the question is raised: can global existence be established for the 2D Cauchy problem with vacuum and arbitrarily large initial data?
In this paper, we provide an affirmative answer to this question at least for system \eqref{NS}. More precisely, the main aim of this paper is to establish the global existence of strong solutions to the 2D Cauchy problem of \eqref{NS} with 
vacuum and large initial data.

Before formulating the main results, we first explain the notations and
conventions used throughout this paper. For integer $k\ge 1$ and $1\le p\le \infty$, we denote the standard Lebesgue and Sobolev spaces as follows:
\begin{equation}
\begin{cases}\nonumber
L^p=L^p(\r^2 ),\quad W^{k,p}= W^{k,p}(\r^2), \quad H^k = W^{k,2},\\
\|\cdot\|_{B_1 \cap B_2}=\|\cdot\|_{B_1}+\|\cdot\|_{B_2}, \text { for two Banach spaces } B_1 \text { and } B_2, \\
D^{k,p}=D^{k,p}(\r^2)=\{v\in L^1_{loc}(\r^2)|\nabla^kv\in L^p(\r^2)\},\\
C_{0,\sigma}^\infty=\{f\in C_0^\infty~|~\div f=0\},\quad D_{0,\sigma}^1=\overline{C_{0,\sigma}^\infty}~\mbox{closure~in~the~norm~of}~D^{1,2}.
\end{cases}
\end{equation}
For $R>0$, set
\begin{equation*}
B_R \triangleq\left.\left\{x\in\r^2\right|\,|x|<R \right\},
\quad \int f dx\triangleq\int_{\r^2}f dx.
\end{equation*}
Moreover, the material derivative of $f$, the upper bound of $\rho$ and
$\mu'$, the lower bound of $\mu$  are respectively defined by
$$
\dot{f}\triangleq f_t+u\cdot\nabla f,\quad \bar\rho \triangleq \max_{x\in \mathbb{R}^2} \rho_0,
\quad \bar{\mu'}\triangleq \max _{\rho \in[0, \bar{\rho}]} \mu'(\rho),\quad \underline{\mu} \triangleq \min _{\rho \in[0, \bar{\rho}]} \mu(\rho).
$$

Next, we give the definition of strong solutions to \eqref{NS}:
\begin{definition}\label{def1}
If all derivatives involved in \eqref{NS} for $(\rho,u,P)$ are regular distributions, and equations \eqref{NS} hold almost everywhere in $\mathbb{R}^2\times(0,T)$, then $(\rho,u,P)$ is called a strong solution to \eqref{NS}.
\end{definition}

Then, the first main result Theorem \ref{T1}, concerning the global existence of strong solution to the Cauchy problem \eqref{NS}-\eqref{ydxw} with vacuum state at far field, can be stated as follows:
\begin{theorem}\label{T1}
Let $\rho_\infty=0$, assume the initial data $(\rho_0\ge 0, u _0)$ satisfy that for some $a\in(1, 2)$ and $q>2$,
\begin{equation}\label{ct}
\rho_{0}\bar{x}^{a}\in L^{1}\cap H^{1}\cap W^{1,q},\ u_0\in D_{0,\sigma}^1,\ \sqrt{\rho_0} u _0\in L^2,
\end{equation}
where
\begin{equation}\label{xct}
\bar{x}\triangleq(e+|x|^2)^{1/2}\log^{1+\eta_0}(e+|x|^2),\quad \eta_0>0.
\end{equation}
Then the Cauchy problem \eqref{NS}-\eqref{ydxw} has a unique global strong solution $(\rho, u, P)$ satisfying that
for any $0<T<\infty$,
\begin{equation}\label{zzx}
\begin{cases}
\rho\in C([0,T];L^{1}\cap H^{1}\cap W^{1,q}), &\\
\rho\bar{x}^{a}\in L^{\infty}(0,T;L^{1}\cap H^{1}\cap W^{1,q}), &\\
\sqrt{\rho} u ,\ \nabla u ,\ \bar{x}^{-1} u ,\
\sqrt{t}\sqrt{\rho} u_t,~ \sqrt{t}\na P, ~\sqrt{t}\na^2 u\in L^{\infty}(0,T;L^2), &\\%
\nabla u \in L^{2}(0,T;H^1)\cap L^{(q+1)/q}(0,T;W^{1,q}), &\\
\na P\in  L^2(0,T;L^2)\cap  L^{(q+1)/q}(0,T;L^q), \\
\sqrt{t}\nabla u \in L^{2}(0,T;W^{1,q}),\ \sqrt{t}\nabla P \in L^{2}(0,T;L^q), &\\
\sqrt{\rho}u_t ,\ \sqrt{t}\nabla u_t ,\ \sqrt{t}\bar{x}^{-1}u_t\in L^{2}(\mathbb{R}^{2}\times(0,T)),
\end{cases}
\end{equation}
and
\begin{equation}\label{rjg}
\inf_{0\leq t\leq T}\int_{B_{N_{1}}}\rho(x,t)dx\geq\frac{1}{4},
\end{equation}
for some positive constant $N_1$. Moreover, $(\rho,u,P)$ has the following decay rates, that is, for any $p\in[2,\infty)$ and $t\geq1$, we have
\be \label{dsjxw}
t^\frac{p-1}{p}\|\nabla u(\cdot,t)\|_{L^p}+t\|\na\big(\mu(\rho) \o\big)(\cdot,t)\|_{L^2}+t\|\na P(\cdot,t)\|_{L^2}\le C,
\ee
where $C$ depends only on $p$, $\underline\mu, \bar{\mu'}, \|\n_0\|_{L^2\cap L^\infty}, \|\sqrt{\n_0} u_0\|_{L^2}$  and $\|\na u_0\|_{L^2} $.
\end{theorem}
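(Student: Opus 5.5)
Theorem \ref{T1} follows the usual scheme: a local existence result (Choe–Kim/Lü–Shi–Zhong type, adapted to the modified viscous term), a priori estimates depending only on $T$ and the data, and a continuation argument. The main structural point is that $\div u=0$ gives $u=\nabla^\perp\psi$ with $\Delta\psi=-\o$. The momentum equation then reads
\begin{equation*}
\rho\dot u-\nabla^\perp\big(\mu(\rho)\o\big)+\nabla P=0 .
\end{equation*}
Taking $\nabla^\perp\cdot$ and $\div$ of it gives two elliptic equations:
\begin{equation*}
\Delta\big(\mu(\rho)\o\big)=\nabla^\perp\cdot(\rho\dot u),\qquad \Delta P=-\div(\rho\dot u).
\end{equation*}
So $F:=\mu(\rho)\o$ and $P$ behave like effective viscous flux quantities, with
\begin{equation*}
\|\nabla F\|_{L^p}+\|\nabla P\|_{L^p}\le C\|\rho\dot u\|_{L^p}
\end{equation*}
and no dependence on $\nabla\rho$. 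This is what should remove any smallness assumption.

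\textbf{Steps 1–2: energy and $\|\nabla u\|_{L^2}$ bounds.} Since $\rho$ is transported by a divergence-free field, $0\le\rho\le\bar\rho$ and $\mu(\rho)\ge\underline\mu$. The energy identity gives
\begin{equation*}
\|\sqrt\rho u\|_{L^2}^2+2\int_0^t\!\!\int\mu(\rho)\o^2\le C,
\end{equation*}
and since $\|\o\|_{L^2}=\|\nabla u\|_{L^2}$ this controls $\int_0^T\|\nabla u\|_{L^2}^2$. Next I multiply the momentum equation by $\dot u$. The viscous term becomes
\begin{equation*}
\int F\,\nabla^\perp\cdot\dot u=\frac12\frac{d}{dt}\int\mu(\rho)\o^2+\int(\text{cubic terms in }\nabla u)\cdot F .
\end{equation*}
Here I use $\dot\o=\nabla^\perp\cdot\dot u-(\text{quadratic in }\nabla u)$ and $\dot{\mu(\rho)}=0$. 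The cubic term is bounded by $\|F\|_{L^4}\|\nabla u\|_{L^{8/3}}^2$, and the pressure term by $P\,\partial_iu^j\partial_ju^i$. I estimate both with Gagliardo–Nirenberg and the elliptic bounds above. The step I need is $\|\nabla u\|_{L^p}\le C(\|F\|_{L^p}+\|\nabla u\|_{L^2})$: $\o=F/\mu(\rho)$ is bounded pointwise by $F$, and $\nabla u$ is recovered from $\o$ by a Riesz transform. This yields
\begin{equation*}
\frac{d}{dt}\|\sqrt{\mu}\,\o\|_{L^2}^2+\|\sqrt\rho\dot u\|_{L^2}^2\le C\|\nabla u\|_{L^2}^4 .
\end{equation*}
Grönwall with $\int_0^T\|\nabla u\|_{L^2}^2\le C$ then gives a bound uniform in time, with no smallness. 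For the $\sqrt{t}$-weighted bounds, I apply $\partial_t+\div(u\,\cdot)$ to the momentum equation and test with $\dot u$, giving $\sup_t t\|\sqrt\rho\dot u\|_{L^2}^2+\int_0^T t\|\nabla\dot u\|_{L^2}^2\le C$. The vacuum in 2D is handled with the weighted inequality
\begin{equation*}
\|\bar x^{-1}v\|_{L^2}\le C\big(\|\sqrt\rho v\|_{L^2}+\|\nabla v\|_{L^2}\big),
\end{equation*}
which is valid because of \eqref{rjg}. Property \eqref{rjg} itself comes from mass conservation together with the control of $\int\rho\bar x^a$.

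\textbf{Step 3 (main obstacle): the $L^\infty_tL^q_x$ bound on $\nabla\rho$.} Differentiating the transport equation gives
\begin{equation*}
\frac{d}{dt}\|\nabla\rho\|_{L^q}\le C\|\nabla u\|_{L^\infty}\|\nabla\rho\|_{L^q}.
\end{equation*}
The danger is that $\nabla^2u$ involves $\nabla\mu(\rho)\,\o$, which is circular. I avoid this by writing $\nabla u$ as a Riesz transform of $\o=F/\mu(\rho)$. Then $\|\nabla u\|_{L^\infty}$ is controlled by the logarithmic Beale–Kato–Majda inequality:
\begin{equation*}
\|\nabla u\|_{L^\infty}\le C\big(1+\|\o\|_{L^\infty}\log(e+\|\nabla\o\|_{L^q})+\|\nabla u\|_{L^2}\big),
\end{equation*}
and $\|\o\|_{L^\infty}\le C\|F\|_{L^\infty}\le C\|F\|_{L^2}^{\theta}\|\rho\dot u\|_{L^q}^{1-\theta}$, which is time-integrable by Step 2. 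Since $\nabla\o=\nabla F/\mu-F\mu'\nabla\rho/\mu^2$,
\begin{equation*}
\log(e+\|\nabla\o\|_{L^q})\le C\log(e+\|\rho\dot u\|_{L^q})+C\log(e+\|\nabla\rho\|_{L^q}).
\end{equation*}
Grönwall applied to $\log(e+\|\nabla\rho\|_{L^q})$ closes the estimate. I need $\int_0^T\|\rho\dot u\|_{L^q}^{(q+1)/q}<\infty$, which follows by interpolating $t$-weighted bounds; this is where the exponent $(q+1)/q$ in \eqref{zzx} comes from. The $H^1$ and $\bar x^a$-weighted bounds on $\rho$ follow in the same way, using $\int_0^T\|\nabla u\|_{L^\infty}\le C$.

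\textbf{Steps 4–5: remaining regularity, uniqueness, decay.} With $\nabla\rho$ controlled, the elliptic bounds give $\nabla^2u,\nabla P$ in the spaces listed in \eqref{zzx}, and a standard continuation plus compactness argument extends the local solution globally. Uniqueness is an $L^2$ stability estimate in Lagrangian or weighted form, as in Lü–Shi–Zhong. For \eqref{dsjxw}, I multiply the Step 2 inequality by $t$ to get $t\|\nabla u\|_{L^2}^2\le C$, and the $\dot u$-estimate by $t^2$ to get $t^2\|\sqrt\rho\dot u\|_{L^2}^2\le C$. Combined with the elliptic bounds this gives $t\|\nabla F\|_{L^2}+t\|\nabla P\|_{L^2}\le C$, and interpolating
\begin{equation*}
\|\nabla u\|_{L^p}\le C\|\nabla u\|_{L^2}^{2/p}\|F\|_{L^\infty\cap L^2}^{1-2/p}
\end{equation*}
gives the rate $t^{-(p-1)/p}$. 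These constants depend only on the listed quantities, since Steps 1–2 did not use $\nabla\rho$.
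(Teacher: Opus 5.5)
\textbf{Gap in Step 2: the pressure term.} Your $\dot u$-estimate does not close as written. The problem is the term $\int P\,\partial_iu^j\partial_ju^i\,dx$, which Gagliardo--Nirenberg plus the elliptic bounds cannot bound by $\tfrac12\|\sqrt\rho\dot u\|_{L^2}^2+C\|\nabla u\|_{L^2}^4$.

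Unlike $F=\mu(\rho)\omega$, for which $\|F\|_{L^2}\le C\|\nabla u\|_{L^2}$, the pressure has no lower-order norm; only $\nabla P$ is controlled by $\rho\dot u$. Any Lebesgue pairing therefore uses some $p<\infty$ and
\[
\|P\|_{L^p}\le C\|\rho\|_{L^{p/2}}^{1/2}\|\sqrt\rho\dot u\|_{L^2},\qquad \|\nabla u\|_{L^{2p/(p-1)}}^2\le C\|\nabla u\|_{L^2}^{2-2/p}\|\sqrt\rho\dot u\|_{L^2}^{2/p}.
\]
After Young's inequality this gives
\[
\Big|\int P\,\partial_iu^j\partial_ju^i\,dx\Big|\le \tfrac12\|\sqrt\rho\dot u\|_{L^2}^2+C\|\nabla u\|_{L^2}^{4(p-1)/(p-2)} .
\]
Since $4(p-1)/(p-2)>4$, you get schematically $X'\le CX^{2+2/(p-2)}$ for $X=\|\nabla u\|_{L^2}^2$. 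With only $\int_0^TX\,dt\le C$ available, this cannot be closed for large data; it is exactly the kind of inequality that forces a smallness assumption.

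\textbf{The missing idea.} You need the endpoint $p=\infty$, realised through Hardy--BMO duality (Lemma \ref{lem26}, used in \eqref{zaqa}). Write the integrand as $\partial_iu\cdot\nabla u^i$, where $\div(\partial_iu)=0$ and $\nabla^\perp\cdot\nabla u^i=0$. The div--curl lemma then gives $\|\partial_iu\cdot\nabla u^i\|_{\mathcal H^1}\le C\|\nabla u\|_{L^2}^2$. Combined with $\|P\|_{\rm BMO}\le C\|\nabla P\|_{L^2}\le C\|\sqrt\rho\dot u\|_{L^2}$, this yields exactly the quartic bound. Grönwall with $\int_0^T\|\nabla u\|_{L^2}^2dt\le C$ then gives your time-independent estimate. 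The same duality is what gives the two-sided bound for the functional $\frac12\int\rho|\dot u|^2-\int P\partial_ju^i\partial_iu^j$ in the weighted $\dot u$ estimate. Your viscous term, by contrast, is harmless. In 2D with $\div u=0$ one has $\nabla^\perp\cdot(u\cdot\nabla u)=u\cdot\nabla\omega$, so your cubic term vanishes identically, and your bound for it would close at the quartic level anyway.

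\textbf{A smaller slip in the decay step.} $\|F\|_{L^\infty}$ is not controlled by $\|\nabla F\|_{L^2}$ in 2D, and you have no time-uniform bound on $\|\rho\dot u\|_{L^q}$. Interpolate instead as
\[
\|\nabla u\|_{L^p}\le C\|F\|_{L^p}\le C\|F\|_{L^2}^{2/p}\|\nabla F\|_{L^2}^{1-2/p}\le C\|\nabla u\|_{L^2}^{2/p}\|\sqrt\rho\dot u\|_{L^2}^{1-2/p},
\]
which gives the rate $t^{-(p-1)/p}$.

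Apart from these points, your Step 3 is the paper's argument: Beale--Kato--Majda with $\nabla\omega=\nabla F/\mu-F\mu'\nabla\rho/\mu^2$, the logarithmic Grönwall inequality for $\|\nabla\rho\|_{L^q}$, and $\int_0^T\|\rho\dot u\|_{L^q}^{(q+1)/q}dt<\infty$ via the weighted density estimate.
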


For the case of the Cauchy problem \eqref{NS}-\eqref{ydxw} with the far-field density being away from vacuum, one can obtain the following global existence result:
\begin{theorem}\label{T2}
Let $\rho_\infty>0$, assume the initial data $(\rho_0\ge 0, u _0)$ satisfy that for some $r\in (1, \infty)$ and $q>2$,
\begin{equation}\label{2ct}
\rho_{0}-\rho_\infty\in L^r\cap L^\infty\cap D^{1,q},\ u_{0}\in D_{0,\sigma}^1,\ \sqrt{\rho_0} u _0\in L^2.
\end{equation}
Then the Cauchy problem \eqref{NS}-\eqref{ydxw} has a unique global strong solution $(\rho, u, P)$ satisfying that
for any $0<T<\infty$,
\begin{equation}\label{2zzx}
\begin{cases}
\rho-\rho_\infty\in C([0,T];L^{r}\cap L^\infty\cap D^{1,q}), &\\
\sqrt{\rho}u ,\ u ,\ \nabla u ,\
\sqrt{t} \sqrt{\rho}u_t,~ \sqrt{t}\na P,~ \sqrt{t}\na^2 u\in L^{\infty}(0,T;L^2), &\\
\nabla u \in L^{2}(0,T;H^1)\cap L^{(q+1)/q}(0,T;W^{1,q}), &\\
\na P\in  L^2(0,T;L^2)\cap  L^{(q+1)/q}(0,T;L^q), \\
\sqrt{t}\nabla u \in L^{2}(0,T;W^{1,q}),\ \sqrt{t}\nabla P \in L^{2}(0,T;L^q), &\\
\sqrt{\rho}u_t ,\ \sqrt{t}\nabla u_t ,\ \sqrt{t}u_t\in L^{2}(\mathbb{R}^{2}\times(0,T)).
\end{cases}
\end{equation}
Moreover, $(\rho,u,P)$ has the following decay rates, that is, for any $p\in[2,\infty)$ and $t\geq1$, we have
\be \la{2dsjxw}
t^\frac{p-1}{p}\|\nabla u(\cdot,t)\|_{L^p}+t\|\na\big(\mu(\rho) \o\big)(\cdot,t)\|_{L^2}+t\|\na P(\cdot,t)\|_{L^2}\le C,
\ee
where $C$ depends only on $p, r, \rho_\infty, \underline\mu, \bar{\mu'}, \|\n_0-\rho_\infty\|_{L^r\cap L^\infty}, \|\sqrt{\n_0} u_0\|_{L^2}$ and $\|\na u_0\|_{L^2} $.
\end{theorem}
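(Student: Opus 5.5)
The plan is to reproduce the strategy for Theorem \ref{T1}, replacing the weighted-$L^p$ machinery used for $u$ when $\rho_\infty=0$ with the direct control available when $\rho\to\rho_\infty>0$ at infinity.

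\emph{Local solution and continuation.} We start from a local strong solution. For positive-density approximations ($\rho_0+\delta$) this is classical, and we then pass to the limit $\delta\to0$. All estimates below are uniform in $\delta$ and in $T$ (or depend only on $T$ where noted), so a standard continuation argument then gives global existence. Uniqueness follows from a Lagrangian or weighted $L^2$ stability estimate on the difference of two solutions, using $\nabla u\in L^1(0,T;L^\infty)$. This last property comes from $\nabla u\in L^{(q+1)/q}(0,T;W^{1,q})$.

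\emph{Basic estimates.} Since the flow is divergence free, transport gives $\|\rho-\rho_\infty\|_{L^r\cap L^\infty}(t)=\|\rho_0-\rho_\infty\|_{L^r\cap L^\infty}$ and $0\le\rho\le\bar\rho$. The energy identity reads
\[\tfrac12\tfrac{d}{dt}\|\sqrt\rho u\|_{L^2}^2+\int\mu(\rho)\omega^2dx=0,\]
and $\|\omega\|_{L^2}=\|\nabla u\|_{L^2}$. To control $u$ itself in $L^2$ (needed because $u\in L^\infty(0,T;L^2)$ is claimed), we write
\[\|u\|_{L^2}^2\le \rho_\infty^{-1}\big(\|\sqrt\rho u\|_{L^2}^2+\|(\rho_\infty-\rho)|u|^2\|_{L^1}\big),\]
and bound the last term by $\|\rho-\rho_\infty\|_{L^r}\|u\|_{L^{2r'}}^2$. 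Gagliardo--Nirenberg and Young then give a bound in terms of $\|\sqrt\rho u\|_{L^2}$ and $\|\nabla u\|_{L^2}$.

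\emph{Higher-order estimates.} The structure of \eqref{NS} is the key point. Set $F\triangleq\mu(\rho)\omega$. Then $\rho\dot u=\nabla^\perp F-\nabla P$, and taking div and curl gives
\[\Delta P=-\div(\rho\dot u),\qquad \Delta F=\nabla^\perp\cdot(\rho\dot u).\]
Hence
\[\|\nabla F\|_{L^p}+\|\nabla P\|_{L^p}\le C\|\rho\dot u\|_{L^p},\]
with no dependence on $\nabla\rho$. Multiplying the momentum equation by $u_t$ (or $\dot u$) yields
\[\tfrac{d}{dt}\int\mu(\rho)\omega^2dx+\|\sqrt\rho\dot u\|_{L^2}^2\le C\|\nabla u\|_{L^3}^3.\]
Here $\frac{d}{dt}\mu(\rho)$ is handled through the transport equation $\dot{\mu(\rho)}=0$, which is exactly why $\mu'$ never appears with $\nabla\rho$. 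The cubic term is estimated via $\omega=F/\mu$ and $\nabla u=\nabla\nabla^{\perp}\Delta^{-1}\omega$:
\[\|\nabla u\|_{L^3}^3\le C\|\omega\|_{L^2}^2\|\nabla F\|_{L^2}\le C\|\nabla u\|_{L^2}^2\|\sqrt\rho\dot u\|_{L^2}.\]
Gronwall with $\int_0^T\|\nabla u\|_{L^2}^2\,dt\le C$ then gives a bound on $\sup_t\|\nabla u\|_{L^2}^2$ with no smallness. Next we apply $\dot{}$ to the momentum equation and do the $t$-weighted estimate for $\sqrt t\sqrt\rho\dot u$. This gives $\sqrt t\nabla\dot u\in L^2L^2$ and, via $\|\nabla F\|_{L^q}\le C\|\rho\dot u\|_{L^q}$, the bound $\sqrt t\nabla F\in L^2L^q$. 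Here the Poincaré-type control of $\|\dot u\|_{L^q}$ uses $\rho\ge\rho_\infty/2$ outside a bounded set of finite measure, again thanks to the $L^r$ assumption.

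\emph{Main obstacle: $\|\nabla\rho\|_{L^\infty L^q}$.} Differentiating the transport equation gives
\[\tfrac{d}{dt}\|\nabla\rho\|_{L^q}\le C\|\nabla u\|_{L^\infty}\|\nabla\rho\|_{L^q}.\]
The difficulty is that $\nabla u$ is not controlled by $F$ alone once $\nabla\rho$ is involved. The plan is to bound $\|\nabla u\|_{L^\infty}$ using only $\omega=F/\mu(\rho)$:
\[\|\nabla u\|_{L^\infty}\le C\big(1+\|\omega\|_{L^\infty}\log(e+\|\nabla\omega\|_{L^q})\big),\]
with $\|\omega\|_{L^\infty}\le C\|F\|_{L^2}^{\theta}\|\nabla F\|_{L^q}^{1-\theta}$ and $\|\nabla\omega\|_{L^q}\le C(\|\nabla F\|_{L^q}+\|F\|_{L^\infty}\|\nabla\rho\|_{L^q})$. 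This yields
\[\tfrac{d}{dt}\log(e+\|\nabla\rho\|_{L^q})\le C\,\|\omega\|_{L^\infty}\log(e+\|\nabla\rho\|_{L^q})+\dots,\]
and since $\|\omega\|_{L^\infty}\in L^1(0,T)$ from the previous step, the log-Gronwall argument closes with no smallness. The key fact making this work is that $\|\omega\|_{L^\infty}$ and $\nabla F$ are controlled without $\nabla\rho$. The same bounds then give $\nabla u\in L^{(q+1)/q}W^{1,q}$ and the $\nabla^2u$ and $\nabla P$ estimates.

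\emph{Decay \eqref{2dsjxw}.} Multiply the $\nabla u$ energy inequality by $t$ to get $t\|\nabla u\|_{L^2}^2\le C$, and the $\dot u$ estimate by $t^2$ to get $t^2\|\sqrt\rho\dot u\|_{L^2}^2\le C$. These imply
\[t\big(\|\nabla F\|_{L^2}+\|\nabla P\|_{L^2}\big)\le C.\]
The $L^p$ rate for $\nabla u$ then follows by interpolating $\|\nabla u\|_{L^p}\le C\|\nabla u\|_{L^2}^{2/p}\|\nabla F\|_{L^2}^{1-2/p}$, which gives $t^{-1/p}\cdot t^{-(1-2/p)}=t^{-(p-1)/p}$.
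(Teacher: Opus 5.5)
Your overall architecture matches the paper's: transport and energy bounds, $L^2$ control of $u$ through $(\rho_\infty-\rho)_+\in L^r$, the elliptic structure $\Delta(\mu(\rho)\omega)=\nabla^\perp\cdot(\rho\dot u)$, $\Delta P=-\mathrm{div}(\rho\dot u)$, the $\dot u$ multiplier, $t$-weighted bounds on $\sqrt\rho\dot u$, the Beale--Kato--Majda/log-Gr\"onwall step for $\|\nabla\rho\|_{L^q}$, and interpolation for the decay. What is not right is the handling of the pressure. This is exactly where $\rho_\infty>0$ differs from the vacuum case.

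At the first level, your inequality $\frac{d}{dt}\int\mu(\rho)\omega^2dx+\|\sqrt\rho\dot u\|_{L^2}^2\le C\|\nabla u\|_{L^3}^3$ is not what the $\dot u$ multiplier produces. Since $\nabla^\perp\cdot(u\cdot\nabla u)=u\cdot\nabla\omega$ for $\mathrm{div}\,u=0$ and $\mu(\rho)$ is transported, the viscous term is an exact time derivative with no cubic remainder. The only remainder is the pressure term $\int P\,\partial_iu^j\partial_ju^i\,dx$. It cannot be bounded by $\|\nabla u\|_{L^3}^3$, because $P$ is driven by $\rho\dot u$, and unlike the vacuum case you have no $L^p$ bound on $P$ to fall back on. It is closed by the div--curl lemma and $\mathcal{H}^1$--BMO duality: $|\int P\,\partial_iu^j\partial_ju^i\,dx|\le C\|\nabla P\|_{L^2}\|\nabla u\|_{L^2}^2\le C\|\sqrt\rho\dot u\|_{L^2}\|\nabla u\|_{L^2}^2$. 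That is the bound you ended with, so this step is repairable.

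The real gap is the $t$-weighted estimate for $\sqrt\rho\dot u$, which you only announce. There the pressure enters as $-\int\dot u^j(\partial_t\partial_jP+u\cdot\nabla\partial_jP)\,dx$. The treatment used when $\rho_\infty=0$ reduces this to $\|P\|_{L^4}^4$, controlled via $\|P\|_{L^4}\le C\|\rho\dot u\|_{L^{4/3}}\le C\|\rho\|_{L^2}^{1/2}\|\sqrt\rho\dot u\|_{L^2}$. That bound is unavailable when $\rho\to\rho_\infty>0$. The paper instead uses $\mathrm{div}\,u_t=0$ to rewrite the term as
\[
-\frac{d}{dt}\int u^k\partial_ku^j\partial_jP\,dx+2\int\left(\dot u^k\partial_ku^j\partial_jP-u^i\partial_iu^k\partial_ku^j\partial_jP\right)dx+\int\left(\partial_ju^i\partial_iu^j\,u^k\partial_kP+\dot u^j\partial_ju^k\partial_kP\right)dx,
\]
so that only $\nabla P$ occurs. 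The terms are then handled as follows:
\begin{itemize}
\item The $\dot u$-terms are bounded by div--curl together with $\|\dot u\|_{\mathrm{BMO}}\le C\|\nabla\dot u\|_{L^2}$.
\item The quartic terms are bounded by $\|u\|_{L^\infty}\|\nabla u\|_{L^4}^2\|\nabla P\|_{L^2}$, with $\|u\|_{L^\infty}\le C\|\nabla u\|_{L^2}^{1/3}\|\sqrt\rho\dot u\|_{L^2}^{1/3}$. This is where your $L^2$ bound on $u$ is actually needed.
\item The resulting superlinear term $\|\nabla u\|_{L^2}^{4/3}\|\sqrt\rho\dot u\|_{L^2}^{7/3}$ must be absorbed using the weighted sup-norms, not by a plain Gr\"onwall argument.
\end{itemize}
Without this, the bounds on $\sqrt t\nabla\dot u$ and $t\sqrt\rho\dot u$ are unproved, and your log-Gr\"onwall step and the decay rates both rest on them.

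A smaller point: $\nabla u\in L^{(q+1)/q}(0,T;W^{1,q})$ does not follow from $\sqrt t\,\nabla(\mu(\rho)\omega)\in L^2(0,T;L^q)$ alone, since H\"older leaves $\int_0^1 t^{-(q+1)/(q-1)}dt=\infty$. You need the interpolation $\|\rho\dot u\|_{L^q}\le C\|\sqrt\rho\dot u\|_{L^2}+C\|\sqrt\rho\dot u\|_{L^2}^{2(q-1)/(q^2-2)}\|\nabla\dot u\|_{L^2}^{q(q-2)/(q^2-2)}$, combined with the unweighted bound on $\int_0^T\|\sqrt\rho\dot u\|_{L^2}^2dt$.
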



\begin{remark}
It should be noted here that our Theorems \ref{T1} and \ref{T2} hold for any function $\mu(\rho)$ satisfying \eqref{vis} and for arbitrarily large initial density with vacuum (even has large area vacuum) without any smallness assumption only on the initial data, which is in sharp contrast to Zhang \cite{Z2008} and Liu\cite{Liu2019} where they need the initial density strictly away from vacuum, to  Abidi-Zhang \cite{zhang5} and Huang-Wang \cite{HW2014} where they need the smallness assumptions on the initial data.
\end{remark}

\begin{remark}
	Our results hold for constant viscosity  $\mu(\rho)=\mu$, thereby generalizes the earlier results of L\"u-Shi-Zhong\cite{Lv2018} where they consider the constant viscosity and require $\eta_0=1$ in \eqref{xct}.
\end{remark}

\begin{remark}
For the convenience of subsequent calculations, we consider the condition $\mu'(\rho)\in L^\infty(0,\bar\rho)$, which can be easily replaced by $\nabla\mu(\rho)\in L^q$ $(q>2)$ (see \cite{He2021,HW2014} for details). Under this more general assumption, we can choose $\mu(\rho)=1+\rho^\beta$ with $\beta\geq 0$, which has a broader range than the compressible case where the global large solution is obtained only when $\beta> 4/3$ (see \cite{Fan2022, Hua2016, Hua2022}).
\end{remark}
\begin{remark}
It should be mentioned here that the term $\nabla^\bot\big(\mu(\rho)\omega\big)$ is the $\div$-free part of the general diffusive term $\div\big(2\mu(\rho)\mathcal{D}(u)\big)$.
In fact, since $\div u=0$, then
\begin{align*}
	\div\big(2\mu(\rho)\mathcal{D} (u)\big) = \div\big(2\mu(\rho)\mathcal{A}(u)\big) + 2\div\big(  \mu(\rho) (\nabla u)^T\big) = \nabla^\bot\big(\mu(\rho)\omega\big) + 2 \nabla u \cdot \nabla \mu(\rho),
\end{align*}
where $\mathcal{A}(u)=\frac12\big[\nabla u-(\nabla u)^T\big]$ is the antisymmetric part of the deformation tensor.
In this paper, we only consider the div-free part of the diffusion term without dealing with the interaction of the gradient of density and the velocity. It would be interesting to study the global existence of equations \eqref{rNS} with vacuum, which is left for future.
\end{remark}

We now comment on the analysis of this paper. In fact, the local existence and uniqueness of strong solutions to the Cauchy problem \eqref{NS}-\eqref{ydxw} are guaranteed by Lemma \ref{lem21}. Thus, to extend the strong solutions globally in time and allow the density to vanish initially, one needs global a priori estimates, which are independent of the lower bound of the initial density, of smooth solutions to \eqref{NS}-\eqref{ydxw} in suitable higher norms. Motivated by Huang-Wang \cite{HW2014} recent studies on the blow-up criteria of strong solutions, it turns out that the key issue in this paper is to derive the upper bound of the $\|\nabla\rho\|_{L^q}(q>2)$, which is independent of the lower one of the initial density.


In the case of $\rho_\infty=0$, in order to establish the estimate of $\sup_{0\le t\le T}\|\nabla\rho\|_{L^q}$, the main difficulty is to obtain Lemmas \ref{lem3.3}-\ref{lem3.4}. On the one hand, motivated by L\"u-Shi-Zhong \cite{Lv2018}, we use the material derivatives of the velocity $\dot u$ instead of $u_t$ to avoid strong nonlinearity brought by the convective term  $\div(\rho u\otimes u)$. On the other hand, because the dissipative term in \eqref{NS}$_2$ involves density, estimates of $\nabla^2 u$ and $\nabla P$ generally depend on derivatives of $\rho$. To overcome this problem, we observe a special structure \eqref{a39}, which allows us to derive the substitute estimate
$$
\|\nabla\big(\mu(\rho)\o\big)\|_{L^2}+\|\nabla P\|_{L^2}\leq C\|\sqrt\rho\dot{u}\|_{L^2}.
$$
This inequality plays an important role in completing the proofs of Lemmas \ref{lem3.3}-\ref{lem3.4}.
Afterwards, we derive a spatially weighted mean estimate for the density.
Then, following the arguments in \cite{Fan2022,Hua2016} for the compressible Navier-Stokes equations and applying a Beale–Kato–Majda type inequality, we find that
$$
\begin{aligned}
\|\nabla u\|_{L^\infty}&\leq C\|\o\|_{L^\infty}\log(e+\|\na^2 u\|_{L^q})+C\|\na u\|_{L^2 } +C \\
&\leq C\|\mu(\rho)\o\|_{L^2}^{\frac{q-2}{2(q-1)}}\|\nabla\big(\mu(\rho)\o\big)\|_{L^q}^{\frac{q}{2(q-1)}}\log(e+\|\rho\dot u\|_{L^q}+\|\nabla\rho\|_{L^q}+\|\nabla u\|_{L^2})+C\\
&\leq C\|\rho\dot u\|_{L^q}^{\frac{q}{2(q-1)}}\log(e+\|\rho\dot u\|_{L^q}+\|\nabla\rho\|_{L^q})+C,
\end{aligned}
$$
the $\|\nabla\rho\|_{L^q}$ can be obtained via solving a logarithm Gr\"onwall inequality
 (see Proposition \ref{aupper}). This derivation also simultaneously provides a bound for $\|\nabla u\|_{L^1(0,T;L^\infty(\mathbb{R}^2))}$.

In the case of  $\rho_\infty>0$, the proof follows an argument similar to that in Section \ref{se13}. A key difference is that $\rho$ no longer possesses integrability, consequently, all estimates in Section \ref{se13} that rely on the integrability of $\rho$ must be adjusted. First, using the Gagliardo–Nirenberg inequality, we obtain the $\|u\|_{L^2}$ (see \eqref{vzv1}). Next, in the estimates of Lemma \ref{lem21.4}, the $\|P\|_{L^4}$ can no longer be used because estimate \eqref{l1} no longer holds. We therefore reformulate the term $J_3$ in Lemma \ref{lem3.4} as
\begin{equation}
	\begin{split}\nonumber
		\tilde{J}_3&=-\int\big(u_t^j \partial_{t}\partial_{j}P+u^k\partial_k u^j \partial_{t}\partial_{j}P +\dot{u}^j u^k\partial_k \partial_{j}P\big) dx\\
		&= -\frac{d}{dt}\int u^k\partial_k u^j \partial_{j}P dx+2\int \big(\dot u^k\partial_k u^j\partial_j P-u^i\partial_i u^k\partial_k u^j\partial_j P\big)dx\\
		&\quad+\int\big(\partial_{j}u^i\partial_i u^j u^k\partial_k P+\dot{u}^j \partial_{j}u^k\partial_k P\big)dx\\
		&\leq \frac{d}{dt}\int u^k\partial_k u^j \partial_{j}Pdx+C\|\nabla\dot{ u }\|_{L^2}\|\nabla u\|_{L^2}\|\nabla P\|_{L^2}+C\|u\|_{L^\infty}\|\nabla u\|_{L^4}^2\|\nabla P\|_{L^2},
	\end{split}
\end{equation}
which avoids dependence on $\|\rho\|_{L^\infty(0,T;L^2(\mathbb{R}^2))}$. Finally, similar to the proof of Proposition \ref{aupper}, we complete the proof of Proposition \ref{vvvt}.

The rest of the paper is organized as follows: In section \ref{sec2}, we collect some elementary facts and inequalities which will be used later. In Sections \ref{se13} and \ref{se321}, we derive the upper bound of the $\|\nabla\rho\|_{L^q}$, which is the key to extending the local solutions to all time, and some necessary lower and higher order estimates, when the constant states at far-field are vacuum ($\rho_\infty=0$) and nonvacuum ($\rho_\infty>0$), respectively. Finally, the main results Theorems \ref{T1} and \ref{T2} are proved in Section \ref{sec4}.
\section{Preliminaries}\label{sec2}
In this section, we shall enumerate some auxiliary lemmas.

To begin with, we recall the local existence result of strong solutions, whose proof for the case of $\rho_\infty=0$ was showed in \cite[Theorem 1.2]{LZ2015}, while the one for the case of $\rho_\infty>0$ can be derived in the similar standard method.
\begin{lemma}\label{lem21}
Assume that $(\rho_0, u _0)$ satisfies the conditions \eqref{ct} (if $\rho_\infty=0$) or \eqref{2ct}(if $\rho_\infty>0$). Then there exist a small time $T>0$ and a unique strong solution $(\rho\geq0, u, P)$ to the problem \eqref{NS}-\eqref{ydxw} in $\mathbb{R}^{2}\times(0,T)$ satisfying \eqref{zzx}-\eqref{dsjxw}(if $\rho_\infty=0$) or \eqref{2zzx}-\eqref{2dsjxw}(if $\rho_\infty>0$).
\end{lemma}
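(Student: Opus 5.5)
The plan is to argue by a standard linearization and contraction scheme, mimicking \cite[Theorem 1.2]{LZ2015}, since for $\rho_\infty=0$ the statement is exactly that result. I only need to adapt it to $\rho_\infty>0$, where the density no longer has integrability and the weights $\bar x^{a}$ are not needed. First I regularize the density: replace $\rho_0$ by $\rho_0^\delta=\rho_0+\delta$ if $\rho_\infty=0$, and by a mollified version bounded below if $\rho_\infty>0$, and smooth $u_0$ accordingly. With the density bounded away from zero, a strong solution of the regularized problem exists on a short interval by the classical theory for nonvacuum flows (Antontsev--Kazhikov--Monakhov type iteration).

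The iteration itself goes as follows. Given $v$ in a suitable ball, I solve the transport equation $\rho_t+v\cdot\nabla\rho=0$; this preserves $\|\rho\|_{L^\infty}$ (or $\|\rho-\rho_\infty\|_{L^r\cap L^\infty}$) and controls $\|\nabla\rho\|_{L^q}$ by $\exp\int_0^t\|\nabla v\|_{L^\infty}ds$. I then solve the linear momentum equation
\[
\rho u_t+\rho v\cdot\nabla u-\nabla^\perp(\mu(\rho)\omega)+\nabla P=0,\qquad \div u=0 .
\]
The stationary operator $u\mapsto-\nabla^\perp(\mu(\rho)\omega)+\nabla P$ with $\div u=0$ is elliptic: taking curl and div gives $-\Delta(\mu\omega)=\mathrm{curl}\,F$ and $\Delta P=\div F$. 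This yields $\|\nabla(\mu\omega)\|_{L^p}+\|\nabla P\|_{L^p}\le C\|F\|_{L^p}$ with $F=-\rho\dot u$, and hence $\|\nabla^2u\|_{L^p}\le C(\|\rho\dot u\|_{L^p}+\|\nabla\rho\|_{L^q}\|\nabla u\|_{L^{pq/(q-p)}})$.

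On a short time $T$, depending only on the norms in \eqref{ct} or \eqref{2ct}, I derive the uniform estimates listed in \eqref{zzx} or \eqref{2zzx}. These are the energy estimate, the $\|\nabla u\|_{L^2}$ estimate using $\sqrt\rho u_t$, and the $\sqrt t$-weighted estimates for $\sqrt\rho u_t$ and $\nabla u_t$, which avoid any compatibility condition. The $L^{(q+1)/q}(0,T;W^{1,q})$ bound follows by interpolation between $\sqrt t$-weighted and unweighted bounds. The map $v\mapsto u$ is then a contraction in a low norm such as $\|\sqrt\rho(u_1-u_2)\|_{L^2}$ together with $\|\rho_1-\rho_2\|_{L^2}$, using the Gronwall structure.

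The main obstacle is that all bounds must be independent of $\delta$, the lower bound of the density. In 2D the quantity $\|u\|_{L^p}$ cannot be controlled by $\|\sqrt\rho u\|_{L^2}$ and $\|\nabla u\|_{L^2}$ directly. For $\rho_\infty=0$ I handle this with the weighted inequality $\|\bar x^{-\eta}u\|_{L^{p}}\le C(\|\sqrt\rho u\|_{L^2}+\|\nabla u\|_{L^2})$, which is valid once $\int_{B_{N_1}}\rho\ge1/4$. That mass lower bound, which is \eqref{rjg}, is propagated for short time using the transport equation and the weight $\bar x^{a}$; the bound on $\rho\bar x^a$ in $L^1\cap H^1\cap W^{1,q}$ follows from the transport of the weight, since $|u\cdot\nabla\bar x^a|\le C\bar x^{a}|u|\bar x^{-1}$. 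For $\rho_\infty>0$ I instead use $\|u\|_{L^2}\le C(\|\sqrt\rho u\|_{L^2}+\|\rho-\rho_\infty\|_{L^r}^{\theta}\|\nabla u\|_{L^2}^{1-\theta}\cdots)$, via Gagliardo--Nirenberg applied to $(\rho_\infty-\rho)u$.

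To conclude, I let $\delta\to0$ by compactness (Aubin--Lions) to obtain the strong solution with vacuum. Uniqueness follows from the standard $L^2$-energy difference argument with weights. Finally, the decay bounds \eqref{dsjxw} and \eqref{2dsjxw} for $t\ge1$ are vacuous on a short interval $(0,T)$ with $T<1$, so they hold trivially after shrinking $T$.
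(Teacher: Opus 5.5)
Your proposal follows essentially the same route as the paper. The paper gives no argument of its own beyond invoking \cite[Theorem 1.2]{LZ2015} for $\rho_\infty=0$ and asserting that the case $\rho_\infty>0$ follows by the same standard approximation, uniform-estimate and compactness method, and your sketch is a reasonable unpacking of exactly that.

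One caveat you share with the paper: \cite{LZ2015} concerns constant viscosity, so for $\rho_\infty=0$ the lemma is not literally that result, and beyond Lemma \ref{stokes} the variable viscosity has to be handled in your difference and uniqueness estimates through the term $\int(\mu(\rho_1)-\mu(\rho_2))\,\omega_2\,(\omega_1-\omega_2)\,dx$, which, absent compatibility conditions, you close by pairing the $t^{1/2}$-smallness of (a weighted norm of) $\rho_1-\rho_2$ from the transport equation with $\sqrt t\,\nabla u_2\in L^2(0,T;W^{1,q})$.
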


In what follows, we will use the convention that $C$ denotes a generic positive constant, and use $C(\al)$ to emphasize that $C$ depends on $\al.$

Next, the following Gagliardo-Nirenberg inequality (see \cite{N1959}) will be used later.
\begin{lemma}[Gagliardo-Nirenberg]\label{lem22}
For $p\in[2,\infty), s\in(1,\infty)$, and $r\in(2,\infty)$, there exists some generic constant $C>0$
such that for $f\in H^{1}(\mathbb{R}^2)$ and $g\in L^{s}(\mathbb{R}^2)\cap D^{1,r}(\mathbb{R}^2)$, we have
\begin{align}
&\|f\|_{L^p(\mathbb{R}^2)}^{p}\leq C(p)\|f\|_{L^2(\mathbb{R}^2)}^{2}\|\nabla f\|_{L^2(\mathbb{R}^2)}^{p-2}, \\
&\|g\|_{C(\overline{\mathbb{R}^2})}\leq C(s, r)\|g\|_{L^s(\mathbb{R}^2)}^{s(r-2)/(2r+s(r-2))}\|\nabla g\|_{L^r(\mathbb{R}^2)}^{2r/(2r+s(r-2))}.
\end{align}
\end{lemma}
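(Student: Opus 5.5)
The two inequalities are scale-invariant in $\mathbb{R}^2$, so I would derive both from standard Sobolev embeddings and interpolation, and then fix the exponents by a dilation argument.

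\textbf{First inequality.} I start from the Ladyzhenskaya-type bound $\|f\|_{L^p}\le C(p)\|f\|_{L^2}^{2/p}\|\nabla f\|_{L^2}^{1-2/p}$ for $f\in H^1(\mathbb{R}^2)$, $2\le p<\infty$.
\begin{itemize}
\item For $p=2$ it is trivial.
\item For $p>2$, apply the Gagliardo--Nirenberg--Sobolev inequality $\|g\|_{L^2}\le C\|\nabla g\|_{L^1}$ in $\mathbb{R}^2$ to $g=|f|^{p/2}$, first for $f\in C_0^\infty$.
\item Hölder gives $\|\nabla |f|^{p/2}\|_{L^1}\le \tfrac p2\|f\|_{L^{p-2}}^{(p-2)/2}\|\nabla f\|_{L^2}$, hence $\|f\|_{L^p}^p\le C\|f\|_{L^{p-2}}^{p-2}\|\nabla f\|_{L^2}^2$.
\item Induct on $p$ in steps of $2$ starting from $p\in[2,4)$, and use interpolation between $L^2$ and $L^p$ for the intermediate exponents.
\item A cleaner route: apply the $W^{1,1}\hookrightarrow L^2$ bound to $|f|^{p/2}$ and interpolate $\|f\|_{L^{p-2}}$ between $L^2$ and $L^p$. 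This gives a closed inequality in $\|f\|_{L^p}$ that can be absorbed.
\end{itemize}
The exponents are forced by scaling. Under $f_\lambda(x)=f(\lambda x)$ one has $\|f_\lambda\|_{L^p}^p=\lambda^{-2}\|f\|_{L^p}^p$, $\|f_\lambda\|_{L^2}^2=\lambda^{-2}\|f\|_{L^2}^2$, and $\|\nabla f_\lambda\|_{L^2}$ is invariant, which matches. Density of $C_0^\infty$ in $H^1$ extends the bound to all of $H^1$.

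\textbf{Second inequality.} The plan is a local Morrey estimate combined with optimization over the ball radius.
\begin{itemize}
\item For $g\in L^s\cap D^{1,r}$ with $r>2$, Morrey's inequality on a ball $B_R(x)$ gives
\[
|g(x)-\bar g_{B_R(x)}|\le C R^{1-2/r}\|\nabla g\|_{L^r}.
\]
\item The average is controlled by Hölder: $|\bar g_{B_R(x)}|\le CR^{-2/s}\|g\|_{L^s}$.
\item Hence $|g(x)|\le C\big(R^{-2/s}\|g\|_{L^s}+R^{1-2/r}\|\nabla g\|_{L^r}\big)$ for every $R>0$.
\item Choose $R$ to balance the two terms, i.e. $R^{1-2/r+2/s}=\|g\|_{L^s}/\|\nabla g\|_{L^r}$.
\item This gives $\|g\|_{L^\infty}\le C\|g\|_{L^s}^{\theta}\|\nabla g\|_{L^r}^{1-\theta}$ with
\[
\theta=\frac{1-2/r}{1-2/r+2/s}=\frac{s(r-2)}{s(r-2)+2r},
\]
which is exactly the stated exponent.
\end{itemize}
Continuity of $g$ (the $C(\overline{\mathbb{R}^2})$ claim) follows from the Hölder continuity given by Morrey. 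Decay at infinity follows by applying the same local estimate to $g$ on balls far away: there $\|g\|_{L^s(B_R(x))}\to0$, since $g\in L^s$, and $\|\nabla g\|_{L^r(B_R(x))}\to0$, since $\nabla g\in L^r$.

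\textbf{Main obstacle.} The only delicate point is in the second inequality: $g$ lies only in $D^{1,r}$, so we cannot assume a priori that it is in a standard Sobolev space. I handle this by using the local Morrey estimate, which needs only $\nabla g\in L^r_{loc}$ and $g\in L^1_{loc}$, so no global density argument is required.
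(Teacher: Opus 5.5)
The paper gives no proof of this lemma; it simply quotes it from Nirenberg's 1959 paper, where it is a special case of the general Gagliardo--Nirenberg interpolation inequality. Your argument is a correct, self-contained alternative.

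For the first inequality you use Ladyzhenskaya's device: apply $\|h\|_{L^2}\le C\|\nabla h\|_{L^1}$ to $h=|f|^{p/2}$, then interpolate. For the second you combine a localized Morrey estimate with the averaging bound $|\bar g_{B_R(x)}|\le CR^{-2/s}\|g\|_{L^s}$ and optimize in $R$. The resulting exponent $s(r-2)/(2r+s(r-2))$ is exactly the stated one. Citing Nirenberg is of course shorter. Your route is elementary and works directly in the class $L^s\cap D^{1,r}$, where $g$ is only $L^1_{\rm loc}$ with $\nabla g\in L^r$, with no global density argument. It also shows that $g$ is continuous and tends to $0$ at infinity, which is what gives meaning to the norm $\|g\|_{C(\overline{\mathbb{R}^2})}$.

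Two small points should be tidied up.

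First, your ``cleaner route'' of interpolating $\|f\|_{L^{p-2}}$ between $L^2$ and $L^p$ only makes sense for $p\ge 4$. For $2<p<4$ the exponent $p-2$ lies below $2$, and $\|f\|_{L^{p-2}}$ is not controlled by the $H^1$ norm at all. So the range $2<p<4$ must come from the case $p=4$, by interpolating between $L^2$ and $L^4$; your induction-plus-interpolation bullet effectively does this. The step $p-2\mapsto p$, i.e. $\|f\|_{L^p}^p\le C\|f\|_{L^{p-2}}^{p-2}\|\nabla f\|_{L^2}^2$, is then only used for $p\ge 4$.

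Second, in the Morrey argument the balancing choice of $R$ presupposes $\nabla g\not\equiv 0$. The degenerate case is handled by letting $R\to\infty$ in $|g(x)|\le C\bigl(R^{-2/s}\|g\|_{L^s}+R^{1-2/r}\|\nabla g\|_{L^r}\bigr)$: then $g$ is a constant lying in $L^s$, hence zero.
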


By using equation $\Delta u=\nabla \div u+\nabla^\perp\nabla^\perp\cdot u$, we can easily obtain the following estimates.
\begin{lemma}\label{lem33}
For $1<p<\infty,$ there is a
constant  $C$ such that  the following estimate holds for all $\na u\in  L^p(\mathbb{R}^2)$ and $\na^2 v\in  L^p(\mathbb{R}^2)$,
\begin{align}\label{dumm}
\|\nabla u\|_{L^p(\mathbb{R}^2)} &\leq C(p)\left(\|\operatorname{div}u\|_{L^p(\mathbb{R}^2)}+\|\nabla^\perp\cdot u\|_{L^p(\mathbb{R}^2)}\right),\\
\|\nabla^2 v\|_{L^p(\mathbb{R}^2)} &\leq C(p)\left(\|\nabla\operatorname{div}v\|_{L^p(\mathbb{R}^2)}+\|\nabla^\perp\nabla^\perp\cdot v\|_{L^p(\mathbb{R}^2)}\right).\label{2dumm}
\end{align}
\end{lemma}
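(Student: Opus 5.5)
The plan is to reduce both estimates to the $L^p$ boundedness of Riesz transforms (Calderón–Zygmund theory) via the Fourier-side identity behind the vector identity $\Delta u=\nabla\div u+\nabla^\perp\nabla^\perp\cdot u$. With the paper's convention $\nabla^\perp f=(\partial_2 f,-\partial_1 f)$ and $\nabla^\perp\cdot u=\partial_2u^1-\partial_1u^2$, a direct computation gives
\begin{equation*}
\nabla\div u+\nabla^\perp(\nabla^\perp\cdot u)=\big(\partial_1^2u^1+\partial_2^2u^1,\ \partial_1^2u^2+\partial_2^2u^2\big)=\Delta u .
\end{equation*}
This is the first thing I would verify componentwise. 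The signs from the perpendicular gradient cancel in pairs, e.g. $\partial_1\partial_2u^2$ from $\nabla\div u$ against $-\partial_2\partial_1u^2$ from $\nabla^\perp\nabla^\perp\cdot u$.

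\textbf{Estimate \eqref{dumm}.} Set $d=\div u$ and $w=\nabla^\perp\cdot u$, both in $L^p$. Formally $\partial_k u=\partial_k\Delta^{-1}(\nabla d+\nabla^\perp w)$, so every entry of $\nabla u$ is a finite sum of terms $\pm\partial_k\partial_l(-\Delta)^{-1}$ applied to $d$ or $w$. These operators equal $R_kR_l$, the product of Riesz transforms, whose symbol $\xi_k\xi_l/|\xi|^2$ is bounded and homogeneous of degree zero. By Calderón–Zygmund theory they are bounded on $L^p$ for $1<p<\infty$, which gives
\begin{equation*}
\|\nabla u\|_{L^p}\le C(p)\big(\|d\|_{L^p}+\|w\|_{L^p}\big).
\end{equation*}

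\textbf{Estimate \eqref{2dumm}.} This follows by applying the same argument to $\partial_m v$ for each $m$. Note that $\div\partial_m v=\partial_m\div v$ and $\nabla^\perp\cdot\partial_m v=\partial_m\nabla^\perp\cdot v$, and $\nabla(\partial_m v)\in L^p$. So \eqref{dumm} applied to $\partial_m v$ gives
\begin{equation*}
\|\nabla\partial_m v\|_{L^p}\le C(p)\big(\|\nabla\div v\|_{L^p}+\|\nabla^\perp\nabla^\perp\cdot v\|_{L^p}\big).
\end{equation*}
Summing over $m$ yields \eqref{2dumm}. Here $\|\nabla^\perp f\|_{L^p}=\|\nabla f\|_{L^p}$, so the two forms of the right-hand side agree.

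\textbf{Main obstacle.} The only delicate point is justifying the inversion $\nabla u=R R(\cdot)$ when $u$ itself is only known to satisfy $\nabla u\in L^p$, with no integrability of $u$. I would handle this by working with $\nabla u$ directly, modulo constants. Componentwise,
\begin{equation*}
\partial_k\partial_l u^i=\pm\partial_k\partial_l\text{-derivatives of } d,w,
\end{equation*}
so $\nabla u-\sum R R(d,w)$ is a vector of $L^p$ tempered distributions with vanishing Laplacian. Such a distribution is a harmonic polynomial lying in $L^p$, hence zero. Alternatively, one proves the estimate for $u\in C_0^\infty$ and extends by density of smooth compactly supported functions in the homogeneous space $\{u:\nabla u\in L^p\}$ modulo constants. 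Either route is standard, and I expect no real difficulty beyond this bookkeeping.
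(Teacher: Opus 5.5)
Your proposal is correct and takes the same route the paper indicates. The paper gives no detailed proof: it only cites the identity $\Delta u=\nabla\div u+\nabla^\perp\nabla^\perp\cdot u$ together with standard Calder\'on--Zygmund (Riesz transform) $L^p$ theory. You carry this out explicitly, obtain the second estimate by applying the first to $\partial_m v$, and use the Liouville argument to justify the inversion when only $\nabla u\in L^p$ is known, a detail the paper leaves implicit.
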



The following weighted $L^m$ bounds for elements of the Hilbert space $\tilde{D}^{1,2}(\O)\triangleq\{ v \in H_{\loc}^{1}(\mathbb{R}^2)|\nabla v \in L^{2}(\mathbb{R}^2)\}$ can be found in \cite[Theorem B.1]{L1996}.
\begin{lemma} \la{1leo}
For $m\in [2,\infty)$ and $\theta\in (1+m/2,\infty),$ there exists a positive constant $C$ such that for all $v\in  \tilde{D}^{1,2}(\O),$ \be\la{3h} \left(\int_{\O} \frac{|v|^m}{e+|x|^2}\left(\log \left(e+|x|^2\right)\right)^{-\theta}dx  \right)^{1/m}\le C(m, \theta)\|v\|_{L^2(B_1)}+C(m, \theta)\|\na v\|_{L^2(\O) }.\ee
\end{lemma}



A useful consequence of Lemma \ref{1leo} is the following weighted  bounds for elements of  $\ti D^{1,2}(\O) $ which   is important for our analysis.

\begin{lemma}[\cite{Hua2022}] \la{lemma2.6} Let  $\bar x$ and $\eta_0$ be as in \eqref{xct}. For  $\beta>1,$ assume that $\n \in L^1(\O)\cap L^\infty(\O)$ is a non-negative function such that
\be \la{2.12}   \int_{B_{N_1} }\n dx\ge M_1,  \quad \int \n^\beta dx\le M_2, \quad  \int \n \bar x^\alpha dx\le M_3,\ee
for positive constants $   M_i (i=1,\cdots,3),\alpha ,$ and $ N_1\ge 1.$  Then, for any $r\in (1,\infty)$, there is a positive constant $C$ depending only on   $ M_i (i=1,\cdots,3), N_1,\alpha, \beta, \eta_0$ and $r$ such that every $v\in \ti D^{1,2}(\O)  $ satisfies
 \be\la{2.16}\|\n v\|_{L^r}   \le C \left(  \|\n^{1/2} v\|_{L^2(B_{N_1})}
 + \|\na  v\|_{L^2}\right).\ee
\end{lemma}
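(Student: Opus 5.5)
We prove \eqref{2.16} by splitting $\|\n v\|_{L^r}$ into a contribution from the mean of $v$ over a ball and a contribution from the oscillation $v-\bar v$, controlling the latter with the weighted Hardy-type bound of Lemma \ref{1leo}. Set $\bar v\triangleq\frac{1}{|B_{N_1}|}\int_{B_{N_1}}v\,dx$. We first bound $|\bar v|$ by the right-hand side of \eqref{2.16}. Using the lower mass bound in \eqref{2.12},
\begin{equation*}
M_1|\bar v|\le\int_{B_{N_1}}\n|\bar v|\,dx\le\int_{B_{N_1}}\n|v|\,dx+\int_{B_{N_1}}\n|v-\bar v|\,dx .
\end{equation*}
For the first term we have $\int_{B_{N_1}}\n|v|\,dx\le\big(\int_{B_{N_1}}\n\,dx\big)^{1/2}\|\n^{1/2}v\|_{L^2(B_{N_1})}$, and $\int\n\,dx\le C(M_2,M_3)$ by interpolating between $\int\n^\beta dx\le M_2$ and $\int\n\bar x^\alpha dx\le M_3$ (split into $|x|\le1$ and $|x|>1$). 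For the second term, Hölder with exponents $\beta$ and $\beta'$, followed by Poincaré on $B_{N_1}$ and Sobolev, gives $\int_{B_{N_1}}\n|v-\bar v|\,dx\le M_2^{1/\beta}\|v-\bar v\|_{L^{\beta'}(B_{N_1})}\le C(N_1)\|\na v\|_{L^2}$. Hence $|\bar v|\le C\big(\|\n^{1/2}v\|_{L^2(B_{N_1})}+\|\na v\|_{L^2}\big)$.

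Next we apply Lemma \ref{1leo} to $w=v-\bar v\in\ti D^{1,2}$ (with $N_1$ in place of $1$, or after rescaling). Poincaré gives $\|w\|_{L^2(B_{N_1})}\le C\|\na v\|_{L^2}$, so for any $m\ge2$ and $\theta>1+m/2$,
\begin{equation*}
\int\frac{|w|^m}{e+|x|^2}\log^{-\theta}(e+|x|^2)\,dx\le C\|\na v\|_{L^2}^m .
\end{equation*}

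Now we write $\n w=\n\,\omega^{1/m}\cdot\big(\omega^{-1/m}\big)w$ with $\omega=(e+|x|^2)^{-1}\log^{-\theta}(e+|x|^2)$, so $\omega^{-1}\sim\bar x^{2}\log^{\theta-2-2\eta_0}$-type. By Hölder,
\begin{equation*}
\|\n w\|_{L^r}\le\big\|\n\,\omega^{-1/m}\big\|_{L^{s}}\Big(\int\omega|w|^m\,dx\Big)^{1/m},\qquad \frac1r=\frac1s+\frac1m .
\end{equation*}
It then suffices to show that $\int\n^s\omega^{-s/m}\,dx$ is bounded. Since $\omega^{-s/m}\lesssim(e+|x|^2)^{s/m}\log^{\theta s/m}(e+|x|^2)\lesssim\bar x^{2s/m}\log^{C}$, we choose $m$ large so that $2s/m<\alpha$. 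We then use $\n^s\le\n\cdot\n^{s-1}$, bounding $\n^{s-1}$ via $\n\in L^\infty$ (which is allowed since $\n\in L^\infty$ is assumed; alternatively use $\n^\beta$ and interpolate), so that $\int\n^s\omega^{-s/m}\le\|\n\|_{L^\infty}^{s-1}\int\n\bar x^\alpha\,dx$. The logarithmic factors are absorbed by $\bar x^{\alpha-2s/m}$, and $s\to r$ as $m\to\infty$. The only delicate choice here is ensuring $\theta>1+m/2$ is compatible with the logarithmic absorption. This is fine because the polynomial gap $\alpha-2s/m>0$ dominates any power of the logarithm. One caveat: the constant would then also depend on $\|\n\|_{L^\infty}$; to avoid that, use $\int\n^s\omega^{-s/m}\le\big(\int\n^{\beta}\big)^{\cdot}\big(\int\n\bar x^\alpha\big)^{\cdot}$ via Hölder interpolation for $s$ close to $1$, and reach general $r$ similarly.

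Finally, $\|\n\bar v\|_{L^r}=|\bar v|\,\|\n\|_{L^r}$, where $\|\n\|_{L^r}$ is controlled by $\|\n\|_{L^1\cap L^\infty}$ (or by $M_2$ and $M_3$). Combining this with the bound on $|\bar v|$ and the weighted estimate for $\n w$ yields \eqref{2.16}. The main obstacle is the weighted Hölder step, namely matching the exponents so that $\bar x^\alpha$ with $\alpha>0$ absorbs the weight coming from Lemma \ref{1leo}.
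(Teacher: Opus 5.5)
Your main argument is correct, but it is organized differently from the paper's proof.

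\textbf{The paper's route.} The paper never subtracts a mean. It proceeds in three steps.
\begin{itemize}
\item It first proves the weighted bound \eqref{2.h10}, $\|v\bar x^{-\delta}\|_{L^{(2+\varepsilon)/\delta}}\le C(\|\nabla v\|_{L^2}+\|v\|_{L^2(B_1)})$. This comes from applying the 2D Sobolev embedding to $\nabla(v\bar x^{-\delta})\in L^{2(2+\varepsilon)/(2+\varepsilon+2\delta)}$ and controlling the commutator term $v\bar x^{-\delta-1}\nabla\bar x$ with Lemma \ref{1leo} at $m=2$.
\item It then replaces $\|v\|_{L^2(B_1)}$ by $\|\rho^{1/2}v\|_{L^2(B_{N_1})}+\|\nabla v\|_{L^2}$ using Feireisl's Poincar\'e-type inequality \eqref{3.12}.
\item It concludes with one three-factor H\"older inequality, $\|\rho v\|_{L^r}\le\|(\rho\bar x^\alpha)^\sigma\|_{L^{1/\sigma}}\|v\bar x^{-\alpha\sigma}\|_{L^{(2+\varepsilon)/(\alpha\sigma)}}\|\rho\|_{L^\infty}^{1-\sigma}$.
\end{itemize}

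\textbf{Your route.} You split $v=\bar v+w$ and prove the control of $|\bar v|$ by hand; this is essentially a proof of \eqref{3.12}. You then apply Lemma \ref{1leo} directly to $w$, with $m$ large and $\theta>1+m/2$, so no Sobolev embedding of a weighted function is needed. The price is the separate term $|\bar v|\,\|\rho\|_{L^r}$.

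\textbf{What each buys.} Your version is somewhat more elementary and self-contained. The paper's version produces the intermediate estimate \eqref{2.h10} as a by-product, and it reuses it later in \eqref{a4.21} and \eqref{5.d1} for $u\bar x^{-\delta}$ and $\rho^\eta v$. Two small simplifications in your argument: $\int\rho\,dx\le M_3$ holds directly because $\bar x\ge\sqrt e>1$, and no rescaling is needed in Lemma \ref{1leo} since $B_1\subset B_{N_1}$.

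\textbf{Correction to your caveat.} Both your main argument and the paper's give a constant that also depends on $\|\rho\|_{L^\infty}$. In yours this enters through $\|\rho\|_{L^\infty}^{s-1}$ and $\|\rho\|_{L^r}$; in the paper's, through $\|\rho\|_{L^\infty}^{1-\sigma}$. For $r>\beta$ this dependence cannot be removed. Take $v\equiv1\in\tilde D^{1,2}(\mathbb{R}^2)$: then \eqref{2.16} would give $\|\rho\|_{L^r}\le C\big(\int_{B_{N_1}}\rho\,dx\big)^{1/2}\le C(N_1,\beta,M_2)$. This fails for $\rho=\rho_b+\lambda\chi_{B_\epsilon}$ with $\lambda^\beta\epsilon^2=1$ and $\lambda\to\infty$, which keeps \eqref{2.12} uniformly satisfied while $\|\rho\|_{L^r}^r\ge\pi\lambda^{r-\beta}\to\infty$.

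Consequently:
\begin{itemize}
\item Your interpolation against $\int\rho^\beta\,dx$ works only for $r<\beta$, where you can pick $s\in(r,\beta)$ close to $r$.
\item Your remark that $\|\rho\|_{L^r}$ is controlled by $M_2,M_3$ holds only for $r\le\beta$.
\end{itemize}
You should drop the claim that general $r$ can be reached ``similarly'' and keep the $L^\infty$ dependence. This is harmless in the paper's application, since $\|\rho\|_{L^\infty}\le\|\rho_0\|_{L^\infty}$.
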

\begin{proof}
First, for all $ \de\in (0,1),  \varepsilon\in(0,1)$, by direct calculations and combining with \eqref{3h} we obtain
\be\la{2.h10}\ba
&\|   v\bar x^{-\de} \|_{L^{(2+\varepsilon)/\de}}\\
&\leq C\|\na (v\bar x^{-\de})\|_{L^{2(2+\varepsilon)/(2+\varepsilon+2\delta)}}\\
&=C\|\bar   x^{-\de}\na v -\de  v\bar x^{-\de-1}\na\bar x \|_{L^{2(2+\varepsilon)/(2+\varepsilon+2\delta)}}\\
&\le C\|\na v\|_{L^2}\|\bar   x^{-\de}\|_{L^{(2+\varepsilon)/\de}}+C\| v\bar x^{-1}\log^{1+\eta_0}(e+|x|^2)\|_{L^2}\|\bar   x^{-\de}\|_{L^{(2+\varepsilon)/\de}}\\
&\le C \left(\|\na v\|_{L^2}+ \| v\|_{L^2(B_1)}\right).
\ea\ee

Next, it follows from   \eqref{2.12} and  the Poincar\'e type inequality     \cite[Lemma 3.2]{Fe}  that  there exists a positive constant $C $ depending only on $  M_1, M_2, N_1 ,$ and $\ga,$  such that  \be \la{3.12} \|v\|_{H^1(B_{ N_1} )}^2\le C \int_{B_{ N_1} }\n v^2dx +C \|\na v\|_{L^2(B_{ N_1} )}^2.\ee
This combined with  H\"older's inequality and \eqref{2.h10} yields that for  $r\in (1,\infty) $  and $\si=(2+\varepsilon)/(r(2+\varepsilon+\alpha)),$
 \bnn\ba \|\n v\|_{L^r}  &\le   \|(\n \bar x^\alpha)^\si\|_{L^{1/\si}} \| v\bar x^{-\alpha\si}\|_{L^{(2+\varepsilon)/(\alpha\si)}} \|\n\|_{L^\infty}^{1-\si}\\ &\le C  \left(  \|\n^{1/2} v\|_{L^2(B_{N_1})}
 + \|\na  v\|_{L^2}\right),\ea\enn
 which shows \eqref{2.16} and  finishes the proof of Lemma \ref{lemma2.6}.
\end{proof}

Next, let $\mathcal{H}^{1}(\mathbb{R}^2)$ and BMO$(\mathbb{R}^2)$ stand for the usual Hardy and BMO spaces (see \cite{S1993, CLMS1993}). Then the following well-known facts play a key role in the proof of Lemmas \ref{lem3.3}-\ref{lem3.4} in the next section.
\begin{lemma}\label{lem26}
(a) There is a positive constant $C$ such that
\begin{align}\label{mnxz}
\|E\cdot B\|_{\mathcal{H}^{1}(\O)}\leq C\|E\|_{L^{2}(\O)}\|B\|_{L^{2}(\O)},
\end{align}
for all $E\in L^{2}(\mathbb{R}^2)$ and $B\in L^{2}(\mathbb{R}^2)$ satisfying
\begin{equation*}
\div E=0,\ \nabla^{\bot}\cdot B=0\ \ \text{in}\ \ \mathcal{D}'(\mathbb{R}^2).
\end{equation*}
(b) There is a positive constant $C$ such that
\begin{equation}\label{lem1}
\| v \|_{{\rm BMO}(\O)}\leq C\|\nabla v \|_{L^{2}(\O)},
\end{equation} for all $  v \in \ti D^{1,2}(\mathbb{R}^2)$.
\end{lemma}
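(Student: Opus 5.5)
Lemma \ref{lem26} collects two classical facts from harmonic analysis; I would prove part (a) by the compensated-compactness argument of Coifman--Lions--Meyer--Semmes and part (b) by the Poincar\'e inequality on balls.

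For part (a), the plan is to exhibit $E\cdot B$ as a sum of Jacobian-type expressions. First, using $\nabla^\perp\cdot B=0$ in $\mathbb{R}^2$, write $B=\nabla\varphi$ with $\nabla\varphi\in L^2$, i.e. $\varphi\in \tilde D^{1,2}(\mathbb{R}^2)$; this is done via Fourier multipliers, taking $\varphi=-(-\Delta)^{-1}\div B$, which is well defined up to constants. Using $\div E=0$, I then note
$$E\cdot B=\div(\varphi E).$$
The key step is the grand maximal function estimate. For a nonnegative test function $\phi\in C_0^\infty(B_1)$ with $\int\phi=1$, set $\phi_t=t^{-2}\phi(\cdot/t)$. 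Integration by parts gives
$$(E\cdot B)*\phi_t(x)=-\int \big(\varphi(y)-\varphi_{B(x,t)}\big)E(y)\cdot\nabla_y\phi_t(x-y)\,dy,$$
where subtracting the average is allowed because $\int E\cdot\nabla\phi_t=0$ by $\div E=0$. Next I apply H\"older with exponents $\alpha$ on $E$ and $\alpha'$ on the $\varphi$-factor, choosing $\alpha<2$ and $\alpha'$ such that Sobolev--Poincar\'e holds with $\nabla\varphi\in L^{\beta}$ and $1/\beta=1/\alpha'+1/2$. For instance $\alpha=\beta=4/3$, $\alpha'=4$. This yields
$$|(E\cdot B)*\phi_t(x)|\le C\,\big(M|E|^{\alpha}\big)^{1/\alpha}(x)\,\big(M|B|^{\beta}\big)^{1/\beta}(x),$$
where $M$ is the Hardy--Littlewood maximal function. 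The $L^1$ norm of the supremum over $t$ is then bounded by Cauchy--Schwarz together with the $L^{2/\alpha}$-boundedness of $M$ (valid since $2/\alpha>1$ and $2/\beta>1$). This gives $C\|E\|_{L^2}\|B\|_{L^2}$. By the maximal characterization of $\mathcal{H}^1$ (Fefferman--Stein), this is exactly \eqref{mnxz}.

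For part (b), for any ball $B_R(x_0)$ the Poincar\'e inequality in 2D gives
$$\frac{1}{|B_R|}\int_{B_R}|v-v_{B_R}|\,dx\le CR^{-2}\cdot R\,|B_R|^{1/2}\|\nabla v\|_{L^2(B_R)}\le C\|\nabla v\|_{L^2},$$
and the scale invariance in 2D makes the constant uniform in $R$. Taking the supremum over all balls yields \eqref{lem1}.

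The main obstacle is part (a), specifically the pointwise maximal bound: one must choose exponents strictly below $2$ so that the maximal operator is bounded, and one must justify the potential representation $B=\nabla\varphi$ with only $L^2$ integrability. I would handle both by density, proving the estimate for smooth compactly supported fields and passing to the limit.
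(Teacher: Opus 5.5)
Your proposal is correct. The paper gives no proof of this lemma; it cites Coifman--Lions--Meyer--Semmes and Stein. Your argument for (a) is exactly the CLMS proof: write $B=\nabla\varphi$, subtract the ball average using $\div E=0$, apply Sobolev--Poincar\'e with $\alpha=\beta=4/3$ (so that $1/\alpha+1/\beta=1+1/2$), and use the maximal characterization of $\mathcal{H}^1$. Part (b) is the standard Poincar\'e-on-balls argument.

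The density step is also fine. Gradients and perpendicular gradients of $C_0^\infty$ functions are dense in the curl-free and divergence-free subspaces of $L^2(\mathbb{R}^2)$, respectively, so the bilinear bound extends to all admissible $E$ and $B$ by continuity.
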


Then, we state the following Beale-Kato-Majda-type inequality
which was proved in \cite{B1,kato} when $\div u\equiv 0$ and will be used
later to estimate $\|\nabla u\|_{L^\infty}$ and
$\|\nabla\rho\|_{L^q}$.
\begin{lemma}
\la{le9}  For $2<q<\infty,$ there is a
constant  $C$ such that  the following estimate holds for all
$\na u\in  L^2(\O)\cap W^{1,q} (\O),$
$$
\|\na u\|_{L^\infty
}\le C(q)\left(\|{\rm div}u\|_{L^\infty }+ \|\nabla^\perp\cdot u\|_{L^\infty }
\right)\log(e+\|\na^2 u\|_{L^q })+C(q)\|\na u\|_{L^2 } +C(q).
$$
\end{lemma}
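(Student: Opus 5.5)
The natural route is a logarithmic interpolation (Beale–Kato–Majda style). First reduce the full gradient to its divergence and curl; second, split the curl into a smooth part, which costs a logarithm of a higher norm, and a rough part, which is small.

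\emph{Step 1: reduction.} By Lemma \ref{lem33}, $\nabla u = \mathcal{R}(\operatorname{div}u,\nabla^\perp\cdot u)$, where $\mathcal{R}$ is a matrix of Riesz-transform compositions (zeroth-order Calderón–Zygmund operators). These are bounded on $L^p$ for $1<p<\infty$ but not on $L^\infty$, which is why a logarithm appears. Write $f\triangleq(\operatorname{div}u,\nabla^\perp\cdot u)$, so that $\nabla u=Kf$ with $K$ a Calderón–Zygmund kernel homogeneous of degree $-2$ and of mean zero on circles.

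\emph{Step 2: kernel splitting.} Fix $x$ and radii $0<\delta\le 1\le R$. Split
\[
\nabla u(x)=\Big(\int_{|y|<\delta}+\int_{\delta\le|y|\le 1}+\int_{|y|>1}\Big)K(y)f(x-y)\,dy .
\]
The three pieces are handled as follows.
\begin{itemize}
\item Inner piece. Use the cancellation of $K$ to replace $f(x-y)$ by $f(x-y)-f(x)$. The Hölder continuity $|f(x-y)-f(x)|\le C|y|^{1-2/q}\|\nabla f\|_{L^q}$ (Morrey) then bounds this piece by $C\delta^{1-2/q}\|\nabla^2u\|_{L^q}$.
\item Middle annulus. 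Bound it by $\|f\|_{L^\infty}\int_{\delta}^{1}r^{-1}\,dr=\|f\|_{L^\infty}\log(1/\delta)$.
\item Outer piece. By Hölder, $|K|\in L^2(|y|>1)$, so this piece is at most $C\|f\|_{L^2}\le C\|\nabla u\|_{L^2}$.
\end{itemize}
This step is cleanest if done with a smooth cutoff $\chi(y/\delta)$, to avoid boundary terms.

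\emph{Step 3: choice of $\delta$.} Take $\delta=\min\{1,(e+\|\nabla^2u\|_{L^q})^{-q/(q-2)}\}$. Then the inner term is at most $C$, and
\[
\log(1/\delta)\le C(q)\log(e+\|\nabla^2u\|_{L^q}).
\]
Combining the three pieces gives
\[
\|\nabla u\|_{L^\infty}\le C\big(\|\operatorname{div}u\|_{L^\infty}+\|\nabla^\perp\cdot u\|_{L^\infty}\big)\log(e+\|\nabla^2u\|_{L^q})+C\|\nabla u\|_{L^2}+C .
\]
The inequality $\|f\|_{L^q}\lesssim\|\nabla^2 u\|_{L^q}$ needed along the way follows from \eqref{2dumm}.

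\emph{Main obstacle.} The hardest point is justifying the principal-value representation $\nabla u = K*f$ for $\nabla u\in L^2\cap W^{1,q}$, since $u$ itself need not decay. I would first prove the estimate for Schwartz fields and then pass to the limit by density, using the $L^2$ and $W^{1,q}$ continuity of Riesz transforms. An alternative is a Littlewood–Paley version: bound low frequencies by $\|\nabla u\|_{L^2}$ via Bernstein, bound frequencies up to $2^N$ by $N\|f\|_{L^\infty}$, bound high frequencies by $2^{-N(1-2/q)}\|\nabla^2u\|_{L^q}$, and choose $N\sim\log(e+\|\nabla^2u\|_{L^q})$.
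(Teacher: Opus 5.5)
Your argument is correct. It is the standard Beale--Kato--Majda/Kato kernel-splitting proof, which is exactly what the paper relies on: the paper gives no proof of Lemma~\ref{le9} of its own and simply cites \cite{B1,kato}, and the case $\div u\not\equiv 0$ goes through unchanged because $\nabla u$ is a zeroth-order singular integral of $(\div u,\nabla^\perp\cdot u)$. There are two harmless slips: the diagonal entries of $\nabla u$ also carry an identity part (e.g.\ $\partial_1u^1=\tfrac12\div u$ plus mean-zero principal-value integrals of $\div u$ and $\nabla^\perp\cdot u$), which only adds $C\|f\|_{L^\infty}$ and is absorbed since $\log(e+\cdot)\ge 1$; and the bound you need for the inner piece is the trivial pointwise one $|\nabla f|\le C|\nabla^2u|$, not $\|f\|_{L^q}\le C\|\nabla^2u\|_{L^q}$ (false by scaling) and not \eqref{2dumm}, which runs in the opposite direction.
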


Finally, the following regularity results on the Stokes equations will be useful for our derivation of higher order a priori estimates, whose proof is motivated by some techniques due to \cite[Lemma 2.4]{He2021}.
\begin{lemma} \label{stokes}
For positive constants $\underline\mu$ and $q\in (2, \infty)$, in addition to \eqref{vis},  assume that $\mu(\rho)$ satisfies
$$
0<\underline{\mu} \leq \mu(\rho), \quad\nabla\mu(\rho)\in L^q.
$$
Then, if $F\in L^p$ with   $p\in(1, \infty)$, there exists some positive constant $C$ 
such that the unique weak solution $(u,P)$ to the following Cauchy problem
\be\label{3rd1}
\begin{cases}
 -\nabla^\perp\big(\mu(\rho)\o\big)  +\nabla P=F,\,\,\,\,&x\in \mathbb{R}^2,\\
 \div u=0,   \,\,\,&x\in \mathbb{R}^2,\\
u(x)\rightarrow0,\,\,\,\,&|x|\rightarrow\infty,
\end{cases}
\ee satisfies
\begin{align}
\|\na \big(\mu(\rho)\o\big)\|_{L^p}+\|\na P\|_{L^p}\le C(p)\|F\|_{L^p}.\label{2dumqa}
\end{align}
Moreover, if $F\in L^r$ with   $r\in[2q/(q+2), q],$ there exists some positive constant $C$ 
such that the unique weak solution $(u,P)$ satisfies
\begin{align}
\|\nabla^2 u\|_{L^r}\leq C(r, \underline\mu)\|F\|_{L^r}+C(r, \underline\mu, q)\|\nabla\mu(\rho)\|_{L^q}^\frac{2q(r-1)}{r(q-2)}\|\nabla u\|_{L^2}.\label{2duqa}
\end{align}
\end{lemma}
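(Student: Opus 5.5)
The statement to prove is Lemma \ref{stokes}, and the plan is to use the special structure of the operator $-\nabla^\perp(\mu(\rho)\o)+\nabla P$.

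\textbf{First estimate \eqref{2dumqa}.} Set $G\triangleq\mu(\rho)\o$ and apply $\div$ and $\nabla^\perp\cdot$ to \eqref{3rd1}$_1$. Since $\div\nabla^\perp G=0$ and $\nabla^\perp\cdot\nabla P=0$, we get
$$
\Delta P=\div F, \qquad -\nabla^\perp\cdot\nabla^\perp G=-\Delta G=\nabla^\perp\cdot F .
$$
Hence $\nabla P=\nabla\Delta^{-1}\div F$ and $\nabla G=-\nabla\Delta^{-1}\nabla^\perp\cdot F$, up to constants fixed by the decay conditions. Both are Riesz transforms of order zero acting on $F$. The Calderón–Zygmund theorem then gives
$$
\|\nabla P\|_{L^p}+\|\nabla G\|_{L^p}\le C(p)\|F\|_{L^p}, \qquad 1<p<\infty,
$$
and no regularity of $\mu$ is needed at this step. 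Uniqueness of the weak solution makes this identification legitimate: the divergence-free part of $F$ equals $-\nabla^\perp G$ and the gradient part equals $\nabla P$, by the Helmholtz decomposition.

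\textbf{Second estimate \eqref{2duqa}.} Since $\div u=0$, \eqref{2dumm} gives $\|\nabla^2u\|_{L^r}\le C\|\nabla\o\|_{L^r}$. Write $\nabla\o=\mu^{-1}\big(\nabla G-\o\nabla\mu(\rho)\big)$, so that
$$
\|\nabla\o\|_{L^r}\le \underline\mu^{-1}\big(C\|F\|_{L^r}+\|\o\nabla\mu\|_{L^r}\big).
$$
For the product term, use Hölder with $1/r=1/q+1/s$; the range $r\in[2q/(q+2),q]$ gives $s\in[2,\infty]$:
$$
\|\o\nabla\mu\|_{L^r}\le \|\nabla\mu\|_{L^q}\|\nabla u\|_{L^s}.
$$
Then interpolate $\|\nabla u\|_{L^s}\le C\|\nabla u\|_{L^2}^{1-\theta}\|\nabla^2u\|_{L^r}^{\theta}$ by Gagliardo–Nirenberg in 2D, with $\theta$ fixed by scaling: $\frac1s=\frac{1-\theta}{2}+\theta\big(\frac1r-\frac12\big)$. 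Young's inequality absorbs $\|\nabla^2u\|_{L^r}^\theta$ into the left side and leaves
$$
C\|\nabla\mu\|_{L^q}^{1/(1-\theta)}\|\nabla u\|_{L^2}.
$$
A computation of $\theta$ should give the exponent $\frac{1}{1-\theta}=\frac{2q(r-1)}{r(q-2)}$.

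\textbf{Main obstacles.} The main obstacle is the endpoint $r=q$, where $s=\infty$ and the interpolation is borderline; here $\theta<1$ should still hold since $r>2$. One also has to check that the interpolation inequality applies with $\nabla^2u\in L^r$ and $\nabla u\in L^2$. A second difficulty is justifying a priori that $\nabla^2u\in L^r$, so that absorption is allowed. I would handle this by approximation, mollifying $\mu(\rho)$, or by a bootstrap following \cite[Lemma 2.4]{He2021}.
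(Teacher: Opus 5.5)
Your proposal is correct and is essentially the paper's proof: apply $\div$ and $\nabla^\perp\cdot$ to get $\Delta(\mu(\rho)\o)=-\nabla^\perp\cdot F$ and $\Delta P=\div F$, and use $L^p$ elliptic (Calder\'on--Zygmund) estimates; then write $\nabla\o=\mu^{-1}\nabla(\mu\o)-\mu^{-1}\o\nabla\mu$ and finish with H\"older (exponents $q$ and $rq/(q-r)$), Gagliardo--Nirenberg with $\theta=\frac{rq+2r-2q}{2q(r-1)}$, Young's inequality and \eqref{2dumm}. The only difference is cosmetic: the paper interpolates $\|\o\|_{L^{rq/(q-r)}}$ between $\|\o\|_{L^2}$ and $\|\nabla\o\|_{L^r}$ rather than $\nabla u$ between $L^2$ and $\|\nabla^2u\|_{L^r}$. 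The paper passes over the absorption issue you flag, but it needs no mollification: $\mu(\rho)$ is bounded for $0\le\rho\le\bar\rho$, so $\mu(\rho)\o\in L^2$, and the first estimate gives $\nabla(\mu(\rho)\o)\in L^r$; Sobolev embedding then puts $\mu(\rho)\o$, hence $\o$, in $L^{rq/(q-r)}$, so $\o\nabla\mu(\rho)\in L^r$ and $\nabla\o\in L^r$ a priori.
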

\begin{proof}
First, operating $\nabla^\perp\cdot$ and $\div$ to $\eqref{3rd1}_1$ yields that
\begin{align}\la{a39}
\lap \big(\mu(\rho)\omega\big)=-\na^\perp\cdot F,\quad \lap P=\div F,
\end{align}
which together with the standard $L^p$-estimate of elliptic equations implies that for $p\in (1,\infty),$
\be \la{muaa}
\|\na \big(\mu(\rho)\o\big)\|_{L^p}+\|\na P\|_{L^p}\le C(p)\|F\|_{L^p}.
\ee

Next, note that
$$
\nabla\omega=\nabla\left(\frac{\mu(\rho)\omega}{\mu(\rho)}\right)=\frac{1}{\mu(\rho)}\nabla\big({\mu(\rho)\omega}\big)
-\frac{1}{\mu(\rho)}\omega\nabla\mu(\rho),
$$
which, together with the Sobolev inequality and \eqref{muaa}, implies that for any $r\in[2q/(q+2), q]$,
$$
\begin{aligned}
\|\nabla\omega\|_{L^r}&\leq C(r, \underline\mu)\|\nabla\big({\mu(\rho)\omega}\big)\|_{L^r}+C(r, \underline\mu)\|\omega\nabla\mu(\rho)\|_{L^r}\\
&\leq C(r, \underline\mu)\|F\|_{L^r}+C(r, \underline\mu)\|\nabla\mu(\rho)\|_{L^q}\|\omega\|_{L^{\frac{rq}{q-r}}}\\
&\leq C(r, \underline\mu)\|F\|_{L^r}+C(r, \underline\mu, q)\|\nabla\mu(\rho)\|_{L^q}\|\omega\|_{L^2}^{1-\frac{rq+2r-2q}{2q(r-1)}}
\|\nabla\omega\|_{L^r}^\frac{rq+2r-2q}{2q(r-1)}\\
&\leq C(r, \underline\mu)\|F\|_{L^r}+C(r, \underline\mu, q)\|\nabla\mu(\rho)\|_{L^q}^\frac{2q(r-1)}{r(q-2)}\|\omega\|_{L^2}
+\frac{1}{2}
\|\nabla\omega\|_{L^r},
\end{aligned}
$$
which combined with \eqref{2dumm} leads to \eqref{2duqa}.
\end{proof}
\section{\la{se13}Vacuum at infinity ($\boldsymbol{\rho_\infty=0}$)}
In this section, we will establish some necessary a priori bounds for strong solutions $(\rho,u,P)$ to the Cauchy problem \eqref{NS}-\eqref{ydxw} in the case of $\rho_{\infty}=0$ to extend the local strong solution. Thus, let $T>0$ be a fixed time and $(\rho, u,P)$  be the strong solution to \eqref{NS}-\eqref{ydxw} on $\mathbb{R}^{2}\times(0,T]$ with initial data $(\rho_0,u_0)$ satisfying \eqref{ct}-\eqref{xct}.

In this section, in addition to the conditions of Theorem \ref{T1}, without loss of generality, we will always assume that $(\n_0,u_0)$  satisfies
\be\label{rho1}\frac12\le \int_{B_{N_0}}\rho_0(x)dx \le \int_{\r^2 }\n_0(x)dx \leq2,\ee
for some a positive constant $N_0.$


\subsection{\la{se3}A priori estimates (I): upper bound of the $\boldsymbol{\|\nabla\rho\|_{L^q}}$}

The following Proposition \ref{aupper}  will give an  upper bound of the $\|\nabla\rho\|_{L^q}$ which is the key to extending the local solution to be a global one.

\begin{proposition}\la{aupper}
	Under the conditions of Theorem \ref{T1},  for
	\bnn E_0\triangleq  \|\bar x^a\n_0\|_{L^1}+\|\rho_0\|_{L^\infty}+ \|\sqrt\n_0u_0 \|_{L^2}+ \|\na u_0\|_{L^2}+ \|\na \rho_0\|_{L^q}, \enn
	there is a positive constant  $C $ depending only on $a$, $q$, $\underline\mu$, $\bar{\mu'}$, $N_0$ and $E_0$ such that
	\be\la{b3.56}
	\sup_{t\in[0, T]}\|\nabla\rho\|_{L^q}+\int_0^T\|\na u\|_{L^\infty}dt\leq C(T).
	\ee
\end{proposition}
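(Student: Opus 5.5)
The plan is to combine lower-order a priori estimates, which do not involve $\nabla\rho$, with a logarithmic Gr\"onwall argument for $\|\nabla\rho\|_{L^q}$. These lower-order bounds are the content of Lemmas \ref{lem3.3}--\ref{lem3.4}, proved earlier in this subsection.

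\textbf{Basic bounds.} From the transport equation and $\div u=0$ we have $\|\rho\|_{L^\infty}\le\bar\rho$ and $\|\rho\|_{L^1}=\|\rho_0\|_{L^1}$. The energy identity gives
\[
\sup_t\|\sqrt\rho u\|_{L^2}^2+\int_0^T\|\sqrt{\mu(\rho)}\omega\|_{L^2}^2dt\le C .
\]
Next I control the spatially weighted mean $\int\rho\bar x^a dx$, the lower bound \eqref{rjg} on $\int_{B_{N_1}}\rho\,dx$, and hence Lemma \ref{lemma2.6}. I then test the momentum equation with $\dot u$. The diffusion term is handled by writing
\[
-\int\nabla^\perp(\mu\omega)\cdot\dot u\,dx=\frac12\frac{d}{dt}\int\mu(\rho)\omega^2dx+\cdots,
\]
where the remainder uses $\dot\mu=0$. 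Because $\dot\mu=0$, the time derivative of $\mu(\rho)$ produces no term containing $\nabla\rho$. The key structural fact is the Stokes estimate \eqref{2dumqa} with $F=-\rho\dot u$:
\[
\|\nabla(\mu(\rho)\omega)\|_{L^2}+\|\nabla P\|_{L^2}\le C\|\sqrt\rho\dot u\|_{L^2}.
\]
This is independent of $\nabla\rho$. The cubic terms are treated with the Hardy/BMO duality of Lemma \ref{lem26} in the spirit of L\"u--Shi--Zhong. Applying $\dot u(\partial_t+\div(u\,\cdot))$ to the momentum equation and using $t$-weights gives
\[
\sup_t t\|\sqrt\rho\dot u\|_{L^2}^2+\int_0^T t\|\nabla\dot u\|_{L^2}^2dt\le C(T).
\]
Lemma \ref{lemma2.6} then yields
\[
\|\rho\dot u\|_{L^q}\le C\bigl(\|\sqrt\rho\dot u\|_{L^2}+\|\nabla\dot u\|_{L^2}\bigr).
\]
Combining the two displays gives $\int_0^T\|\rho\dot u\|_{L^q}^{(q+1)/q}dt\le C(T)$ after a time-weight interpolation, and also
\[
\int_0^T \|\rho\dot u\|_{L^q}^{\frac{q}{2(q-1)}}\,dt\le C(T).
\]

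\textbf{Gradient of $\rho$.} Differentiating the transport equation gives $\partial_t\nabla\rho+u\cdot\nabla\nabla\rho=-\nabla u\cdot\nabla\rho$, so
\[
\frac{d}{dt}\|\nabla\rho\|_{L^q}\le C\|\nabla u\|_{L^\infty}\|\nabla\rho\|_{L^q}.
\]
By Lemma \ref{le9} with $\div u=0$,
\[
\|\nabla u\|_{L^\infty}\le C\|\omega\|_{L^\infty}\log(e+\|\nabla^2u\|_{L^q})+C.
\]
I bound $\|\omega\|_{L^\infty}\le C\|\mu\omega\|_{L^\infty}$ and then use Gagliardo--Nirenberg:
\[
\|\mu\omega\|_{L^\infty}\le C\|\mu\omega\|_{L^2}^{\frac{q-2}{2(q-1)}}\|\nabla(\mu\omega)\|_{L^q}^{\frac{q}{2(q-1)}}\le C\|\rho\dot u\|_{L^q}^{\frac{q}{2(q-1)}}.
\]
The last step uses \eqref{2dumqa} with $p=q$. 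For the logarithm, \eqref{2duqa} with $r=q$ gives
\[
\|\nabla^2u\|_{L^q}\le C\|\rho\dot u\|_{L^q}+C\|\nabla\rho\|_{L^q}^{\frac{2(q-1)}{q-2}}+C,
\]
where I used $\nabla\mu=\mu'\nabla\rho$, so only a power of $\|\nabla\rho\|_{L^q}$ appears inside the log. Therefore, with $f=e+\|\nabla\rho\|_{L^q}$ and $g=\|\rho\dot u\|_{L^q}$,
\[
(\log f)'\le C\bigl(1+g^{\frac{q}{2(q-1)}}\bigr)\bigl(\log(e+g)+\log f\bigr).
\]
Since $\int_0^T g^{\frac{q}{2(q-1)}}\log(e+g)\,dt\le C\int_0^T(1+g^{(q+1)/q})\,dt\le C(T)$, Gr\"onwall applied to $\log f$ bounds $\sup_t\|\nabla\rho\|_{L^q}$. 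Reinserting this bound gives $\int_0^T\|\nabla u\|_{L^\infty}dt\le C(T)$.

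\textbf{Main obstacle.} The hard part is the preceding step: obtaining the time-integrability of $\|\rho\dot u\|_{L^q}$ (and the $\dot u$ estimates) without any control of $\nabla\rho$. This requires handling the pressure and commutator terms, such as $\partial_t P$ and $u\cdot\nabla$ acting on $\nabla^\perp(\mu\omega)$, using only $\nabla(\mu\omega)$ and $\nabla P$ via \eqref{a39}, together with the Hardy--BMO estimate.
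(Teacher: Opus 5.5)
Your route is the paper's. You first obtain the $\nabla\rho$-free bounds of Lemmas \ref{lem3.1}, \ref{lem3.3}, \ref{lem3.4} and \ref{lem03.6}. You then combine Lemma \ref{le9}, Gagliardo--Nirenberg for $\|\mu(\rho)\omega\|_{L^\infty}$ and Lemma \ref{stokes} with $F=-\rho\dot u$, and close with a logarithmic Gr\"onwall argument for $\log(e+\|\nabla\rho\|_{L^q})$. You are in fact more careful than the paper about the power $\frac{2(q-1)}{q-2}$ of $\|\nabla\rho\|_{L^q}$ inside the logarithm.

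One intermediate claim is wrong as stated. From $\|\rho\dot u\|_{L^q}\le C(\|\sqrt\rho\dot u\|_{L^2}+\|\nabla\dot u\|_{L^2})$ and $\int_0^T t\|\nabla\dot u\|_{L^2}^2dt\le C$, you cannot get $\int_0^T\|\rho\dot u\|_{L^q}^{(q+1)/q}dt\le C(T)$. That would need $\int_0^T\|\nabla\dot u\|_{L^2}^{(q+1)/q}dt<\infty$, but H\"older against the weighted bound leaves $\int_0^T t^{-(q+1)/(q-1)}dt=\infty$. Without a compatibility condition there is no unweighted control of $\nabla\dot u$ near $t=0$. The paper avoids this in Lemma \ref{lem03.6} by interpolating $L^q$ between $L^2$ and $L^{q^2}$ and applying Lemma \ref{lemma2.6} only to the $L^{q^2}$ factor. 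Then $\|\nabla\dot u\|_{L^2}$ enters only with power $\frac{q(q-2)}{q^2-2}$, and the leftover time weight is integrable.

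For this proposition, however, you do not need the exponent $(q+1)/q$. To absorb the logarithm in $\int_0^T g^{\frac{q}{2(q-1)}}\log(e+g)\,dt$, it suffices to have $\int_0^T g^p\,dt<\infty$ for some $p\in\bigl(\frac{q}{2(q-1)},1\bigr)$; this interval is nonempty because $q>2$. Your direct bound does give this, since
$\int_0^T\|\nabla\dot u\|_{L^2}^p dt\le \bigl(\int_0^T t\|\nabla\dot u\|_{L^2}^2dt\bigr)^{p/2}\bigl(\int_0^T t^{-p/(2-p)}dt\bigr)^{1-p/2}$, and $p/(2-p)<1$ for $p<1$. With this replacement, or with the paper's $L^2$--$L^{q^2}$ interpolation, your argument closes as written.
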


Before proving Proposition \ref{aupper}, we establish some a priori estimates, Lemmas \ref{lem3.1}-\ref{lem03.6}.

We begin with the following $L^p$-norm estimate of the density and the standard energy estimate.
\begin{lemma}\label{lem3.1}
There exists a positive constant $C$ depending only  on $\underline\mu$ and $\|\sqrt{\n_0} u_0\|_{L^2}$ such that
\begin{align}
&\sup_{t\in[0,T]}\|\rho\|_{L^p}\leq\|\rho_0\|_{L^p},\ for \ 1\leq p\leq\infty,\label{3.1}\\
&\sup_{t\in[0,T]}\|\sqrt{\rho} u \|_{L^2}^2
+\int_{0}^{T}\|\nabla u \|_{L^2}^{2}dt\leq C.\label{0.11}
\end{align}
\end{lemma}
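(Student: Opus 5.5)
The plan is to use two standard ingredients: the transport structure of the mass equation together with $\div u=0$, which gives \eqref{3.1}, and the basic energy identity, which gives \eqref{0.11}.

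\emph{Proof of \eqref{3.1}.} Since $\div u=0$, the mass equation becomes the pure transport equation $\rho_t+u\cdot\nabla\rho=0$.
\begin{itemize}
\item For $1\le p<\infty$, multiply by $p\rho^{p-1}$ (for the strong solution, $\rho\ge0$ is regular enough) and integrate over $\mathbb{R}^2$. This gives
\[
\frac{d}{dt}\int\rho^p\,dx=-\int u\cdot\nabla(\rho^p)\,dx=\int \rho^p\div u\,dx=0 .
\]
\item The boundary terms at infinity vanish because of the decay of $\rho$ in the class \eqref{zzx} and the bound $\bar x^{-1}u\in L^2$.
\item Equivalently, along the flow map $X(t,x)$ of $u$, which is measure preserving since $\div u=0$, one has $\rho(X(t,x),t)=\rho_0(x)$. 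This yields $\|\rho(t)\|_{L^p}=\|\rho_0\|_{L^p}$, in particular the inequality claimed.
\item The case $p=\infty$ follows either by letting $p\to\infty$ or directly from the Lagrangian representation.
\end{itemize}

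\emph{Proof of \eqref{0.11}.} Rewrite the momentum equation, using the mass equation, as $\rho u_t+\rho u\cdot\nabla u-\nabla^\perp(\mu(\rho)\omega)+\nabla P=0$. Take the $L^2$ inner product with $u$.
\begin{itemize}
\item The first two terms combine, via the mass equation, into $\frac12\frac{d}{dt}\int\rho|u|^2dx$.
\item The pressure term vanishes after integration by parts, since $\div u=0$.
\item For the viscous term, integrating by parts gives
\[
-\int\nabla^\perp(\mu(\rho)\omega)\cdot u\,dx=\int\mu(\rho)\omega\,(\nabla^\perp\cdot u)\,dx=\int\mu(\rho)\omega^2dx .
\]
Here the sign convention $\nabla^\perp f=(\partial_2 f,-\partial_1 f)$ and $\omega=\partial_2u^1-\partial_1u^2$ must be checked carefully.
\end{itemize}
This yields
\[
\frac12\frac{d}{dt}\|\sqrt\rho u\|_{L^2}^2+\int\mu(\rho)\omega^2dx=0 .
\]
Since $\rho$ stays in $[0,\bar\rho]$ by \eqref{3.1}, we have $\mu(\rho)\ge\underline\mu$. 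Lemma \ref{lem33} with $\div u=0$ gives $\|\nabla u\|_{L^2}\le C\|\omega\|_{L^2}$; in fact $\|\nabla u\|_{L^2}=\|\omega\|_{L^2}$ for divergence-free fields in the whole plane. Integrating in time then gives \eqref{0.11}, with $C$ depending only on $\underline\mu$ and $\|\sqrt{\rho_0}u_0\|_{L^2}$.

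The main point of care, rather than a real obstacle, is justifying the integrations by parts on the whole plane for the strong solution, since $u$ itself need not lie in $L^2$ when $\rho_\infty=0$. I would handle this in one of two ways:
\begin{itemize}
\item multiply by a cutoff $\phi_R(x)=\phi(x/R)$ and let $R\to\infty$, controlling the error terms with $\bar x^{-1}u\in L^2$, $\nabla u\in L^2$, and the integrability of $\rho$, $P$, $\nabla(\mu\omega)$ from \eqref{zzx};
\item or perform the computation for the smooth approximate solutions with positive density and pass to the limit.
\end{itemize}
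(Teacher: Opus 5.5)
Your proposal is correct and follows the paper's proof. The paper obtains \eqref{3.1} from the standard transport argument for $\rho_t+u\cdot\nabla\rho=0$ with $\div u=0$ (citing Lions), and \eqref{0.11} from the energy identity $\frac12\frac{d}{dt}\int\rho|u|^2dx+\int\mu(\rho)|\omega|^2dx=0$ together with $\mu(\rho)\ge\underline\mu$ and Lemma~\ref{lem33}; your added justification of the integrations by parts (cutoffs or smooth approximation) goes beyond what the paper writes and is consistent with it.
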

\begin{proof}
First, standard arguments (see \cite{L1996}) directly give the desired \eqref{3.1}.

Next, multiplying \eqref{NS}$_2$ by $u$ and integrating (by parts) over ${\O}$, we derive that
$$
\frac{1}{2}\frac{d}{dt}\int\rho|u|^2dx+\int\mu(\rho)|\o|^2dx=0,
$$
which combined with  \eqref{dumm} gives \eqref{0.11}. The proof of Lemma \ref{lem3.1} is finished.
\end{proof}


Next, the following lemma concerns the key time-independent estimates on the $L^\infty(0,T;L^2)$-norm of the gradient of the velocity.
\begin{lemma}\label{lem3.3}
There exists a positive constant $C$ depending only  on $\underline\mu, \bar{\mu'}, \|\rho_0\|_{L^\infty}, \|\sqrt{\n_0} u_0\|_{L^2}$ and $\|\na u_0\|_{L^2}$
such that for $i=0,1,$
\begin{align}
\sup_{t\in[0,T]} \big(t^i\|\nabla u \|_{L^2}^2\big) +\int_{0}^{T}t^i\|\sqrt\rho\dot{ u
}\|_{L^2}^2dt\leq C.\label{3.5}
\end{align}
\end{lemma}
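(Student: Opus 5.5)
The plan is to test the momentum equation \eqref{NS}$_2$ against the material derivative $\dot u$, in the spirit of L\"u--Shi--Zhong \cite{Lv2018}. Written as $\rho\dot u-\nabla^\perp(\mu(\rho)\o)+\nabla P=0$, multiplying by $\dot u$ and integrating gives
\begin{equation*}
\int\rho|\dot u|^2dx-\int\nabla^\perp\big(\mu(\rho)\o\big)\cdot\dot u\,dx+\int\nabla P\cdot\dot u\,dx=0 .
\end{equation*}

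\textbf{Viscous term.} Integrating by parts gives $-\int\nabla^\perp f\cdot v\,dx=\int f\,\nabla^\perp\cdot v\,dx$, and $\nabla^\perp\cdot u_t=\o_t$. Hence the $u_t$-part equals $\int\mu\o\o_t\,dx$, which we write as
\begin{equation*}
\frac{d}{dt}\int\tfrac12\mu(\rho)\o^2\,dx-\frac12\int\mu(\rho)_t\o^2\,dx .
\end{equation*}
By the transport equation, $\mu(\rho)_t=-u\cdot\nabla\mu(\rho)$. Since $\div u=0$, the second integral becomes $-\frac12\int\mu\,u\cdot\nabla(\o^2)\,dx$.

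For the convective part we use $\nabla^\perp\cdot(u\cdot\nabla u)=u\cdot\nabla\o+\partial_2u\cdot\nabla u^1-\partial_1u\cdot\nabla u^2$. Its transport piece $\int\mu\o\,u\cdot\nabla\o\,dx$ cancels exactly against $-\frac12\int\mu\,u\cdot\nabla\o^2\,dx$. This cancellation is why no derivative of $\rho$ appears. What remains is bounded by $C\|\nabla u\|_{L^3}^3$, with $C$ depending on $\max_{[0,\bar\rho]}\mu$.

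\textbf{Pressure term.} Because $\div u=0$, we have $\int\nabla P\cdot u_t\,dx=0$. Then $\int\nabla P\cdot(u\cdot\nabla u)\,dx=-\int P\,\partial_iu^j\partial_ju^i\,dx$. For each $i$, $\partial_iu^j\partial_ju^i=\partial_iu\cdot\nabla u^i$ has div--curl structure, since $\div(\partial_iu)=0$ and $\nabla^\perp\cdot\nabla u^i=0$. By Lemma \ref{lem26}, its $\mathcal H^1$-norm is at most $C\|\nabla u\|_{L^2}^2$, and $\|P\|_{BMO}\le C\|\nabla P\|_{L^2}$. By $\mathcal H^1$--BMO duality the pressure term is therefore bounded by $C\|\nabla u\|_{L^2}^2\|\nabla P\|_{L^2}$.

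\textbf{Closing the energy inequality.} Apply Lemma \ref{stokes} with $F=-\rho\dot u$ and $p=2$, which gives
\begin{equation*}
\|\nabla(\mu\o)\|_{L^2}+\|\nabla P\|_{L^2}\le C\|\rho\dot u\|_{L^2}\le C\|\sqrt\rho\dot u\|_{L^2},
\end{equation*}
using \eqref{3.1}. By \eqref{dumm}, Gagliardo--Nirenberg and $\mu\ge\underline\mu$,
\begin{equation*}
\|\nabla u\|_{L^3}^3\le C\|\mu\o\|_{L^3}^3\le C\|\mu\o\|_{L^2}^2\|\nabla(\mu\o)\|_{L^2}\le C\|\nabla u\|_{L^2}^2\|\sqrt\rho\dot u\|_{L^2}.
\end{equation*}
Young's inequality then yields
\begin{equation*}
\frac{d}{dt}\int\mu(\rho)\o^2dx+\|\sqrt\rho\dot u\|_{L^2}^2\le C\|\nabla u\|_{L^2}^4 .
\end{equation*}
Since $\int\mu\o^2\,dx$ is equivalent to $\|\nabla u\|_{L^2}^2$ (by \eqref{dumm} and the bounds on $\mu$ over $[0,\bar\rho]$), Gr\"onwall's inequality with $\int_0^T\|\nabla u\|_{L^2}^2dt\le C$ from \eqref{0.11} gives the case $i=0$. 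For $i=1$, multiply the inequality by $t$. The extra term $\int\mu\o^2\,dx$ produced by $\frac{d}{dt}t$ is integrable in time by \eqref{0.11}, and the same Gr\"onwall argument applies. All constants involved are independent of $T$.

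\textbf{Main obstacle.} The delicate step is justifying the $\mathcal H^1$--BMO pairing for $P$. One must check that $P\in\tilde D^{1,2}$ (via Lemma \ref{stokes}, with $P$ normalized suitably), and that the boundary terms in all integrations by parts vanish. The latter uses the regularity \eqref{zzx} and the decay at infinity, and may require working with smooth approximations.
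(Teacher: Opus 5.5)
Your proof is correct and is essentially the paper's argument: test \eqref{NS}$_2$ against $\dot u$, use the Stokes bound $\|\nabla(\mu(\rho)\o)\|_{L^2}+\|\nabla P\|_{L^2}\le C\|\sqrt\rho\dot u\|_{L^2}$ and $\mathcal H^1$--BMO duality for the pressure term, then apply Gr\"onwall with the weights $t^i$ together with \eqref{0.11}. The only difference is cosmetic: your leftover term $\partial_2u\cdot\nabla u^1-\partial_1u\cdot\nabla u^2$ equals $\o\,\div u$, which vanishes, so $\nabla^\perp\cdot(u\cdot\nabla u)=u\cdot\nabla\o$ exactly (this identity is what the paper uses), and your $\|\nabla u\|_{L^3}^3$ bound, though valid, is not needed.
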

\begin{proof}
First, it follows from \eqref{NS}$_2$, \eqref{2dumqa} and \eqref{3.1} that
\begin{align}\label{paz}
\|\nabla\big(\mu(\rho)\o\big)\|_{L^2}+\|\nabla P\|_{L^2}\leq C\|\sqrt\rho\dot{u}\|_{L^2}.
\end{align}

Next, multiplying \eqref{NS}$_2$ by $\dot{u}$ and integrating the resulting equality over $\mathbb{R}^2$ lead to
\begin{equation}\label{3.6}
\int\rho|\dot{ u }|^{2}dx=\int\left(\nabla^\bot\big(\mu(\rho)\omega\big) \cdot\dot{ u }-\nabla P\cdot\dot{ u }\right)dx\triangleq I_{1}+I_{2}.
\end{equation}
It follows from integration by parts that
\begin{equation}
\begin{split}\nonumber
I_{1}  &  =\int\nabla^\bot\big(\mu(\rho)\omega\big) \cdot(u_t+ u \cdot\nabla u )dx \\
&=-\int\mu(\rho)\omega\o_t +\mu(\rho)\o \nabla^\perp\cdot (u\cdot\nabla u) dx \\
  &  = -\int\mu(\rho)\omega\o_t +\mu(\rho)\o u\cdot\nabla\o dx\\
 &  =-\frac12\frac{d}{dt}\int\mu(\rho)|\o|^2dx,
\end{split}
\end{equation}
where we have used $\na^\bot (u\cdot\na) \cdot u=0$.
The combination of \eqref{mnxz}, \eqref{lem1}, and \eqref{paz} yields that after using integration by parts
\begin{equation}\ba\label{zaqa}
I_{2}&=-\int\nabla P\cdot(u_t+u\cdot\nabla u)dx=\int P\div(u\cdot\nabla u)dx\\
&=\sum_{i=1}^2\int P\partial_{i}u\cdot\nabla u^{i}dx \leq C\sum_{i=1}^2\|P\|_{BMO}\|\partial_{i}u\cdot\nabla u^{i}\|_{\mathcal{H}^{1}}\\
&\leq C\|\nabla P\|_{L^2}\|\nabla u \|_{L^2}^{2}\leq \frac{1}{2}\|\sqrt\rho\dot{u}\|_{L^2}^2+C\|\nabla u\|_{L^2}^4,\ea
\end{equation}
where one has used the duality of $\mathcal{H}^1$ space and  BMO  one (see \cite[Chapter IV]{S1993}) in the first inequality.

Then, substituting $I_1$ and $I_2$ into \eqref{3.6}, and then using \eqref{dumm}, we obtain
\begin{align}\label{3.11}
\frac{d}{dt}\int\mu(\rho)|\o|^2dx+\|\sqrt{\rho}\dot{ u
}\|_{L^2}^{2}  \leq C\|\nabla u \|_{L^2}^{2}\int\mu(\rho)|\o|^2dx ,
\end{align}
multiplying \eqref{3.11} by $t^i (i=0,1)$, together with \eqref{0.11} and Gr\"onwall's inequality yields \eqref{3.5}. The proof of Lemma \ref{lem3.3} is finished.%
\end{proof}


\begin{lemma}\label{lem3.4}
There exists a positive constant $C$ depending only on $\underline\mu, \bar{\mu'}, \|\n_0\|_{L^2\cap L^\infty}, \|\sqrt{\n_0} u_0\|_{L^2}$ and $\|\na u_0\|_{L^2} $ such that for $i=1,2$,
\begin{equation}\label{133}
\sup_{t\in[0,T]}t^i\|\sqrt{\rho}\dot{ u }\|_{L^2}^2
+\int_{0}^{T}t^i\|\nabla\dot{ u }\|_{L^2}^{2}dt\leq C,
\end{equation}
and
\begin{equation}\label{i313}
\sup_{t\in[0,T]}\left(t^\frac{2(p-1)}{p}\|\nabla u\|_{L^p}^2+t^2\|\na\big(\mu(\rho)\o\big)\|_{L^2}^2+t^2\|\na P\|_{L^2}^2\right) \leq C(p).
\end{equation}
\end{lemma}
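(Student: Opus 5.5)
We will follow the Hoff / Lü–Shi–Zhong strategy: apply the operator $\dot u^j[\partial_t+\div(u\,\cdot)]$ to the momentum equation, written componentwise as $\rho\dot u^j-(\nabla^\perp(\mu(\rho)\omega))^j+\partial_jP=0$. We then test with $\dot u^j$, integrate over $\mathbb R^2$, and use the mass equation. This yields
\begin{equation*}
\frac12\frac{d}{dt}\int\rho|\dot u|^2dx=\int \dot u^j\big[\partial_t(\nabla^\perp(\mu\omega))^j+\div\big(u(\nabla^\perp(\mu\omega))^j\big)\big]dx-\int\dot u^j\big[\partial_t\partial_jP+\div(u\,\partial_jP)\big]dx\triangleq J_1+J_2.
\end{equation*}

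For $J_1$ we integrate by parts. The combination $\partial_t(\mu\omega)+u\cdot\nabla(\mu\omega)$ equals $\mu(\rho)\dot\omega$, because $\dot{\overline{\mu(\rho)}}=0$ by the transport equation for $\rho$. This is the key structural point: $\nabla\rho$ never appears. Writing $\dot\omega=\nabla^\perp\cdot\dot u-(\text{quadratic in }\nabla u)$, we get
\begin{equation*}
J_1\le-\int\mu(\rho)|\nabla^\perp\cdot\dot u|^2dx+C\|\nabla\dot u\|_{L^2}\|\nabla u\|_{L^4}^2.
\end{equation*}
The commutator terms of the form $\int\mu\omega\,\partial_iu\cdot\nabla\dot u$ are also bounded by $\|\nabla u\|_{L^4}^2$ together with $\|\mu\omega\|_{L^\infty}$-free pieces. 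If needed, we handle them with $\|\mu\omega\|_{L^4}\|\nabla u\|_{L^4}$.

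For $J_2$ we integrate by parts as well. Since $\div\dot u=\partial_iu^j\partial_ju^i$, the term $\int P_t\div\dot u$ does not vanish. We rewrite it as
\begin{equation*}
\int\partial_t P\,\partial_iu^j\partial_ju^i\,dx=\frac{d}{dt}\int P\,\partial_iu^j\partial_ju^i\,dx-2\int P\,\partial_iu^j\partial_j u^i_t\,dx,
\end{equation*}
and then trade $u_t$ for $\dot u-u\cdot\nabla u$. The cubic pressure terms are controlled using $\|P\|_{L^4}\|\nabla u\|_{L^4}^2$ and $\|\nabla u\|_{L^4}^2\|\nabla P\|_{L^2}$. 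The $\|P\|_{L^4}$ bound is where $\|\rho\|_{L^2}$ enters, via $\|P\|_{L^4}\le C\|P\|_{L^2}^{1/2}\|\nabla P\|_{L^2}^{1/2}$ and $\|P\|_{L^2}\lesssim\|\rho\dot u\|_{L^{4/3}}$. We expect this term to be the main obstacle; it is the reason the lemma depends on $\|\rho_0\|_{L^2}$. The time derivative $\frac{d}{dt}\int P\,\partial_iu^j\partial_ju^i$ is absorbed after time weighting, using the bound $|\int P\partial_iu^j\partial_ju^i|\le C\|\nabla P\|_{L^2}\|\nabla u\|_{L^2}^2$ from Lemma \ref{lem26}.

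All the resulting quantities are controlled by $\|\sqrt\rho\dot u\|_{L^2}$. By \eqref{paz}, Lemma \ref{lem33} and Gagliardo–Nirenberg,
\begin{equation*}
\|\nabla u\|_{L^4}^2\le C\|\omega\|_{L^4}^2\le C\|\mu\omega\|_{L^2}\|\nabla(\mu\omega)\|_{L^2}\le C\|\nabla u\|_{L^2}\|\sqrt\rho\dot u\|_{L^2},
\end{equation*}
where we use $\|\mu\omega\|_{L^4}\simeq\|\omega\|_{L^4}$ since $\mu$ is bounded above and below. Also $\|\nabla\dot u\|_{L^2}\le C\|\nabla^\perp\cdot\dot u\|_{L^2}+C\|\nabla u\|_{L^4}^2$, by Lemma \ref{lem33} and $\div\dot u=\partial_iu^j\partial_ju^i$. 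Combining these, we aim for the differential inequality
\begin{equation*}
\frac{d}{dt}B(t)+\|\nabla\dot u\|_{L^2}^2\le C\big(1+\|\nabla u\|_{L^2}^2\big)\|\sqrt\rho\dot u\|_{L^2}^4+C\|\sqrt\rho\dot u\|_{L^2}^2,
\end{equation*}
where $B=\|\sqrt\rho\dot u\|_{L^2}^2-2\int P\,\partial_iu^j\partial_ju^i$ is comparable to $\|\sqrt\rho\dot u\|_{L^2}^2$ up to $C\|\nabla u\|_{L^2}^4$. We multiply by $t^i$ for $i=1,2$. Then Lemma \ref{lem3.3} gives $\int_0^Tt^{i-1}\|\sqrt\rho\dot u\|_{L^2}^2\,dt\le C$, which handles both the weight derivative and the Gronwall factor $\int\|\sqrt\rho\dot u\|_{L^2}^2\,dt$. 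This yields \eqref{133}.

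Finally, \eqref{i313} follows from \eqref{paz} and the $i=2$ case, which give $t^2(\|\nabla(\mu\omega)\|_{L^2}^2+\|\nabla P\|_{L^2}^2)\le C$. For $\|\nabla u\|_{L^p}$ we use Gagliardo–Nirenberg:
\begin{equation*}
\|\nabla u\|_{L^p}\le C\|\mu\omega\|_{L^p}\le C\|\nabla u\|_{L^2}^{2/p}\|\nabla(\mu\omega)\|_{L^2}^{1-2/p}.
\end{equation*}
Combined with $t\|\nabla u\|_{L^2}^2\le C$ from Lemma \ref{lem3.3} and $t^2\|\sqrt\rho\dot u\|_{L^2}^2\le C$, this gives
\begin{equation*}
\|\nabla u\|_{L^p}^2\le Ct^{-2/p}t^{-2(1-2/p)}=Ct^{-2(p-1)/p}.
\end{equation*}
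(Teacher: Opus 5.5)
Your overall scheme matches the paper's. You apply $\partial_t+u\cdot\nabla$ and test with $\dot u$, and use $\partial_t\mu(\rho)+u\cdot\nabla\mu(\rho)=0$ to keep $\nabla\rho$ out of the viscous term. You add the corrector $\int P\,\partial_iu^j\partial_ju^i\,dx$, let $\|\rho\|_{L^2}$ enter through $\|P\|_{L^4}$, close with a time-weighted Gr\"onwall argument, and get \eqref{i313} by Gagliardo--Nirenberg. Your $J_1$ and your last step are fine; in fact $\dot\omega=\nabla^\perp\cdot\dot u$ exactly, since the quadratic term is $\omega\,\div u=0$.

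\textbf{The gap is in the pressure term.} Besides $\int P_t\,\div\dot u\,dx$, your $J_2$ contains the convective piece $\int u^k\partial_k\dot u^j\,\partial_jP\,dx$, which you never estimate. Trading $u_t=\dot u-u\cdot\nabla u$ also produces $-\int u\cdot\nabla P\,\partial_iu^j\partial_ju^i\,dx$. Your bound $\|\nabla u\|_{L^4}^2\|\nabla P\|_{L^2}$ for that term silently drops a factor $\|u\|_{L^\infty}$, and a direct bound of the convective piece would need $\|u\|_{L^\infty}\|\nabla\dot u\|_{L^2}\|\nabla P\|_{L^2}$.

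With $\rho_\infty=0$ there is no bound on $\|u\|_{L^\infty}$ or $\|u\|_{L^2}$ in terms of the quantities the lemma allows. Only $\nabla u$, $\sqrt\rho u$ and weighted norms such as \eqref{a4.21} are controlled, and the weighted ones come later and depend on $T$. Section \ref{se321} can use $\|u\|_{L^\infty}$ only because \eqref{zzxz}--\eqref{mmb} exploit $\rho_\infty>0$.

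This mis-estimate is also what puts the linear term $C\|\sqrt\rho\dot u\|_{L^2}^2$ into your target inequality, and that term does not close for $i=2$. After multiplying by $t^2$ it requires $\int_0^Tt^2\|\sqrt\rho\dot u\|_{L^2}^2dt$, which Lemma \ref{lem3.3} does not provide, since it only goes up to weight $t$. At best you would get a $T$-dependent constant and lose the decay in \eqref{i313}.

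\textbf{The missing idea} is that the two convective pressure terms cancel exactly. Integrating by parts and using $\div u=0$,
\begin{equation*}
\int u^k\partial_k\dot u^j\,\partial_jP\,dx=-\int P\,\partial_ju^k\partial_k\dot u^j\,dx+\int u\cdot\nabla P\,\div\dot u\,dx,
\end{equation*}
so $J_2=\int\div\dot u\,\dot P\,dx-\int P\,\partial_ju^k\partial_k\dot u^j\,dx$. Since $\div\dot u=\partial_iu^j\partial_ju^i$,
\begin{equation*}
\int\partial_iu^j\partial_ju^i\,\dot P\,dx=\frac{d}{dt}\int P\,\partial_iu^j\partial_ju^i\,dx-2\int P\,\partial_iu^j\partial_j\dot u^i\,dx+2\int P\,\partial_iu^j\partial_ju^k\partial_ku^i\,dx.
\end{equation*}
Hence
\begin{equation*}
J_2\le\frac{d}{dt}\int P\,\partial_iu^j\partial_ju^i\,dx+C\int|P||\nabla u||\nabla\dot u|\,dx+C\int|P||\nabla u|^3dx,
\end{equation*}
with neither $u$ nor $\nabla P$ left. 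This is the L\"u--Shi--Zhong bound the paper invokes for its $J_3$.

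Then \eqref{l1} gives $\|P\|_{L^4}^4+\|\nabla u\|_{L^4}^4\le C\|\rho\|_{L^2}^2\|\sqrt\rho\dot u\|_{L^2}^4$, using $\|P\|_{L^4}\le C\|\nabla P\|_{L^{4/3}}\le C\|\rho\dot u\|_{L^{4/3}}$ directly. Your intermediate claim $\|P\|_{L^2}\lesssim\|\rho\dot u\|_{L^{4/3}}$ is inconsistent with scaling: $L^{4/3}$ data put $P$ in $L^4$, not $L^2$.

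The resulting inequality $\frac{d}{dt}B+C_0\|\nabla\dot u\|_{L^2}^2\le C\|\sqrt\rho\dot u\|_{L^2}^4$ has no linear forcing term. Multiplying by $t^i$ and using Lemma \ref{lem3.3} closes it for $i=1,2$ with $T$-independent constants.
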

\begin{proof}
First, operating $\pa_t+
u\cdot\nabla $ to $ (\ref{NS})_2^j, j=1,2$   yields that
\begin{align}
&\p_t(\n   \dot u^1)+u\cdot\nabla(\n  \dot u^1)+\p_t\p_1P+u\cdot\nabla\p_1P=
\p_t\p_2\big(\mu(\rho)\o\big)+u\cdot\nabla\p_2\big(\mu(\rho)\o\big),\la{a4.6}\\
&\p_t(\n   \dot u^2)+u\cdot\nabla(\n  \dot u^2)+\p_t\p_2P+u\cdot\nabla\p_2P=
-\p_t\p_1\big(\mu(\rho)\o\big)-u\cdot\nabla\p_1\big(\mu(\rho)\o\big) .\la{a4.7}
\end{align}
Then, multiplying \eqref{a4.6} and \eqref{a4.7} by $\dot u^1$ and $\dot u^2$, respectively, combining those results together we obtain after integration by parts

\begin{equation}\label{3.15}
\begin{split}
\frac12\frac{d}{dt}\int\rho|\dot{ u }|^{2}dx&=\int\Big(\pl_t\pl_2\big(\mu(\rho)\omega\big)\dot{u}^1-\pl_t\pl_1\big(\mu(\rho)\omega\big)\dot{u}^2\Big) dx\\
&\quad+\int\Big( u\cdot\na\pl_2\big(\mu(\rho)\omega\big)\dot{u}^1-u\cdot\na\pl_1\big(\mu(\rho)\omega\big)\dot{u}^2\Big)dx\\
&\quad-\int\Big(\dot{u}^j \partial_{t}\partial_{j}P +\dot{u}^j u \cdot\nabla \partial_{j}P\Big) dx\triangleq\sum\limits_{i=1}^{3}J_i.
\end{split}
\end{equation}
It follows from integration by parts that
$$
\begin{aligned}
J_1 & =-\int\pl_t\big(\mu(\rho)\omega\big)\pl_2\dot{u}^1-\pl_t\big(\mu(\rho)\omega\big)\pl_1\dot{u}^2dx\\
&=-\int \pl_t\big(\mu(\rho)\omega\big)\left(\na^\bot\cdot\dot{u}\right) dx \\
& = -\int \pl_t\mu(\rho)\omega\na^\bot\cdot\dot{u}+\mu(\rho)\left(\na^\bot\cdot u_t\right)\na^\bot\cdot\dot{u}dx,
\end{aligned}
$$
and
$$
\begin{aligned}
J_2 & =\int -\pl_2u\cdot\na\big(\mu(\rho)\omega\big)\dot{u}^1-u\cdot\na\big(\mu(\rho)\omega\big)\pl_2\dot{u}^1
+\pl_1u\cdot\na\big(\mu(\rho)\omega\big)\dot{u}^2dx\\
&\quad+\int u\cdot\na\big(\mu(\rho)\omega\big)\pl_1\dot{u}^2dx \\
& = \int \mu(\rho)\omega\pl_2u\cdot \na\dot{u}^1-\mu(\rho)\omega\pl_1u \cdot \na\dot{u}^2dx-\int u\cdot\na\big(\mu(\rho)\omega\big)\na^\bot\cdot\dot{u}dx\\
& \leq C\|\nabla\dot{ u }\|_{L^2}\|\nabla u\|_{L^4}^2-\int u\cdot\na\mu(\rho)\omega\na^\bot\cdot\dot{u}
+\mu(\rho)\left(u\cdot\na\na^\bot\cdot u\right)\na^\bot\cdot\dot{u}dx\\
&\leq \varepsilon\|\nabla\dot{ u }\|_{L^2}^{2}+C_\varepsilon\|\nabla u\|_{L^4}^4-\int u\cdot\na\mu(\rho)\omega\na^\bot\cdot\dot{u}+\mu(\rho) \left(\na^\bot\cdot\big(u\cdot\na u\big)\right)\na^\bot\cdot\dot{u}dx,
\end{aligned}
$$
where we used $u\cdot\nabla\na^\bot\cdot u=\na^\bot\cdot(u\cdot\na u)$. Similar to article \cite{Lv2018} Lemma 3.3, we have
%
$$
\begin{aligned}
J_3 \leq & \frac{d}{dt}\int P\partial_{j}u^{i}\partial_{i}u^{j}dx
+C\int|P||\nabla\dot{ u }||\nabla u |dx
+C\int |P||\nabla u |^{3}dx \notag \\
\leq & \frac{d}{dt}\int P\partial_{j}u^{i}\partial_{i}u^{j}dx
+C_\varepsilon \left(\|P\|_{L^4}^{4}+\|\nabla u \|_{L^4}^{4}\right)
+\varepsilon\|\nabla\dot{ u }\|_{L^2}^{2}.
\end{aligned}
$$

Then making use of Lemma \ref{lem33}, we have
\begin{align}\label{plm}
\|\nabla \dot{u}\|_{L^2}\leq C\|\na^\bot\cdot\dot{u}\|_{L^2}+C\|\div\dot{u}\|_{L^2}\leq C\|\na^\bot\cdot\dot{u}\|_{L^2}+C\|\nabla u\|_{L^4}^2.
\end{align}
Substituting $J_1$-$J_3$ into \eqref{3.15}, choosing $\varepsilon$ is sufficiently small and combining with \eqref{plm} give
\be\ba\label{i47}
\Psi'(t)+C_0\int|\nabla\dot{ u }|^{2}dx\le C\|P\|_{L^4}^{4}+C\|\nabla u
\|_{L^4}^{4},
\ea\ee
where
\begin{equation*}
\Psi(t)\triangleq\frac12\int\rho|\dot{u}|^{2}dx
-\int P\partial_{j}u^{i}\partial_{i}u^{j}dx,
\end{equation*}
satisfies \be\la{psi1} \frac14\int\n |\dot u|^2dx-C\|\na u\|_{L^2}^4\le \Psi(t)\le \int\n |\dot u|^2dx+C\|\na u\|_{L^2}^4,\ee due to \eqref{paz} and \eqref{zaqa}.
Moreover, it follows from Sobolev's inequality, \eqref{2dumqa} and \eqref{3.1} that
\begin{equation}\label{l1}\ba
\|P\|_{L^4}^{4}+\|\nabla u \|_{L^4}^{4}
&\leq C\|P\|_{L^4}^{4}+C\|\mu(\rho)\omega \|_{L^4}^{4}\\
&\leq C\|\nabla P\|_{L^{4/3}}^{4}+ C\|\nabla\big(\mu(\rho)\o\big)\|_{L^{4/3}}^{4}\\
&\leq C\|\rho\dot{ u }\|_{L^{4/3}}^{4}
\leq C\|\rho\|_{L^2}^{2}\|\sqrt{\rho}\dot{ u }\|_{L^2}^{4}
\leq C\|\sqrt{\rho}\dot{ u }\|_{L^2}^{4}.\ea
\end{equation}
Multiplying \eqref{i47} by $t^i (i=1,2)$ and using \eqref{psi1}-\eqref{l1},  we obtain \eqref{133} from Gr\"onwall's inequality and \eqref{0.11}-\eqref{3.5}.

Finally, for any $p\in(2,\infty)$ we find
\begin{align}\label{zxza}
\|\nabla u\|_{L^p}\leq C\|\mu(\rho)\o\|_{L^p}\leq C\|\mu(\rho)\o\|_{L^2}^\frac{2}{p}\|\nabla\big(\mu(\rho)\o\big)\|_{L^2}^\frac{p-2}{p}
\leq C\|\nabla u\|_{L^2}^\frac{2}{p}\|\sqrt\rho\dot u\|_{L^2}^\frac{p-2}{p},
\end{align}
which together with \eqref{3.5}, \eqref{paz} and \eqref{133} yields \eqref{i313}. The proof of Lemma \ref{lem3.4} is finished.
\end{proof}
The following spatial weighted estimate on the density plays an important role in deriving the estimate $\|\nabla\rho\|_{L^q}$.
\begin{lemma}\label{lem03.6}
There exists a positive constant $C$ depending only on $a$, $q$, $\underline\mu$, $\bar{\mu'}$, $\|\rho_0\|_{L^\infty}$, $\|\n_0\bar x^a\|_{L^1}$, $\|\sqrt{\n_0} u_0\|_{L^2}$, $\|\na u_0\|_{L^2}$ and $N_0$ such that
\begin{equation}\label{06.1}
\sup_{t\in[0,T]}\|\rho\bar{x}^{a}\|_{L^{1}}+\int_{0}^{T}\left(\|\rho\dot{ u }\|_{L^q}^{\frac{q+1}{q}}
    +t\|\rho\dot{ u }\|_{L^q}^{2}\right)dt\leq C(T).
\end{equation}
\end{lemma}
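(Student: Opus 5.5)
The argument has two parts: the weighted $L^1$ bound for $\rho\bar x^a$, and the bound on $\rho\dot u$ in $L^q$. The second part uses the first through Lemma \ref{lemma2.6}.

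\textbf{Weighted density estimate.} Multiply \eqref{NS}$_1$ by $\bar x^a$ and integrate. Since $\div u=0$ this gives
$$\frac{d}{dt}\int\rho\bar x^a dx=\int\rho u\cdot\nabla\bar x^a dx\le C\int\rho\bar x^{a-1}\log^{1+\eta_0}(e+|x|^2)|u|dx .$$
Using $|\nabla\bar x|\le C\log^{1+\eta_0}(e+|x|^2)$, we split $\rho\bar x^{a-1}|u|$ as $(\rho\bar x^a)^{1-\theta}\cdot\rho^{\theta}\bar x^{-\delta}|u|\cdot(\text{log factor})$. The log factor is absorbed by a small power of $\bar x$. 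Hölder and \eqref{2.h10} (or Lemma \ref{1leo}) then bound the right side by $C(1+\|\rho\bar x^a\|_{L^1})\,(\|\sqrt\rho u\|_{L^2}+\|\nabla u\|_{L^2})$.

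For this we need a lower bound on mass in a fixed ball. Following \cite{Lv2018}, take a cutoff $\varphi_{N}$ and use \eqref{NS}$_1$:
$$\Big|\frac{d}{dt}\int\rho\varphi_N\,dx\Big|\le CN^{-1}\|\sqrt\rho\|_{L^2}\|\sqrt\rho u\|_{L^2}\le CN^{-1}.$$
Choosing $N_1=2N_0+CT$ yields \eqref{rjg}, i.e. $\int_{B_{N_1}}\rho\,dx\ge 1/4$ on $[0,T]$. Since $\|\nabla u\|_{L^2}$ is bounded by \eqref{3.5}, Gronwall gives $\sup_t\|\rho\bar x^a\|_{L^1}\le C(T)$.

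\textbf{Estimate of $\rho\dot u$ in $L^q$.} Lemma \ref{lemma2.6} now applies with $M_1=1/4$, $M_2$ from \eqref{3.1}, $M_3$ from the first step, and $v=\dot u$:
$$\|\rho\dot u\|_{L^{r}}\le C(\|\sqrt\rho\dot u\|_{L^2}+\|\nabla\dot u\|_{L^2})\quad\text{for every } r\in(1,\infty).$$
Using only this directly is too crude near $t=0$, since $\nabla\dot u$ is integrable only with weight $t$. So we interpolate instead. By Hölder,
$$\|\rho\dot u\|_{L^q}\le C\|\rho\dot u\|_{L^2}^{2(q-1)/(q^2-2)}\|\rho\dot u\|_{L^{q^2}}^{q(q-2)/(q^2-2)},$$
and combined with the bound above this gives
$$\|\rho\dot u\|_{L^q}\le C\|\sqrt\rho\dot u\|_{L^2}+C\|\sqrt\rho\dot u\|_{L^2}^{2(q-1)/(q^2-2)}\|\nabla\dot u\|_{L^2}^{q(q-2)/(q^2-2)}.$$

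\textbf{The time integrals.} For $t\|\rho\dot u\|_{L^q}^2$, bound by $Ct(\|\sqrt\rho\dot u\|_{L^2}^2+\|\nabla\dot u\|_{L^2}^2)$. Its integral is finite by \eqref{3.5} and \eqref{133} with $i=1$.

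For the $(q+1)/q$ power, write the $\nabla\dot u$ term with weights $t^{-1/2}$ and $t^{1/2}$ and apply Hölder in time. The factor $\int t\|\nabla\dot u\|_{L^2}^2$ is finite by \eqref{133}. For the $\sqrt\rho\dot u$ factor we use $\int\|\sqrt\rho\dot u\|^2_{L^2}dt\le C$ together with $\sup_t t\|\sqrt\rho\dot u\|_{L^2}^2\le C$. The remaining powers of $t$ are then negative with exponent less than $1$, since the choice $(q+1)/q<2$ is tuned for exactly this. So they are integrable on $(0,T)$, giving $\int_0^T\|\rho\dot u\|_{L^q}^{(q+1)/q}dt\le C(T)$.

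The main obstacle is this exponent bookkeeping near $t=0$, i.e. confirming that the $t$-exponent stays above $-1$. If the simple split fails, I would instead use the $L^2$ part only through $\int\|\sqrt\rho\dot u\|^2$ plus Young's inequality, and combine it with the $t^2$ weighted bounds from \eqref{133}.
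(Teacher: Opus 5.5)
Your proof is correct. From the mass lower bound onward it follows the paper's proof exactly: the cutoff $\varphi_N$, Lemma \ref{lemma2.6} applied to $v=\dot u$, the H\"older interpolation between $L^2$ and $L^{q^2}$, and the time integration.

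The real difference is in the weighted bound for $\rho\bar x^a$. The paper uses no weighted Sobolev inequality there. It applies Cauchy--Schwarz against the energy:
\[
\int\rho|u|\bar x^{a-1}\log^{1+\eta_0}(e+|x|^2)\,dx\le\Big(\int\rho\,\bar x^{2a-2}\log^{2(1+\eta_0)}(e+|x|^2)\,dx\Big)^{1/2}\|\sqrt\rho u\|_{L^2}\le C\Big(\int\rho\,\bar x^{a}\,dx\Big)^{1/2},
\]
where $a<2$ makes $\bar x^{a-2}\log^{2(1+\eta_0)}(e+|x|^2)$ bounded. This needs only \eqref{0.11}, so the weighted bound is obtained independently of the mass-concentration argument. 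Your split through \eqref{2.h10} and \eqref{3.12} also works: take $\delta=(2+\varepsilon)\theta$ with $\theta<1/(a+2+\varepsilon)$, so that the logarithm is absorbed. It costs more, though. You must first prove \eqref{rjg}, then invoke the Poincar\'e-type inequality, and you also use $\|\nabla u\|_{L^2}$. What it buys is that it never uses $a<2$, so it would work for any $a>0$.

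You left the exponent bookkeeping open; the simple split does close. Write the second term as $\|\sqrt\rho\dot u\|_{L^2}^{\gamma}\,(t\|\nabla\dot u\|_{L^2}^2)^{\beta/2}\,t^{-\beta/2}$, with $\gamma=\frac{2(q^2-1)}{q(q^2-2)}$ and $\beta=\frac{q^2-q-2}{q^2-2}$, so that $\gamma+\beta=\frac{q+1}{q}$. H\"older in time with exponents $\frac{2}{\gamma},\frac{2}{\beta},\frac{2q}{q-1}$ leaves $\int_0^T t^{-\frac{q(q^2-q-2)}{(q-1)(q^2-2)}}\,dt$. This is finite because $q(q^2-q-2)=q^3-q^2-2q<q^3-q^2-2q+2=(q-1)(q^2-2)$. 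Only $\int_0^T\|\sqrt\rho\dot u\|_{L^2}^2dt\le C$ from \eqref{3.5} and \eqref{133} with $i=1$ are needed. The bound on $\sup_t t\|\sqrt\rho\dot u\|_{L^2}^2$ and your fallback plan are unnecessary.
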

\begin{proof}
First, multiplying $\eqref{NS}_1$ by $\bar x^a $ and integrating the result equality over $\O,$ we obtain after integration by parts and using \eqref{0.11} that
\bnn \ba  \frac{d}{dt}\int\n \bar x^a  dx &\le C\int \n |u|\bar x^{a -1}\log^{1+\eta_0}(e+|x|^2)dx\\ &\le C\left(\int\n \bar x^{2a -2} \log^{2(1+\eta_0)}(e+|x|^2)dx\right)^{1/2}\left(\int\n u^2 dx\right)^{1/2}\\ &\le C\left(\int\n \bar x^{a } dx\right)^{1/2},\ea\enn
which together with Gr\"onwall's inequality gives
\be\la{o3.7} \sup_{0\le t\le T}\int\n \bar x^a  dx\le C(T) .\ee

Next, for $N>1,$ let $\varphi_N$   be a smooth function such that
\be\nonumber
0\le\varphi_N(x)\le 1,\quad \varphi_N=
\begin{cases} 1& \mbox{ if }\,|x|\le N,\\ 0& \mbox{ if }\,|x|\ge 2N,
\end{cases} \quad |\na \varphi_N|\le 2N^{-1}.
\ee
It follows from $(\ref{NS})_1,$ \eqref{3.1} and \eqref{0.11} that
\bnn\ba
\frac{d}{dt}\int \n \vp_N dx &=\int \n u \cdot\na \vp_N dx\\
&\ge - 2N^{-1}\left(\int\n dx\right)^{1/2}\left(\int\n |u|^2dx\right)^{1/2}\ge -2\ti C^{1/2}N^{-1},
\ea\enn
which gives
\bnn
\inf\limits_{0\le t\le T}\int \n \vp_N dx\ge \int \n_0 \vp_N dx-2\ti C^{1/2}N^{-1}T.
\enn
This combined with \eqref{rho1} yields that for  $N_1\triangleq 2(2+ N_0+8\ti C^{1/2}T)$,
\begin{align}\label{zzaz}
\inf\limits_{0\le t\le T}\int_{B_{N_1}  }\n  dx\ge \int \n \vp_{N_1/2}dx\ge 1/4.
\end{align}

Finally, it follows from H\"older inequality, \eqref{2.16}, \eqref{3.1}, \eqref{o3.7} and \eqref{zzaz} that
\begin{align}\label{4.6}
\|\rho\dot{ u }\|_{L^q}
 &  \leq C\|\rho\dot{ u }\|_{L^2}^{\frac{2(q-1)}{q^2-2}}
\|\rho\dot{ u }\|_{L^{q^2}}^{\frac{q(q-2)}{q^2-2}} \notag \\
 &  \leq C(T)\|\rho\dot{ u }\|_{L^2}^{\frac{2(q-1)}{q^2-2}}
\left(\|\sqrt{\rho}\dot{ u }\|_{L^2}+\|\nabla\dot{ u }\|_{L^2}\right)^{\frac{q(q-2)}{q^2-2}}  \\
 &  \leq C(T)\left(\|\sqrt{\rho}\dot{ u }\|_{L^2}
+\|\sqrt{\rho}\dot{ u }\|_{L^2}^{\frac{2(q-1)}{q^2-2}}
\|\nabla\dot{ u }\|_{L^2}^{\frac{q(q-2)}{q^2-2}}\right),\notag
\end{align}
which together with \eqref{3.5} and \eqref{133} leads to
\begin{equation}
\begin{split}\nonumber
 &  \int_{0}^{T}\left(\|\rho\dot{ u }\|_{L^q}^{\frac{q+1}{q}}
    +t\|\rho\dot{ u }\|_{L^q}^{2}\right)dt \\
 &  \leq C(T)\int_{0}^{T}\left(\|\sqrt{\rho}\dot{ u }\|_{L^2}^{2}
    +t\|\nabla\dot{ u }\|_{L^2}^{2}
    +t^{-\frac{q^{3}-q^{2}-2q-1}{q^{3}-q^{2}-2q}}+1\right)dt  \\
 &  \leq C(T).
\end{split}
\end{equation}
This finishes the proof of Lemma \ref{lem03.6}.
 \end{proof}

Now we are in a position to prove Proposition \ref{aupper}.

{\it\bf Proof of Proposition \ref{aupper}.}
First, one gets from Beale-Kato-Majda-type inequality, \eqref{2dumqa}, \eqref{2duqa} and \eqref{3.5} that 
\begin{equation}
\begin{split}\label{zaq}
&\|\na u\|_{L^\infty}\\
&\le C\left(\|{\rm div}u\|_{L^\infty }+ \|\o\|_{L^\infty }
\right)\log(e+\|\na^2 u\|_{L^q})+C\|\na u\|_{L^2 } +C \\
&\leq C\|\mu(\rho)\o\|_{L^2}^{\frac{q-2}{2(q-1)}}\|\nabla\big(\mu(\rho)\o\big)\|_{L^q}^{\frac{q}{2(q-1)}}\log(e+\|\rho\dot u\|_{L^q}+\|\nabla\rho\|_{L^q}+\|\nabla u\|_{L^2})+C\\
&\leq C\|\rho\dot u\|_{L^q}^{\frac{q}{2(q-1)}}\log(e+\|\rho\dot u\|_{L^q}+\|\nabla\rho\|_{L^q})+C.
\end{split}
\end{equation}

Next, it follows from the mass equation \eqref{NS}$_1$ that $\na\n$ satisfies 
\begin{equation}
\begin{split}\label{zaq1}
&\frac{d}{dt}\log(e+\|\nabla\rho\|_{L^q})\\
&\leq \|\nabla u\|_{L^\infty}\\
&\leq C\|\rho\dot u\|_{L^q}^{\frac{q}{2(q-1)}}\log(e+\|\rho\dot u\|_{L^q}+\|\nabla\rho\|_{L^q})+C\\
&\leq C\|\rho\dot u\|_{L^q}^{\frac{q}{2(q-1)}}\log(e+\|\nabla\rho\|_{L^q})+C\|\rho\dot u\|_{L^q}^{\frac{q}{2(q-1)}}\log(e+\|\rho\dot u\|_{L^q})+C,
\end{split}
\end{equation}
it from \eqref{06.1} implies
\be \la{nu1}\int_0^T\left(\|\rho\dot u\|_{L^q}^{\frac{q}{2(q-1)}}+\|\rho\dot u\|_{L^q}^{\frac{q}{2(q-1)}}\log(e+\|\rho\dot u\|_{L^q})\right)dt\le C(T), \nonumber\ee
which together with \eqref{zaq1} and Gr\"onwall's inequality lead to
$$\sup_{t\in[0, T]}\|\nabla\rho\|_{L^q}\leq C(T).$$
Furthermore, which combining with \eqref{zaq} yields
$$
\int_0^T\|\na u\|_{L^\infty}dt\le C(T).
$$
We complete the proof of Proposition \ref{aupper}.
\subsection{\la{se4}A priori estimates (II): higher order estimates}
\begin{lemma}\label{lem3.5}
There exists a positive constant $C$ depending only on $a$, $q$, $\underline\mu$, $\bar{\mu'}$, $\|\rho_0\|_{L^1\cap H^1\cap W^{1,q}}$, $\|\n_0\bar x^a\|_{L^1}$, $\|\sqrt{\n_0} u_0\|_{L^2}$, $\|\na u_0\|_{L^2}$ and $N_0$ such that
\begin{equation}
\begin{split}\label{16.1}
\sup_{t\in[0,T]}\|\rho\|_{H^{1}}
&+\int_{0}^{T}\left(\|\nabla^{2} u \|_{L^2}^2+\|\nabla^{2} u \|_{L^q}^{\frac{q+1}{q}}
+t\|\nabla^{2} u \|_{L^2 \cap L^q}^2\right)dt \\
&+\int_{0}^{T}\left(\|\nabla P\|_{L^2}^2+\|\nabla P \|_{L^q}^\frac{q+1}{q}+t\|\nabla P\|_{L^2\cap L^q}^2\right)dt
\leq C(T).
\end{split}
\end{equation}
\end{lemma}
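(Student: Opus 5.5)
The plan has three steps: bound $\|\rho\|_{H^1}$ by transport, then use the Stokes estimates of Lemma \ref{stokes} with $F=-\rho\dot u$ to control $\nabla^2 u$ and $\nabla P$, and finally integrate in time using the earlier lemmas.

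\textbf{Step 1: $\|\rho\|_{H^1}$.} The $L^2$ part is \eqref{3.1}. For the gradient, I apply $\nabla$ to $\eqref{NS}_1$ and test with $|\nabla\rho|^{p-2}\nabla\rho$ for $p=2$. Since $\div u=0$, this gives
$$\frac{d}{dt}\|\nabla\rho\|_{L^2}\le \|\nabla u\|_{L^\infty}\|\nabla\rho\|_{L^2}.$$
Gr\"onwall's inequality together with $\int_0^T\|\nabla u\|_{L^\infty}dt\le C(T)$ from Proposition \ref{aupper} then yields $\sup_t\|\nabla\rho\|_{L^2}\le C(T)$. This is why the constant depends on $\|\rho_0\|_{H^1}$.

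\textbf{Step 2: the elliptic estimates.} I view $\eqref{NS}_2$ as the Stokes system \eqref{3rd1} with $F=-\rho\dot u$. Proposition \ref{aupper} gives $\|\nabla\mu(\rho)\|_{L^q}\le \bar{\mu'}\|\nabla\rho\|_{L^q}\le C(T)$, so Lemma \ref{stokes} applies with $r=2$ and $r=q$, both in $[2q/(q+2),q]$. Using \eqref{2dumqa}, \eqref{2duqa} and \eqref{3.5}, I expect
$$\|\nabla^2u\|_{L^2}+\|\nabla P\|_{L^2}\le C(T)\big(\|\sqrt\rho\dot u\|_{L^2}+1\big),\qquad \|\nabla^2u\|_{L^q}+\|\nabla P\|_{L^q}\le C(T)\big(\|\rho\dot u\|_{L^q}+1\big).$$
In the first bound the factor $\|\rho\|_{L^\infty}^{1/2}$ has been absorbed into the constant. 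The $\nabla P$ parts follow directly from \eqref{2dumqa}.

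\textbf{Step 3: time integration.} I square the $L^2$ bound and integrate, using $\int_0^T\|\sqrt\rho\dot u\|_{L^2}^2dt\le C$ from \eqref{3.5}; this controls $\int_0^T\|\nabla^2u\|_{L^2}^2+\|\nabla P\|_{L^2}^2\,dt$. Multiplying by $t$ and using \eqref{133} (with $i=1$), or simply $t\le T$ with \eqref{3.5}, handles the $t$-weighted $L^2$ terms. For the $L^q$ norms, I raise the second bound to the power $(q+1)/q$, and separately square it and multiply by $t$. Then \eqref{06.1} controls $\int_0^T\|\rho\dot u\|_{L^q}^{(q+1)/q}+t\|\rho\dot u\|_{L^q}^2\,dt$, and the additive constants contribute at most $C(T)$.

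The main obstacle is already resolved by Proposition \ref{aupper}. It is the uniform bound on $\|\nabla\mu(\rho)\|_{L^q}$, which is what the second-derivative estimate \eqref{2duqa} requires. The only delicate check is that the exponents in \eqref{2duqa} remain finite for $r=2$ and $r=q$, which they do since $q>2$.
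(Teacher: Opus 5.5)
Your proposal is correct and follows the paper's proof essentially step by step. You run the $L^2$ transport estimate for $\nabla\rho$, closed by $\int_0^T\|\nabla u\|_{L^\infty}dt\le C(T)$ from Proposition \ref{aupper}; you then apply the Stokes bounds \eqref{2dumqa}--\eqref{2duqa} with $F=-\rho\dot u$, justified by $\|\nabla\mu(\rho)\|_{L^q}\le C(T)$; and you integrate in time using \eqref{3.5} and \eqref{06.1}. Your bookkeeping is in fact slightly cleaner than the paper's: the paper writes $C$ rather than $C(T)$ in \eqref{4.8} and \eqref{paztg}, and has $\|\sqrt\rho\dot u\|_{L^q}$ where $\|\rho\dot u\|_{L^q}$ is meant.
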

\begin{proof}
First, in a similar way to \eqref{zaq1}, we have
\begin{equation}\label{4.8}
\sup_{t\in[0,T]}\|\nabla\rho\|_{L^2}\leq C.
\end{equation}

Next, it follows from \eqref{NS}$_2$, \eqref{2dumqa}, \eqref{2duqa}, \eqref{3.1} and \eqref{3.5} that
\begin{equation}\label{paztg}
\begin{split}
\|\nabla^2 u\|_{L^2}+\|\nabla P\|_{L^2}&\leq C\|\sqrt\rho\dot{u}\|_{L^2}+C\|\nabla u\|_{L^2}\leq C\|\sqrt\rho\dot{u}\|_{L^2}+C,\\
\|\nabla^2 u\|_{L^q}+\|\nabla P\|_{L^q}&\leq C\|\sqrt\rho\dot{u}\|_{L^q}+C\|\nabla u\|_{L^2}\leq C\|\sqrt\rho\dot{u}\|_{L^q}+C,
\end{split}
\end{equation}
which together with \eqref{3.5} and \eqref{06.1} yields that
$$
\begin{aligned}
& \int_{0}^{T}\left(\|\nabla^2 u \|_{L^2}^2+\|\nabla^2 u \|_{L^q}^\frac{q+1}{q}+t\|\nabla^{2} u \|_{L^2 \cap L^q}^2\right)dt \notag \\
&\quad+\int_0^T\left(\|\nabla P\|_{L^2}^2+\|\nabla P \|_{L^q}^\frac{q+1}{q}+t\|\nabla P\|_{L^2\cap L^q}^2\right)dt\leq C.
\end{aligned}
$$
Which combining with \eqref{3.1}, \eqref{4.8} gives \eqref{16.1} and completes the proof of Lemma \ref{lem3.5}.
\end{proof}
\begin{lemma}\label{lem3.6}
There exists a positive constant $C$ depending only on $a$, $q$, $\underline\mu$, $\bar{\mu'}$, $\|\n_0\bar x^a\|_{L^1\cap H^1\cap W^{1,q}}$, $\|\sqrt{\n_0} u_0\|_{L^2}$, $\|\na u_0\|_{L^2}$ and $N_0$ such that
\begin{equation}\label{6.1}
\sup_{t\in[0,T]}\|\rho\bar{x}^{a}\|_{L^1\cap H^1\cap W^{1,q}}\leq C(T).
\end{equation}
\end{lemma}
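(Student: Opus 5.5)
The $L^1$ part of \eqref{6.1} is already contained in \eqref{06.1}, so it remains to bound $\rho\bar x^a$ in $H^1\cap W^{1,q}$. The plan is to derive a transport equation for $\na(\rho\bar x^a)$ and close it with Gr\"onwall's inequality, using $\int_0^T\|\na u\|_{L^\infty}dt\le C(T)$ from Proposition \ref{aupper}. Set $w\triangleq\rho\bar x^a$. Since $\div u=0$, equation \eqref{NS}$_1$ gives
$$
w_t+u\cdot\na w=a\,w\,u\cdot\na\log\bar x .
$$
From the definition \eqref{xct}, $|\na\bar x|\le C\bar x^{-1}\log^{1+\eta_0}(e+|x|^2)\le C\bar x^{\delta}$ for any small $\delta>0$, and $|\na^2\log\bar x|\le C\bar x^{-2}\log^{C}(e+|x|^2)$.

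First we bound $\|w\|_{L^p}$ for $p\in\{2,q\}$. Multiply the equation by $|w|^{p-2}w$ and integrate; the transport term vanishes. The source is bounded by
$$
C\int |w|^p|u|\bar x^{-1}\log^{1+\eta_0}(e+|x|^2)dx .
$$
The key point is that $|u|\bar x^{-1}\log^{1+\eta_0}$ is not bounded pointwise. I would write $|u|\bar x^{-1}\log^{1+\eta_0}=(|u|\bar x^{-\delta})\bar x^{\delta-1}\log^{1+\eta_0}\le C|u|\bar x^{-\delta}$, then use \eqref{2.h10} together with \eqref{3.12} and \eqref{zzaz} to get $\|u\bar x^{-\delta}\|_{L^{(2+\varepsilon)/\delta}}\le C(\|\sqrt\rho u\|_{L^2}+\|\na u\|_{L^2})\le C$. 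H\"older then leaves a factor $\|w\|_{L^{p s}}$ with $s$ slightly above $1$. Rather than closing at the $L^p$ level, I would handle this together with the gradient by Gagliardo--Nirenberg, using $\|w\|_{L^\infty}\le C\|w\|_{L^1}^{\theta}\|\na w\|_{L^q}^{1-\theta}$. A simpler route is available: $|w|\le \bar\rho\bar x^a$ locally and $w\in L^1$, so $\|w\|_{L^p}\le \|w\|_{L^1}^{1/p}\|w\|_{L^\infty}^{1-1/p}$, and $\|w\|_{L^\infty}$ is controlled by $\|w\|_{L^1}$ and $\|\na w\|_{L^q}$ via Lemma \ref{lem22}. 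So the whole argument reduces to estimating $\|\na w\|_{L^q}$ and $\|\na w\|_{L^2}$ linearly.

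Next, differentiate the equation:
$$
\pa_t\na w+u\cdot\na\na w=-\na u\cdot\na w+a\na w\,(u\cdot\na\log\bar x)+a w\,\na(u\cdot\na\log\bar x).
$$
Multiplying by $|\na w|^{p-2}\na w$ gives
$$
\frac{d}{dt}\|\na w\|_{L^p}\le C\big(\|\na u\|_{L^\infty}+\|u\cdot\na\log\bar x\|_{L^\infty}\big)\|\na w\|_{L^p}+C\|w\,|\na u|\,|\na\log\bar x|\|_{L^p}+C\|w\,|u|\,|\na^2\log\bar x|\|_{L^p}.
$$
This step is the \textbf{main obstacle}: $\|u\cdot\na\log\bar x\|_{L^\infty}$. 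We have $|u|\,|\na\log\bar x|\le C|u|\bar x^{-1}\log^{1+\eta_0}(e+|x|^2)$. I would bound its $L^\infty$ norm by Gagliardo--Nirenberg applied to $g=u\bar x^{-1}\log^{1+\eta_0}$ (more precisely, with a slightly smaller power of $\bar x$). The $L^s$ norm of $g$ is controlled by \eqref{2.h10}. Its gradient in $L^q$ is controlled by $\|\na u\|_{L^q}+\|u\bar x^{-\delta}\|_{L^{r}}$, and $\|\na u\|_{L^q}\le C\|\na u\|_{L^2}+C\|\na^2 u\|_{L^2}$, which is $L^2$ in time by \eqref{16.1}. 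Hence $\int_0^T\|u\cdot\na\log\bar x\|_{L^\infty}dt\le C(T)$.

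For the last two forcing terms, bound $|\na\log\bar x|$ and $|\na^2\log\bar x|$ by constants, which gives $\|w\na u\|_{L^p}\le\|w\|_{L^\infty}\|\na u\|_{L^p}$ and $\|w\,u\,\bar x^{-1}\log^{C}\|_{L^p}\le\|w\|_{L^\infty}\|g\|_{L^p}$. With $\|w\|_{L^\infty}\le C(1+\|w\|_{L^1}+\|\na w\|_{L^q})$ and $\|w\|_{L^1}\le C(T)$ by \eqref{06.1}, everything is linear in $Y\triangleq\|\na w\|_{L^2}+\|\na w\|_{L^q}$, with coefficients in $L^1(0,T)$ (using \eqref{b3.56} and \eqref{16.1}). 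Gr\"onwall's inequality then yields $\sup_t Y\le C(T)$, and the $L^2$ and $L^q$ bounds of $w$ follow by interpolation as described above, proving \eqref{6.1}.
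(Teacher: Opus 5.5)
Your proposal is correct and is essentially the paper's own proof. The shared ingredients are the transport equation for $w=\rho\bar x^a$ and its gradient, the weighted bound $\|u\bar x^{-\delta}\|_{L^\infty}\le C(1+\|\na u\|_{L^q})$ (the paper's \eqref{a4.22}) to control $\|u\cdot\na\log\bar x\|_{L^\infty}$, the bound $\|w\|_{L^\infty}\le C(1+\|\na w\|_{L^q})$ from the $L^1$ estimate \eqref{06.1}, and a linear Gr\"onwall argument with coefficients integrable by \eqref{b3.56} and \eqref{16.1}; the paper runs $r=q$ first and then $r=2$, while you do both at once.

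The one slip is in the term $w\,u\cdot\na^2\log\bar x$ for $p=2$. There the split $\|w\|_{L^\infty}\|g\|_{L^2}$ with $|g|\sim|u|\bar x^{-1}\log^{C}(e+|x|^2)$ is not available, because in 2D $|u|(e+|x|^2)^{-1/2}$ need not lie in $L^2$. The fix is easy and uses only your own ingredients. Either keep the full decay $|\na^2\log\bar x|\le C(e+|x|^2)^{-1}$ and bound by $\|w\|_{L^\infty}\|u\bar x^{-\delta}\|_{L^\infty}\|\bar x^{\delta}(e+|x|^2)^{-1}\|_{L^2}$, as the paper does with $\|u\bar x^{-1/4}\|_{L^\infty}\|\bar x^{-3/2}\|_{L^r}$. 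Or put $w$ in $L^2$ and $u\bar x^{-\delta}$ in $L^\infty$.
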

\begin{proof}
First, it follows from \eqref{2.h10}, \eqref{3.12}, \eqref{3.1}-\eqref{3.5} and \eqref{zzaz} that for any $\ve\in(0,1)$ and $\de\in(0,1)$
 \be\la{a4.21} \|u\bar x^{-\de} \|_{L^{(2+\varepsilon)/\de}}\le C(\de, \varepsilon)\|\sqrt\rho u\|_{L^2}+C(\de, \varepsilon)\|\nabla u\|_{L^2}\le C(\de, \varepsilon).\ee
Direct calculations show that for $q>2$,
\bnn \ba  \|\na (u\bar x^{-\de})\|_{L^q}&\le C(\de)\|\na u\|_{L^q}+C(\de)
\|u \bar x^{-\delta}\|_{L^\infty}\|(e+|x|^2)^{-1/2}\|_{L^q}\\ &\le
C(\de)\|\na u\|_{L^q}+\frac12\|\na (u\bar x^{-\de})\|_{L^q}+C(\de)\|u\bar x^{-\de}\|_{L^{2.5/\de}},\ea\enn
which combined with \eqref{a4.21} implies
\be \la{a4.22}\|u \bar x^{-\de}\|_{L^\infty}\le C(\de) + C(\de)  \|\na u\|_{L^q} . \ee

Then, one derives from \eqref{NS}$_1$ that $v\triangleq\rho\bar{x}^a$ satisfies
\begin{equation}\label{A}
\partial_t v+ u \cdot\nabla v-av u \cdot\nabla\log\bar{x}=0.
\end{equation}
Taking the $x_i$-derivative on the both side of \eqref{A} gives
\begin{align}\label{B}
0=  &  \partial_{t}\partial_{i}v+ u \cdot\nabla\partial_{i}v
+\partial_{i} u \cdot\nabla v
-a\partial_{i}v u \cdot\nabla\log\bar{x} \notag \\
 &  -av\partial_{i} u \cdot\nabla\log\bar{x}
-av u \cdot\partial_{i}\nabla\log\bar{x}.
\end{align}
For any $r\in[2, q]$, multiplying \eqref{B} by $|\na v|^{r-2}\partial_{i}v$ and integrating the resulting equality over $\mathbb{R}^2$, we obtain after integration by parts that
\be\la{6.4}\ba
(\|\na v\|_{L^r} )_t& \le C(\|\na u\|_{L^\infty}+\|u\cdot \na \log \bar x\|_{L^\infty}) \|\na v\|_{L^r} \\
&\quad +C\|v\|_{L^\infty}\left( \||\na u||\na\log \bar x|\|_{L^r}+\||  u||\na^2\log \bar x|\|_{L^r}\right)\\
& \le C(1 +\|\na u\|_{W^{1,q}})  \|\na v\|_{L^r} \\
&\quad+C\|v\|_{L^\infty}\left(\|\na u\|_{L^r}+\|u\bar x^{-1/4}\|_{L^\infty}\|\bar x^{-3/2}\|_{L^r}\right) \\
& \le C(1 +\|\na^2u\|_{L^r}+\|\na u\|_{W^{1,q}})(1+ \|\na v\|_{L^r}+\|\na v\|_{L^q}),
\ea\ee
where in the second and the last inequalities, one has used \eqref{06.1} and \eqref{a4.22}.
Choosing $r=q$ in \eqref{6.4}, together with \eqref{16.1} thus shows
\begin{equation}\label{6.5}
\sup_{t\in[0,T]}\|\nabla(\rho\bar{x}^a)\|_{L^q}\leq C.
\end{equation}
Setting $r=2$ in \eqref{6.4}, we deduce from \eqref{16.1} and \eqref{6.5} that
\begin{equation}\nonumber
\sup_{t\in[0,T]}\|\nabla(\rho\bar{x}^a)\|_{L^2}\leq C.
\end{equation}
This combined with \eqref{6.5}, \eqref{06.1} gives \eqref{6.1} and finishes the proof of Lemma \ref{lem3.6}.   
\end{proof}

\begin{lemma}\label{lem3.8}
There exists a positive constant $C$ depending only on $a$, $q$, $\underline\mu$, $\bar{\mu'}$, $\|\n_0\bar x^a\|_{L^1\cap H^1\cap W^{1,q}}$, $\|\sqrt{\n_0} u_0\|_{L^2}$, $\|\na u_0\|_{L^2}$ and $N_0$ such that
\begin{equation}\label{7.1}
\sup_{t\in[0,T]}t\|\sqrt{\rho}u_t \|_{L^2}^{2}
+\int_{0}^{T}t\|\nabla u_t \|_{L^2}^2dt\leq C(T).
\end{equation}
\end{lemma}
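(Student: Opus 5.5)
Rather than differentiating the momentum equation in time and estimating $u_t$ directly, the plan is to get \eqref{7.1} almost for free from the bounds on the material derivative already in hand. Write
\[
u_t=\dot u-u\cdot\nabla u,
\]
so that
\[
\sqrt\rho u_t=\sqrt\rho\dot u-\sqrt\rho u\cdot\nabla u ,\qquad \nabla u_t=\nabla\dot u-\nabla(u\cdot\nabla u).
\]
By \eqref{133} with $i=1$, we already have
\[
\sup_t t\|\sqrt\rho\dot u\|_{L^2}^2+\int_0^T t\|\nabla\dot u\|_{L^2}^2dt\le C .
\]
It therefore suffices to bound the two convective terms:
\[
\sup_t t\|\sqrt\rho\, u\cdot\nabla u\|_{L^2}^2 \quad\text{and}\quad \int_0^T t\|\nabla(u\cdot\nabla u)\|_{L^2}^2dt .
\]

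For the first term, use $\|\sqrt\rho u\cdot\nabla u\|_{L^2}\le \|\rho\|_{L^\infty}^{1/2}\|\rho^{1/2}u\|_{L^4}\|\nabla u\|_{L^4}$, up to splitting the powers of $\rho$ suitably. Lemma \ref{lemma2.6} together with \eqref{zzaz}, \eqref{3.1}, \eqref{o3.7} gives $\|\rho^{1/2} u\|_{L^4}\le C(\|\sqrt\rho u\|_{L^2}+\|\nabla u\|_{L^2})\le C(T)$. Alternatively, write $\sqrt\rho |u|\le \rho^{1/2}\bar x^{a/2}\cdot |u|\bar x^{-a/2}$ and use \eqref{a4.21} with $\rho\bar x^a\in L^\infty$ from \eqref{6.1}. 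Then \eqref{i313} (or \eqref{zxza} with $p=4$) gives
\[
t\|\nabla u\|_{L^4}^2\le C t\|\nabla u\|_{L^2}\|\sqrt\rho\dot u\|_{L^2}\le C,
\]
using \eqref{3.5} and \eqref{133}.

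The second term, $\int_0^T t\|\nabla(u\cdot\nabla u)\|_{L^2}^2dt$, is the main obstacle, because $u$ itself is not in $L^\infty$ in the vacuum case: only the weighted quantity $u\bar x^{-\delta}$ is controlled. Split
\[
\nabla(u\cdot\nabla u)=\nabla u\cdot\nabla u+u\cdot\nabla^2u .
\]
For the first piece, $t\|\nabla u\|_{L^4}^4\le C t\|\sqrt\rho\dot u\|_{L^2}^2$ by \eqref{l1}, and this is integrable in time by \eqref{3.5}. For $u\cdot\nabla^2u$, a bound of $\|u\|_{L^\infty}$ is unavailable, so a weight is needed. The difficulty is that $\nabla u_t$ carries no weight. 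I would therefore replace the naive splitting by a more careful treatment of this term.

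First attempt: do not split $u_t$ at all. Instead, run the standard energy estimate obtained by differentiating \eqref{NS}$_2$ in $t$ and testing against $u_t$, namely
\[
\rho u_t+\rho u\cdot\nabla u-\nabla^\perp(\mu\omega)+\nabla P=0 .
\]
This produces
\[
\frac12\frac{d}{dt}\int\rho|u_t|^2+\int\mu(\rho)|\nabla^\perp\cdot u_t|^2
= -\int\rho_t\bigl(|u_t|^2/2+u\cdot\nabla u\cdot u_t\bigr)-\int\rho u_t\cdot\nabla u\cdot u_t-\int\partial_t\mu(\rho)\,\omega\,\nabla^\perp\cdot u_t .
\]
Substitute $\rho_t=-\div(\rho u)$, integrate by parts, and bound each term with Lemma \ref{lemma2.6} applied to $u$ and $u_t$. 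The latter is possible since $\|\sqrt\rho u_t\|_{L^2(B_{N_1})}$ is available, and $\nabla u_t$ is controlled through \eqref{dumm} since $\div u_t=0$. The viscosity term uses
\[
\partial_t\mu(\rho)=-u\cdot\nabla\mu(\rho),\qquad |\nabla\mu|\le\bar{\mu'}|\nabla\rho|\in L^q ,
\]
controlled by Proposition \ref{aupper}, together with the weighted bound \eqref{a4.22}:
\[
\Bigl|\int u\cdot\nabla\mu\,\omega\,\nabla^\perp\cdot u_t\Bigr|\le \|u\bar x^{-\delta}\|_{L^\infty}\|\bar x^{\delta}\nabla\rho\|_{L^q}\|\nabla u\|_{L^{2q/(q-2)}}\|\nabla u_t\|_{L^2}.
\]
Here $\bar x^\delta\nabla\rho$ is controlled via $\nabla(\rho\bar x^a)$ from \eqref{6.1} with $\delta<a$. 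After absorbing $\varepsilon\|\nabla u_t\|_{L^2}^2$, we arrive at
\[
\frac{d}{dt}\|\sqrt\rho u_t\|_{L^2}^2+\|\nabla u_t\|_{L^2}^2\le C(1+\|\nabla u\|_{W^{1,q}}^2+\|\nabla^2u\|_{L^2}^2)\|\sqrt\rho u_t\|_{L^2}^2+C(1+\|\nabla^2u\|_{L^2}^2+\|\nabla u\|_{W^{1,q}}^{(q+1)/q}).
\]
Then multiply by $t$ and apply Gr\"onwall. The right-hand coefficients are integrable by \eqref{16.1}. The extra term $\|\sqrt\rho u_t\|_{L^2}^2$ produced by $\frac{d}{dt}t$ is integrable since
\[
\|\sqrt\rho u_t\|_{L^2}\le\|\sqrt\rho\dot u\|_{L^2}+C\|\sqrt\rho u\|_{L^4}\|\nabla u\|_{L^4},
\]
and \eqref{3.5} applies. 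This yields \eqref{7.1}.
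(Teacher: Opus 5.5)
Once you drop the splitting $u_t=\dot u-u\cdot\nabla u$ (rightly, since $u\cdot\nabla^2u$ cannot be controlled in $L^2$ without a bound on $u$), your argument is the paper's. You differentiate \eqref{NS}$_2$ in $t$ and test with $u_t$ as in \eqref{7.2}--\eqref{7.3}. You handle $\partial_t\mu(\rho)\,\omega\,\omega_t$ through $\partial_t\mu(\rho)=-u\cdot\nabla\mu(\rho)$ and the weighted bound on $\bar x^{a}\nabla\rho$ from \eqref{6.1}, which is the paper's $K_4$. You multiply by $t$, and you control $\int_0^T\|\sqrt\rho u_t\|_{L^2}^2dt$ through $\dot u$ as in \eqref{jianew}. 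There is a harmless slip in your energy identity: the coefficient of $\int\rho_t|u_t|^2dx$ is $1$, not $1/2$. The reason is that $\int\rho u\cdot\nabla u_t\cdot u_t\,dx=\frac12\int\rho_t|u_t|^2dx$ cancels the $-\frac12\int\rho_t|u_t|^2dx$ produced by $\frac{d}{dt}$.

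The step that fails as written is the closing Gr\"onwall argument. Your final inequality puts $\|\nabla u\|_{W^{1,q}}^2$ in the coefficient of $\|\sqrt\rho u_t\|_{L^2}^2$, and you claim this is integrable by \eqref{16.1}. It is not: \eqref{16.1} controls only $\int_0^T\|\nabla^2u\|_{L^q}^{(q+1)/q}dt$ and $\int_0^Tt\|\nabla^2u\|_{L^q}^2dt$. For data $u_0\in D_{0,\sigma}^1$, parabolic scaling gives $\|\nabla^2u\|_{L^q}\sim t^{-1+1/q}$ as $t\to0$, so $\int_0^T\|\nabla^2u\|_{L^q}^2dt$ is in general infinite for $q>2$, and Gr\"onwall cannot be closed on $[0,T]$.

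You need to estimate more economically. The term $\int\rho u_t\cdot\nabla u\cdot u_t\,dx$ costs only $\|\nabla u\|_{L^\infty}$ to the first power, which lies in $L^1(0,T)$ by Proposition \ref{aupper}. Squaring \eqref{a4.22} costs only $1+\|\nabla u\|_{L^q}^2$, which lies in $L^1(0,T)$ by \eqref{zxza} and \eqref{3.5}. Cleaner still is the paper's choice: bound every factor involving $u$ by weighted norms that are uniformly bounded through \eqref{a4.21} and \eqref{5.d2}, for example $\|u\bar x^{-a}\|_{L^{4q/(q-2)}}\le C$. Then handle $\sqrt\rho u_t$ in $L^6$ via \eqref{5.d1}. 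This yields \eqref{11.3} with a constant coefficient.

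A related point concerns the viscous term. Your $L^\infty$ bound on $u\bar x^{-\delta}$ there produces the forcing $(1+\|\nabla u\|_{L^q}^2)\|\nabla u\|_{H^1}^2$, which is not among the terms you list. It is integrable only after multiplying by $t$, using $t\|\nabla u\|_{L^q}^2\le C(T)$ from \eqref{zxza}, \eqref{3.5} and \eqref{133}. The paper's $L^{4q/(q-2)}$ bound on $u\bar x^{-a}$ avoids this term altogether.
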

\begin{proof}
First, it follows from  \eqref{3h}, \eqref{2.h10}, \eqref{3.12}, \eqref{3.1} and \eqref {zzaz}
 that for $\ve\in(0,1)$ and $\eta>0,$ every $v\in \tilde D^{1,2}(\rr)$ satisfies
\begin{equation} \la{5.d1}
\begin{split}
\|\n^\eta v\|_{L^{(2+\ve)/\tilde\eta}}
&\leq \|\n^\eta {\bar x}^{a\eta}\|_{L^{2(2+\ve)/\tilde\eta}}\|v {\bar x}^{-a\eta}\|_{L^{2(2+\ve)/\tilde\eta}}\\
&\le C(\ve,\eta)\|\n^{1/2}v\|_{L^2}+C(\ve,\eta)\|\na v\|_{L^2},
\end{split}
\end{equation}
with $\tilde\eta=\min\{1,\eta\}$. This combined with \eqref{3.5}, \eqref{6.1} and \eqref{a4.22}  yields that
\begin{equation}
\begin{split}  \la{5.d2}
\|\n^\eta u\|_{L^{(2+\ve)/\tilde\eta}}\le C(\ve,\eta),\quad
\|\n^\eta u\|_{L^\infty}\le C(\ve,\eta)\|\nabla u\|_{H^1}\le C(\ve,\eta)(\|\sqrt\rho\dot u\|_{L^2}+1).
\end{split}
\end{equation}

Next, differentiating \eqref{NS}$_2$ with respect to $t$ leads to
\begin{equation}\label{7.2}
\rho u_{tt}+\rho u \cdot\nabla u_t-\nabla^\perp\big(\mu(\rho)\o_t\big)
+\nabla P_t=-\rho_{t} u_t-(\rho u )_{t}\cdot\nabla u -\nabla^\perp\big(u\cdot\nabla\mu(\rho)\o\big).
\end{equation}
Multiplying \eqref{7.2} by $u_t$ and integrating the resulting equation by parts over $\mathbb{R}^2$,
we obtain after using \eqref{NS}$_1$ and \eqref{NS}$_3$ that
\begin{equation}\label{7.3}
\begin{split}
&\frac{1}{2}\frac{d}{dt}\|\sqrt{\rho} u_t \|_{L^2}^{2}
+\int\mu(\rho)|\o_t|^2dx\\
& =-\int \rho_{t}|u_t |^2dx
-\int (\rho u )_{t}\cdot\nabla u \cdot u_t dx-\int \nabla^\perp\big(u\cdot\nabla\mu(\rho)\o\big)\cdot u_tdx\\
&  \leq C\int\rho| u ||u_t |\big(|\nabla u_t|+|\nabla u |^2
+| u ||\nabla^2 u |\big)dx  \\
& \quad +C\int\rho| u |^2|\nabla u||\nabla u_t|dx+C\int\rho|u_t|^2|\nabla u |dx+C\int|u\cdot\nabla\rho||\omega\omega_t |dx \\
& \triangleq K_1+K_2+K_3+K_4.
\end{split}
\end{equation}

Now, we will use the Garliardo-Nirenberg inequality, \eqref{dumm}, \eqref{3.5}, \eqref{6.1}-\eqref{a4.21} and \eqref{5.d1}-\eqref{5.d2} to estimate each term on the right-hand side of \eqref{7.3} as follows:
$$
\begin{aligned}
K_1
&\leq C\|\sqrt{\rho} u \|_{L^6}\|\sqrt{\rho}u_t\|_{L^2}^{1/2}\|\sqrt{\rho}u_t\|_{L^6}^{1/2}\left(\|\nabla u_t\|_{L^2}+\|\nabla u \|_{L^4}^2\right) \notag \\
&\quad +C\|\rho^{1/4} u \|_{L^{12}}^{2}\|\sqrt{\rho}u_t\|_{L^2}^{1/2}\|\sqrt{\rho}u_t\|_{L^6}^{1/2}\|\nabla^2 u \|_{L^2}
 \notag \\
&\leq C\|\sqrt{\rho}u_t\|_{L^2}^{1/2}\left(\|\sqrt\rho u_t \|_{L^2}+\|\nabla u_t\|_{L^2}\right)^{1/2}\left(\|\nabla u_t\|_{L^2}+\|\nabla^2 u \|_{L^2}\right)\notag \\
&\leq \frac{1}{6}\|\sqrt{\mu(\rho)}\o_t\|_{L^2}^{2}+C\|\sqrt{\rho}u_t\|_{L^2}^{2}+C\|\nabla^2 u \|_{L^2}^2,
\end{aligned}
$$
$$
\begin{aligned}
K_2+K_3
&  \leq C \|\n^{1/2} u\|_{L^8}^{2}\|\na u\|_{L^4} \| \na u_{t}\|_{L^{2}}+ C\|\nabla u \|_{L^2}\|\sqrt{\rho} u_t\|_{L^6}^{3/2}
\|\sqrt{\rho}u_t\|_{L^2}^{1/2} \notag \\
&  \leq \frac{1}{6}\|\sqrt{\mu(\rho)}\o_t\|_{L^2}^{2}
+C\|\sqrt{\rho}u_t\|_{L^2}^{2}+C\|\nabla^2 u \|_{L^2}^2+C,
\end{aligned}
$$
and
$$
\begin{aligned}
 K_4 &  \leq C\|u{\bar x}^{-a} \|_{L^{4q/(q-2)}}\|\na\rho{\bar x}^{a}\|_{L^q}
\|\o\|_{L^{4q/(q-2)}}\|\o_t\|_{L^2} \notag \\
&  \leq C(1+\|\nabla^2 u\|_{L^2})\|\o_t\|_{L^2}\notag \\
&  \leq  \frac{1}{6}\|\sqrt{\mu(\rho)}\o_t\|_{L^2}^{2}+C\|\nabla^2 u \|_{L^2}^2+C.
\end{aligned}
$$

Substituting $K_1$-$K_4$ into \eqref{7.3} gives
\begin{equation}\label{11.3}
\frac{d}{dt}\|\sqrt{\rho}u_t\|_{L^2}^{2}
+\int\mu(\rho)|\o_t|^2dx\leq C\|\sqrt{\rho}u_t\|_{L^2}^{2}+C\|\nabla^2 u \|_{L^2}^2+C.
\end{equation}

Finally, it follows from \eqref{3.5}, \eqref{5.d2} and Garliardo-Nirenberg inequality that
\begin{align}\label{jianew}
\int_0^T \|\sqrt{\rho} u_t\|_{L^2}^2dt&\le \int_0^T \left(\|\sqrt{\rho}\dot u\|_{L^2}^2+\|\sqrt{\rho}|u||\na u|\|_{L^2}^2\right)dt \notag \\
&\le C+C\int_0^T \|\sqrt{\rho} u \|_{L^\infty}^2\|\na u\|_{L^2}^2 dt  \\
&\le C,\notag
\end{align}
then multiplying \eqref{11.3} by $t$, using Gr\"onwall's inequality, \eqref{16.1} and \eqref{jianew}, we obtain \eqref{7.1} and complete the proof of Lemma \ref{lem3.8}. 
\end{proof}
\section{\la{se321}Nonvacuum at infinity ($\boldsymbol{\rho_\infty>0}$)}
In this section, we aim at the case of $\rho_{\infty}>0$. Analogue to Section \ref{se13}, we need to establish some necessary a priori estimates on strong solutions to the Cauchy problem \eqref{NS}-\eqref{ydxw} for the extension of the local solutions. Thus, for the initial data $\left(\rho_0, u_0\right)$ satisfying \eqref{2ct}, let $(\rho, u)$ be the strong solution to the problem \eqref{NS}-\eqref{ydxw} on $\mathbb{R}^2 \times(0, T]$ in the class \eqref{2zzx} obtained in Lemma \ref{lem21}.

Now, we establish the following key estimates:
\begin{proposition}\la{vvvt} Under the conditions of Theorem \ref{T2},  for
\bnn
\tilde{E}_0\triangleq \rho_\infty+\|\rho_0-\rho_\infty\|_{L^r\cap L^\infty}+ \|\sqrt\n_0u_0 \|_{L^2}+ \|\na u_0\|_{L^2}+ \|\na \rho_0\|_{L^q},
\enn
there is a positive constant  $C $ depending only on $r$, $q$, $\underline\mu$, $\bar{\mu'}$ and $\tilde{E}_0$ such that
\be\la{bkj}
\sup_{t\in[0, T]}\|\nabla\rho\|_{L^q}+\int_0^T\|\na u\|_{L^\infty}dt\leq C(T).
\ee
\end{proposition}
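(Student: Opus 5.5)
We follow the scheme of Proposition \ref{aupper}, replacing every step that used the integrability of $\rho$ (namely \eqref{3.1} with $p<\infty$, \eqref{l1}, and Lemma \ref{lemma2.6}) by arguments that use only $\|\rho\|_{L^\infty}\le\bar\rho$ and $\rho-\rho_\infty\in L^r$.

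\textbf{Density and energy bounds.} Transport gives $\|\rho-\rho_\infty\|_{L^r\cap L^\infty}\le\|\rho_0-\rho_\infty\|_{L^r\cap L^\infty}$, and the energy identity still gives \eqref{0.11}.

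\textbf{Control of $\|u\|_{L^2}$.} Write $\rho_\infty|u|^2=\rho|u|^2+(\rho_\infty-\rho)|u|^2$. By H\"older and Lemma \ref{lem22},
\[
\int|\rho-\rho_\infty||u|^2dx\le C\|\rho-\rho_\infty\|_{L^r}\|u\|_{L^{2r/(r-1)}}^2\le C\|u\|_{L^2}^{2(r-1)/r}\|\nabla u\|_{L^2}^{2/r}.
\]
Young's inequality then yields $\|u\|_{L^2}^2\le C\|\sqrt\rho u\|_{L^2}^2+C\|\nabla u\|_{L^2}^2$. This recovers what Lemma \ref{lemma2.6} provided and in particular gives $\|u\|_{H^1}$ bounds.

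\textbf{Lemma \ref{lem3.3}.} This carries over verbatim. Estimate \eqref{paz} follows from \eqref{2dumqa} with $F=-\rho\dot u$ and $\rho\le\bar\rho$, and $I_2$ is handled by the $\mathcal H^1$--BMO duality as before.

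\textbf{Lemma \ref{lem3.4}.} Here $J_3$ can no longer be treated via $\|P\|_{L^4}$. We use the reformulation $\tilde J_3$ displayed in the introduction: the time derivative is moved onto $\int u^k\partial_ku^j\partial_jP\,dx$, which redefines $\Psi$. The resulting bad terms are bounded as follows:
\begin{itemize}
\item $\|\nabla\dot u\|_{L^2}\|\nabla u\|_{L^2}\|\nabla P\|_{L^2}\le\varepsilon\|\nabla\dot u\|_{L^2}^2+C\|\sqrt\rho\dot u\|_{L^2}^2$, using \eqref{3.5} and \eqref{paz};
\item $\|u\|_{L^\infty}\|\nabla u\|_{L^4}^2\|\nabla P\|_{L^2}$, where $\|u\|_{L^\infty}\le C\|u\|_{L^2}^{1/2}\|\nabla^2u\|_{L^2}^{1/2}$ (or $\|u\|_{L^2}+\|\nabla u\|_{L^4}$) and $\|\nabla u\|_{L^4}^2\le C\|\nabla u\|_{L^2}\|\sqrt\rho\dot u\|_{L^2}$ by \eqref{zxza}.
\end{itemize}
For the second term, note that \eqref{2duqa} is unavailable since $\nabla\rho$ is not yet controlled. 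We instead bound $\|u\|_{L^\infty}$ through $\|u\|_{L^2}$ and $\|\nabla u\|_{L^p}$ via \eqref{zxza}, obtaining a bound of the form $C\|\sqrt\rho\dot u\|_{L^2}^{2+\theta}$ with $\theta<1$. The boundary term $\int u\cdot\nabla u\cdot\nabla P\,dx$ is controlled by $\|u\|_{L^4}\|\nabla u\|_{L^4}\|\nabla P\|_{L^2}\le\frac14\|\sqrt\rho\dot u\|_{L^2}^2+C\|\sqrt\rho\dot u\|_{L^2}^{2\theta'}$ with $\theta'<1$, so $\Psi$ remains comparable to $\|\sqrt\rho\dot u\|_{L^2}^2$ up to lower-order terms. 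The terms $J_1$ and $J_2$ are unchanged, with $\|\nabla u\|_{L^4}^4\le C\|\sqrt\rho\dot u\|_{L^2}^2\cdot\|\sqrt\rho\dot u\|_{L^2}^2$. Gr\"onwall with the weights $t^i$ then gives \eqref{133}, since $\int_0^Tt\|\sqrt\rho\dot u\|_{L^2}^2dt$ is known.

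\textbf{Bound on $\|\rho\dot u\|_{L^q}$.} This step is easier than before: $\|\rho\dot u\|_{L^q}\le\bar\rho\|\dot u\|_{L^q}$. We bound $\|\dot u\|_{L^q}\le C\|\dot u\|_{L^2}^{2/q}\|\nabla\dot u\|_{L^2}^{1-2/q}$, with $\|\dot u\|_{L^2}$ controlled by the same splitting argument as for $u$, i.e. $\|\dot u\|_{L^2}\le C(\|\sqrt\rho\dot u\|_{L^2}+\|\nabla\dot u\|_{L^2})$. This yields the analogue of \eqref{06.1}, namely $\int_0^T\big(\|\rho\dot u\|_{L^q}^{(q+1)/q}+t\|\rho\dot u\|_{L^q}^2\big)dt\le C(T)$, after the same time-weight interpolation.

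\textbf{Conclusion.} Apply Lemma \ref{le9} together with \eqref{2dumqa} and \eqref{2duqa}. Here $\|\nabla\mu(\rho)\|_{L^q}\le\bar{\mu'}\|\nabla\rho\|_{L^q}$ enters only inside the logarithm, giving \eqref{zaq}. The logarithmic Gr\"onwall argument \eqref{zaq1} then yields \eqref{bkj}.

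The main obstacle is closing the $\tilde J_3$ estimate without $\|\rho\|_{L^2}$, i.e. controlling $\|u\|_{L^\infty}$ by quantities already bounded so that the Gr\"onwall exponent stays integrable.
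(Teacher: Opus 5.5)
Your overall scheme is the paper's: the splitting $\rho_\infty|v|^2=\rho|v|^2+(\rho_\infty-\rho)|v|^2$ to recover $\|v\|_{L^2}$, Lemma \ref{lem3.3} carried over verbatim, the $\tilde J_3$ reformulation with $\|u\|_{L^\infty}$ controlled through $\|u\|_{L^2}$ and $\|\nabla u\|_{L^4}$, and then Lemma \ref{le9} with the logarithmic Gr\"onwall argument. The step that does not go through as written is the bound on $\|\rho\dot u\|_{L^q}$. You interpolate $\dot u$ itself and then use $\|\dot u\|_{L^2}\le C(\|\sqrt\rho\dot u\|_{L^2}+\|\nabla\dot u\|_{L^2})$. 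This gives
\[
\|\rho\dot u\|_{L^q}\le C\|\sqrt\rho\dot u\|_{L^2}^{2/q}\|\nabla\dot u\|_{L^2}^{1-2/q}+C\|\nabla\dot u\|_{L^2},
\]
and the last term carries no power of $\|\sqrt\rho\dot u\|_{L^2}$. Near $t=0$ the only information on $\nabla\dot u$ is $\int_0^T t\|\nabla\dot u\|_{L^2}^2dt\le C$. No compatibility condition is imposed, so there is no unweighted bound. H\"older then leaves the factor $\int_0^1 t^{-(q+1)/(q-1)}dt=\infty$. So your claimed analogue of \eqref{06.1}, with the power $(q+1)/q$, does not follow from the available estimates.

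The paper avoids this in \eqref{4.6q} by interpolating $\rho\dot u$ rather than $\dot u$: $\|\rho\dot u\|_{L^q}\le\|\rho\dot u\|_{L^2}^{\frac{2(q-1)}{q^2-2}}\|\rho\dot u\|_{L^{q^2}}^{\frac{q(q-2)}{q^2-2}}$. Only afterwards does it apply $H^1\hookrightarrow L^{q^2}$ and \eqref{zzxz}. Every term then keeps a positive power of $\|\sqrt\rho\dot u\|_{L^2}$, and the weight left after H\"older is $t^{-\frac{q(q^2-q-2)}{(q-1)(q^2-2)}}$, whose exponent is $<1$; this is \eqref{zaq2}.

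Alternatively, your weaker bound suffices for this proposition alone. The reason is that \eqref{zaq} uses $\|\rho\dot u\|_{L^q}$ only to the power $\frac{q}{2(q-1)}<1$, times a logarithm. For any $s\in\big(\frac{q}{2(q-1)},1\big)$ one has $\int_0^T\|\nabla\dot u\|_{L^2}^s dt\le\big(\int_0^T t\|\nabla\dot u\|_{L^2}^2dt\big)^{s/2}\big(\int_0^T t^{-s/(2-s)}dt\big)^{1-s/2}\le C(T)$, since $s/(2-s)<1$. Your first term is handled the same way.

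The paper's interpolation is still the better choice. The $L^{(q+1)/q}$-in-time integrability is needed afterwards for $\nabla^2u,\nabla P\in L^{(q+1)/q}(0,T;L^q)$ in Lemma \ref{lem77.5}.

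A minor point: \eqref{zxza} gives $\|\nabla u\|_{L^4}^4\le C\|\nabla u\|_{L^2}^2\|\sqrt\rho\dot u\|_{L^2}^2$, not $C\|\sqrt\rho\dot u\|_{L^2}^4$. Either form closes the Gr\"onwall step.
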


Proposition \ref{vvvt} is an easy consequence of the following lemmas, whose proof is postponed to
later.
\subsection{\la{se99}A priori estimates (I): upper bound of the $\boldsymbol{\|\nabla\rho\|_{L^q}}$}
First, similar to Lemma \ref{lem3.1} and Lemma \ref{lem3.3}, their estimate do not involve the $L^p$ integrability of density, hence we have the following energy estimates directly:
\begin{lemma}\label{lem43.3}
There exists a positive constant $C$ depending only  on $\underline\mu, \bar{\mu'}, \|\rho_0\|_{L^\infty}, \|\sqrt{\n_0} u_0\|_{L^2}$ and $\|\na u_0\|_{L^2}$
such that for $i=0,1$,
\begin{align}
&\sup_{t\in[0,T]}\|\rho(t)-\rho_\infty\|_{L^r}\leq\|\rho_0-\rho_\infty\|_{L^r},\label{14.1}\\
&\sup_{t\in[0,T]} \|\sqrt{\rho} u \|_{L^2}^2 +\int_{0}^{T}\|\nabla u \|_{L^2}^{2}dt\leq C,\label{3.596}\\
&\sup_{t\in[0,T]}t^i\|\nabla u \|_{L^2}^2
+\int_{0}^{T}t^i\|\sqrt\rho\dot{ u }\|_{L^2}^2dt\leq C.\label{02223}
\end{align}
\end{lemma}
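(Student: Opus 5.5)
We follow the proofs of Lemmas \ref{lem3.1} and \ref{lem3.3} almost verbatim; the task is to check that nowhere do they use the integrability of $\rho$ itself, only $\|\rho\|_{L^\infty}$.

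\emph{Proof of \eqref{14.1}.} Since $\div u=0$, the function $\rho-\rho_\infty$ satisfies the transport equation
$$\partial_t(\rho-\rho_\infty)+u\cdot\nabla(\rho-\rho_\infty)=0.$$
Multiply by $|\rho-\rho_\infty|^{r-2}(\rho-\rho_\infty)$ and integrate. The convection term vanishes after integration by parts, so $\|\rho-\rho_\infty\|_{L^r}$ is nonincreasing in time, which is \eqref{14.1}. The same argument with $r=\infty$, or the Lagrangian representation, gives $0\le\rho\le\bar\rho$, where $\bar\rho=\max\{\|\rho_0\|_{L^\infty},\rho_\infty\}$.

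\emph{Proof of \eqref{3.596}.} Take the $L^2$ inner product of \eqref{NS}$_2$ with $u$. This gives
$$\frac12\frac{d}{dt}\|\sqrt\rho u\|_{L^2}^2+\int\mu(\rho)\omega^2dx=0.$$
Then use $\mu\ge\underline\mu$ and \eqref{dumm}, together with $\div u=0$, to get $\|\nabla u\|_{L^2}\le C\|\omega\|_{L^2}$.

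\emph{Proof of \eqref{02223}.} The key input is \eqref{paz}. Apply \eqref{2dumqa} with $p=2$ and $F=-\rho\dot u$; this uses only $\|\rho\|_{L^\infty}$:
$$\|\nabla(\mu(\rho)\omega)\|_{L^2}+\|\nabla P\|_{L^2}\le C\|\rho\dot u\|_{L^2}\le C\bar\rho^{1/2}\|\sqrt\rho\dot u\|_{L^2}.$$
Next test \eqref{NS}$_2$ with $\dot u$.
\begin{itemize}
\item The term $I_1$ is handled exactly as before. It equals $-\frac12\frac{d}{dt}\int\mu(\rho)\omega^2dx$, using the transport of $\mu(\rho)$ and $\nabla^\perp\cdot(u\cdot\nabla u)=u\cdot\nabla\omega$.
\item For $I_2$, use $\div(u\cdot\nabla u)=\partial_iu\cdot\nabla u^i$ together with the $\mathcal H^1$--BMO duality of Lemma \ref{lem26}. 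This needs $P\in\tilde D^{1,2}$, which holds since $\nabla P\in L^2$.
\end{itemize}
These steps give \eqref{3.11}. Multiplying \eqref{3.11} by $t^i$ and applying Gr\"onwall's inequality with \eqref{3.596} then yields \eqref{02223}. For $i=1$, the extra term $\int\mu(\rho)\omega^2dx$ is integrable in time by \eqref{3.596}.

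\emph{Main obstacle.} The delicate point is justifying the integrations by parts and the duality pairing when $\rho\not\to0$ at infinity. In particular, $u\in L^2$ is not yet known at this stage. However, only $\nabla u,\nabla P\in L^2$ and $\sqrt\rho\dot u\in L^2$ are used, which holds for the local strong solutions in the class \eqref{2zzx}. The constants therefore depend only on $\underline\mu,\bar{\mu'},\bar\rho$ and the initial energies.
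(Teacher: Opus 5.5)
Your proposal is correct and takes essentially the same route as the paper. The paper proves this lemma simply by noting that the arguments of Lemmas \ref{lem3.1} and \ref{lem3.3} use only $\|\rho\|_{L^\infty}$ and never the integrability of $\rho$; your step-by-step check of this point (the $L^r$ transport estimate for $\rho-\rho_\infty$, the energy identity, \eqref{paz}, the transport of $\mu(\rho)$ in $I_1$, and the $\mathcal{H}^1$--BMO bound for $I_2$ leading to \eqref{3.11}) fills in details the paper leaves implicit.
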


With appropriate modifications to Lemma \ref{lem3.4}, we have the following estimates:
\begin{lemma}\label{lem21.4}
There exists a positive constant $C$ depending only on $r$, $\rho_\infty$, $\underline\mu$, $\bar{\mu'}$, $\|\n_0-\rho_\infty\|_{L^r\cap L^\infty}$, $\|\sqrt{\n_0} u_0\|_{L^2}$ and $\|\na u_0\|_{L^2} $ such that for $i=1,2$,
\begin{equation}\label{133q}
\sup_{t\in[0,T]}t^i\|\sqrt{\rho}\dot{ u }\|_{L^2}^2
+\int_{0}^{T}t^i\|\nabla\dot{ u }\|_{L^2}^{2}dt\leq C,
\end{equation}
and
\begin{equation}\label{i313q}
\sup_{t\in[0,T]}\left(t^\frac{2(p-1)}{p}\|\nabla u\|_{L^p}^2+t^2\|\na\big(\mu(\rho)\o\big)\|_{L^2}^2+t^2\|\na P\|_{L^2}^2\right) \leq C(p).
\end{equation}
\end{lemma}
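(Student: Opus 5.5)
I will follow the proof of Lemma \ref{lem3.4} step by step. The only change is in how the pressure term $J_3$ and the bound \eqref{l1} are handled, since both used $\|\rho\|_{L^2}$, which is no longer finite when $\rho_\infty>0$.

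\textbf{Step 1: the $\dot u$-equation.} As before, I apply $\partial_t+u\cdot\nabla$ to \eqref{NS}$_2$, test with $\dot u$ and arrive at \eqref{3.15}. The terms $J_1$ and $J_2$ are estimated verbatim. They give
\[
J_1+J_2\le -\int\mu(\rho)|\na^\perp\cdot\dot u|^2dx+\varepsilon\|\na\dot u\|_{L^2}^2+C_\varepsilon\|\na u\|_{L^4}^4 ,
\]
up to the remaining term $\int u\cdot\na\mu(\rho)\,\omega\,\na^\perp\cdot\dot u\,dx$. That term is treated exactly as in Lemma \ref{lem3.4}, via the transport identity $\partial_t\mu(\rho)+u\cdot\na\mu(\rho)=0$, so that it cancels against the $\partial_t\mu(\rho)$ term in $J_1$.

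\textbf{Step 2: the pressure term.} Instead of producing $\int P\,\partial_ju^i\partial_iu^j$ and $\|P\|_{L^4}$, I rewrite $J_3$ as the $\tilde J_3$ from the introduction. This gives
\[
\tilde J_3\le -\frac{d}{dt}\int u^k\partial_ku^j\partial_jP\,dx+C\|\na\dot u\|_{L^2}\|\na u\|_{L^2}\|\na P\|_{L^2}+C\|u\|_{L^\infty}\|\na u\|_{L^4}^2\|\na P\|_{L^2}.
\]
The coefficient bounds come from the following facts.
\begin{itemize}
\item Since $\rho-\rho_\infty\in L^r$ with $\rho\ge0$ and the far field is $\rho_\infty>0$, one gets $\|u\|_{L^2}\le C(\|\sqrt\rho u\|_{L^2}+\|\na u\|_{L^2})$ by splitting into $\{\rho\ge\rho_\infty/2\}$ and its complement, whose measure is controlled by $\|\rho-\rho_\infty\|_{L^r}$. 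Together with Lemma \ref{lem22} this handles the complement part, which is the bound \eqref{vzv1}.
\item Hence $\|u\|_{L^\infty}\le C\|u\|_{L^2}^{1/2}\|\na^2u\|_{L^2}^{1/2}+\dots$ is controlled.
\item By \eqref{paz} (now with $\|\rho\|_{L^\infty}$ only), $\|\na P\|_{L^2}\le C\|\sqrt\rho\dot u\|_{L^2}$.
\item By \eqref{zxza}, $\|\na u\|_{L^4}^2\le C\|\na u\|_{L^2}\|\sqrt\rho\dot u\|_{L^2}$.
\end{itemize}

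\textbf{Step 3: the functional.} I set
\[
\Psi(t)=\frac12\|\sqrt\rho\dot u\|_{L^2}^2+\int u^k\partial_ku^j\partial_jP\,dx .
\]
Using the bounds of Step 2 and Young's inequality,
\[
\Big|\int u\cdot\na u\cdot\na P\,dx\Big|\le \|u\|_{L^4}\|\na u\|_{L^4}\|\na P\|_{L^2}\le \frac14\|\sqrt\rho\dot u\|_{L^2}^2+C\|\na u\|_{L^2}^6+C\|\na u\|_{L^2}^{4}\|u\|_{L^2}^2 ,
\]
so $\Psi$ is equivalent to $\|\sqrt\rho\dot u\|_{L^2}^2$ up to lower-order terms bounded by \eqref{3.596} and \eqref{02223}.

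\textbf{Step 4: Gr\"onwall and the weighted bounds.} Collecting terms and using \eqref{plm} gives
\[
\Psi'+C_0\|\na\dot u\|_{L^2}^2\le C\big(\|\na u\|_{L^2}^2+1\big)\|\sqrt\rho\dot u\|_{L^2}^2\,(1+\|\sqrt\rho\dot u\|_{L^2}^2)^{\theta}
\]
for some power $\theta$. I multiply by $t^i$ ($i=1,2$) and use $\int_0^Tt^{i-1}\|\sqrt\rho\dot u\|_{L^2}^2\,dt\le C$ from \eqref{02223}. Gr\"onwall's inequality then yields \eqref{133q}. Finally, \eqref{i313q} follows exactly as before from \eqref{zxza}, \eqref{paz}, \eqref{02223} and \eqref{133q}.

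\textbf{Main obstacle.} The hard part is the quartic term $\|u\|_{L^\infty}\|\na u\|_{L^4}^2\|\na P\|_{L^2}$. It is superquadratic in $\|\sqrt\rho\dot u\|_{L^2}$, since $\|u\|_{L^\infty}$ involves $\|\na^2u\|_{L^2}\lesssim\|\sqrt\rho\dot u\|_{L^2}+C$. To close, the extra factor must be integrable in time. I would write the excess as $\|\sqrt\rho\dot u\|_{L^2}^2$ multiplied by the time-integrable factor $\|\sqrt\rho\dot u\|_{L^2}^{2}$ from \eqref{02223}. If that fails, I would bound $\|u\|_{L^\infty}$ via $\|u\|_{L^2}^{1/2}\|\na u\|_{L^4}^{1/2}$-type Gagliardo–Nirenberg inequalities to lower the power.
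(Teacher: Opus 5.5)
\textbf{Gap in the key step.} Your Steps 1--3 are sound: the cancellation of $\partial_t\mu(\rho)+u\cdot\nabla\mu(\rho)$ between $J_1$ and $J_2$, the rewriting of $J_3$ as $\tilde J_3$, the $L^2$ bound on $u$, and the equivalence of $\Psi$ with $\|\sqrt\rho\dot u\|_{L^2}^2$. The gap is in the term you correctly single out, $\|u\|_{L^\infty}\|\nabla u\|_{L^4}^2\|\nabla P\|_{L^2}$.

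You bound $\|u\|_{L^\infty}$ through $\|\nabla^2u\|_{L^2}$ and use $\|\nabla^2u\|_{L^2}\lesssim\|\sqrt\rho\dot u\|_{L^2}+C$. That bound is not available at this stage. Lemma \ref{stokes} controls only $\nabla(\mu(\rho)\omega)$ and $\nabla P$ by $\|\rho\dot u\|_{L^2}$. Passing to $\nabla\omega$, hence $\nabla^2u$, costs the term $\omega\nabla\mu(\rho)$, so \eqref{2duqa} involves $\|\nabla\rho\|_{L^q}$. In the paper, the analogue of \eqref{paztg} is derived only after the $\|\nabla\rho\|_{L^q}$ bound. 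Here that bound is Proposition \ref{vvvt}, which is proved using this very lemma. Your route is therefore circular, and it would make the constant depend on $\nabla\rho_0$ and $T$, which the statement excludes. The whole argument is designed never to touch $\nabla^2u$ before $\|\nabla\rho\|_{L^q}$ is known.

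\textbf{What is missing.} You need a bound on $u$ that involves no derivatives of the density. Lemma \ref{lem22} with $s=2$, $r=4$, combined with \eqref{zzxz}, \eqref{3.596} and \eqref{muy}, gives
\[
\|u\|_{L^\infty}\le C\|u\|_{L^2}^{1/3}\|\nabla u\|_{L^4}^{2/3}\le C\|\nabla u\|_{L^2}^{1/3}\|\sqrt\rho\dot u\|_{L^2}^{1/3}.
\]
Your fallback with exponents $1/2,1/2$ fails scaling; the exponents $1/3,2/3$ are forced. With this bound the bad term becomes $C\|\nabla u\|_{L^2}^{4/3}\|\sqrt\rho\dot u\|_{L^2}^{7/3}$.

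\textbf{Closing the estimate uniformly in $T$.} Even with the corrected bound, the plain Gr\"onwall argument of your Step 4 does not close with a $T$-independent constant. The coefficient $\|\nabla u\|_{L^2}^{4/3}\|\sqrt\rho\dot u\|_{L^2}^{1/3}$ is not uniformly integrable in time. The ``$+1$'' in your coefficient produces a factor $e^{CT}$, which would destroy the decay rates \eqref{i313q}. The paper instead exploits the time weights:
\[
t^i\|\nabla u\|_{L^2}^{4/3}\|\sqrt\rho\dot u\|_{L^2}^{7/3}=\big(t\|\nabla u\|_{L^2}^2\big)^{2/3}\big(t^i\|\sqrt\rho\dot u\|_{L^2}^2\big)^{1/6}\,t^{(5i-4)/6}\|\sqrt\rho\dot u\|_{L^2}^2 .
\]
Here $\sup_t t\|\nabla u\|_{L^2}^2\le C$ and $\int_0^T t^{(5i-4)/6}\|\sqrt\rho\dot u\|_{L^2}^2\,dt\le C$ by \eqref{02223}. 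The remaining factor $\sup_t\big(t^i\|\sqrt\rho\dot u\|_{L^2}^2\big)^{1/6}$ is absorbed into the left-hand side by Young's inequality, as in \eqref{pop}--\eqref{popq}. Only the term $C\|\nabla u\|_{L^2}^2\|\sqrt\rho\dot u\|_{L^2}^2$ is left for Gr\"onwall, and its coefficient is uniformly integrable by \eqref{3.596}.
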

\begin{proof}
First, we denote $\psi\triangleq(\rho_\infty-\rho)_+$, by direct calculations we find for any $v\in D^1$ and $\sqrt\rho v\in L^2$,
\begin{equation}
\begin{split}\label{vzv1}
\rho_\infty\|v\|_{L^2}^2&\leq \|\sqrt\rho v\|_{L^2}^2+\|\sqrt \psi v\|_{L^2}^2\\
&\leq \|\sqrt\rho v\|_{L^2}^2+C\|\psi\|_{L^r}\|v\|_{L^2}^{2(r-1)/r}\|\nabla v\|_{L^2}^{2/r}\\
&\leq \|\sqrt\rho v\|_{L^2}^2+\frac{\rho_\infty}{2}\|v\|_{L^2}^2+C\|\nabla v\|_{L^2}^2,
\end{split}
\end{equation}
due to \eqref{14.1}. Which implies that
\begin{align}\label{zzxz}
\|v\|_{L^2}\leq C\|\sqrt\rho v\|_{L^2}+C\|\nabla v\|_{L^2}.
\end{align}

Next, it follows from Lemma \ref{stokes}, \eqref{NS}$_2$ and \eqref{14.1} that
\begin{align}\label{pazz}
\|\nabla\big(\mu(\rho)\o\big)\|_{L^2}+\|\nabla P\|_{L^2}\leq C\|\sqrt\rho\dot{u}\|_{L^2},
\end{align}
which together with Gagliardo-Nirenberg inequality and \eqref{dumm} shows that
\begin{equation}
\begin{split}\label{muy}
\|\nabla u\|_{L^4}\leq C\|\mu(\rho)\o\|_{L^4}&\leq C\|\nabla u\|_{L^2}^{1/2}\|\nabla\big(\mu(\rho)\o\big)\|_{L^2}^{1/2}\\
&\leq C\|\nabla u\|_{L^2}^{1/2}\|\sqrt\rho\dot{u}\|_{L^2}^{1/2}.
\end{split}
\end{equation}
This combined with \eqref{3.596} and \eqref{zzxz} leads to
\begin{align}\label{mmb}
\|u\|_{L^\infty}
&\leq C\|u\|_{L^2}^{1/3}\|\nabla u\|_{L^4}^{2/3}\nonumber\\
&\leq C(\|\sqrt\rho u\|_{L^2}+\|\nabla u\|_{L^2})^{1/3}\|\nabla u\|_{L^2}^{1/3}\|\sqrt\rho\dot{u}\|_{L^2}^{1/3}\\
&\leq C\|\nabla u\|_{L^2}^{1/3}\|\sqrt\rho \dot u\|_{L^2}^{1/3}.\nonumber
\end{align}

Then, it follows from \eqref{3.15} that
\begin{equation}\label{3.15q}
\begin{split}
\frac12\frac{d}{dt}\int\rho|\dot{ u }|^{2}dx&=\int\Big(\pl_t\pl_2\big(\mu(\rho)\omega\big)\dot{u}^1-\pl_t\pl_1\big(\mu(\rho)\omega\big)\dot{u}^2\Big) dx\\
&\quad+\int\Big( u\cdot\na\pl_2\big(\mu(\rho)\omega\big)\dot{u}^1-u\cdot\na\pl_1\big(\mu(\rho)\omega\big)\dot{u}^2\Big)dx\\
&\quad-\int\Big(\dot{u}^j \partial_{t}\partial_{j}P +\dot{u}^j u \cdot\nabla \partial_{j}P\Big) dx\triangleq\sum\limits_{i=1}^{3}\tilde{J}_i.
\end{split}
\end{equation}
It follows from integration by parts and combining with \eqref{plm}, \eqref{muy} that
$$
\tilde{J}_1+\tilde{J}_2\leq -C_1\|\nabla\dot{ u }\|_{L^2}^{2}+C\|\nabla u\|_{L^2}^2\|\sqrt\rho\dot{u}\|_{L^2}^2.
$$
Integration by parts together with \eqref{14.1}, \eqref{3.596} and \eqref{muy} gives
\begin{equation}
\begin{split}\nonumber
\tilde{J}_3&=-\int\big(u_t^j \partial_{t}\partial_{j}P+u^k\partial_k u^j \partial_{t}\partial_{j}P +\dot{u}^j u^k\partial_k \partial_{j}P\big) dx\\
&= -\frac{d}{dt}\int u^k\partial_k u^j \partial_{j}P dx+\int\big(\partial_{t}u^k\partial_k u^j \partial_{j}P+ u^k\partial_{t}\partial_k u^j \partial_{j}P\big)dx  \\
&\quad+\int\big(\partial_{j}\dot{u}^j u^k\partial_k P+\dot{u}^j \partial_{j}u^k\partial_k P\big)dx\\
&= -\frac{d}{dt}\int u^k\partial_k u^j \partial_{j}P dx+2\int \big(\dot u^k\partial_k u^j\partial_j P-u^i\partial_i u^k\partial_k u^j\partial_j P\big)dx\\
&\quad+\int\big(\partial_{j}u^i\partial_i u^j u^k\partial_k P+\dot{u}^j \partial_{j}u^k\partial_k P\big)dx\\
&\leq \frac{d}{dt}\int u^k\partial_k u^j \partial_{j}Pdx
+C\|\dot u\|_{BMO}\|\nabla u\|_{L^2}\|\nabla P\|_{L^2}+C\|u\|_{L^\infty}\|\nabla u\|_{L^4}^2\|\nabla P\|_{L^2}\\
&\leq \frac{d}{dt}\int u^k\partial_k u^j \partial_{j}Pdx+\varepsilon\|\nabla\dot{ u }\|_{L^2}^{2}+C\left(\|\nabla u\|_{L^2}^2+\|\nabla u\|_{L^2}^{4/3}\|\sqrt\rho\dot u\|_{L^2}^{1/3}\right)\|\sqrt\rho\dot u\|_{L^2}^2.
\end{split}
\end{equation}
Substituting $\tilde{J}_1$-$\tilde{J}_3$ into \eqref{3.15q}, and choosing $\varepsilon$ sufficiently small yield that
\be\ba\label{i47q}
\tilde{\Psi}'(t)+C_1\int|\nabla\dot{ u }|^{2}dx\le C\left(\|\nabla u\|_{L^2}^2+\|\nabla u\|_{L^2}^{4/3}\|\sqrt\rho\dot u\|_{L^2}^{1/3}\right)\|\sqrt\rho\dot u\|_{L^2}^2,
\ea\ee
where
\begin{equation*}
\tilde{\Psi}(t)\triangleq\frac12\int\rho|\dot{u}|^{2}dx
-\int u^k\partial_k u^j \partial_{j}Pdx,
\end{equation*}
satisfies
\be\la{psi1q}
\frac14\int\n |\dot u|^2dx-C\|\na u\|_{L^2}^4\le \tilde{\Psi}(t)\le \int\n |\dot u|^2dx+C\|\na u\|_{L^2}^4,
\ee
due to \eqref{mnxz}-\eqref{lem1} and \eqref{pazz}.

Multiplying \eqref{i47q} by $t$, and observe
\begin{equation}
\begin{split}\label{pop}
&\int_0^T t\|\nabla u\|_{L^2}^{4/3}\|\sqrt\rho\dot u\|_{L^2}^{1/3}\|\sqrt\rho\dot u\|_{L^2}^2dt\\
&\leq C\sup_{t\in[0,T]}(t\|\nabla u\|_{L^2}^2)^{2/3}\sup_{t\in[0,T]}(t\|\sqrt\rho\dot u\|_{L^2}^2)^{1/6}\int_0^T t^{1/6}\|\sqrt\rho\dot u\|_{L^2}^2dt\\
&\leq \varepsilon\sup_{t\in[0,T]}(t\|\sqrt\rho\dot u\|_{L^2}^2)^2+C,
\end{split}
\end{equation}
due to \eqref{3.596}-\eqref{02223}.
Choosing $\varepsilon$ sufficiently small, applying Gr\"onwall's inequality to \eqref{i47q} and using \eqref{3.596} and \eqref{psi1q}-\eqref{pop}, it holds that
$$
\sup_{t\in[0,T]}t\|\sqrt{\rho}\dot{ u }\|_{L^2}^2+\int_{0}^{T}t\|\nabla\dot{ u }\|_{L^2}^{2}dt\leq C.
$$

Multiplying \eqref{i47q} by $t^2$, and observe 
\begin{equation}
\begin{split}\label{popq}
&\int_0^T t^{2}\|\nabla u\|_{L^2}^{4/3}\|\sqrt\rho\dot u\|_{L^2}^{1/3}\|\sqrt\rho\dot u\|_{L^2}^2dt\\
&\leq C\sup_{t\in[0,T]}(t\|\nabla u\|_{L^2}^2)^{2/3}\sup_{t\in[0,T]}(t^{2}\|\sqrt\rho\dot u\|_{L^2}^2)^{1/6}\int_0^T t\|\sqrt\rho\dot u\|_{L^2}^2dt\\
&\leq \varepsilon\sup_{t\in[0,T]}(t^{2}\|\sqrt\rho\dot u\|_{L^2}^2)^2+C,
\end{split}
\end{equation}
due to \eqref{3.596}-\eqref{02223}.
Choose $\varepsilon$ is sufficiently small, applying Gr\"onwall's inequality to \eqref{i47q} and using \eqref{3.596}, \eqref{psi1q} and \eqref{popq}, it hold that
$$
\sup_{t\in[0,T]}t^2\|\sqrt{\rho}\dot{ u }\|_{L^2}^2+\int_{0}^{T}t^2\|\nabla\dot{ u }\|_{L^2}^{2}dt\leq C.
$$

Finally, it follows from \eqref{zxza} that for any $p\in(2,\infty)$,
\begin{align}
\|\nabla u\|_{L^p}\leq C\|\nabla u\|_{L^2}^\frac{2}{p}\|\sqrt\rho\dot u\|_{L^2}^\frac{p-2}{p},
\end{align}
which together with \eqref{02223}, \eqref{133q} and \eqref{pazz} yields \eqref{i313q}.
The proof of Lemma \ref{lem21.4} is finished.
\end{proof}

Now we are in a position to prove Proposition \ref{vvvt}.

{\it\bf Proof of Proposition \ref{vvvt}.}
It follows from H\"older inequality,  \eqref{14.1} and \eqref{zzxz} that
\begin{align}\label{4.6q}
\|\rho\dot{ u }\|_{L^q}
 &  \leq C\|\rho\dot{ u }\|_{L^2}^{\frac{2(q-1)}{q^2-2}}
\|\dot{ u }\|_{L^{q^2}}^{\frac{q(q-2)}{q^2-2}} \notag \\
 &  \leq C\|\rho\dot{ u }\|_{L^2}^{\frac{2(q-1)}{q^2-2}}
\left(\|\dot{ u }\|_{L^2}+\|\nabla\dot{ u }\|_{L^2}\right)^{\frac{q(q-2)}{q^2-2}}  \\
 &  \leq C\left(\|\sqrt{\rho}\dot{ u }\|_{L^2}
+\|\sqrt{\rho}\dot{ u }\|_{L^2}^{\frac{2(q-1)}{q^2-2}}
\|\nabla\dot{ u }\|_{L^2}^{\frac{q(q-2)}{q^2-2}}\right),\notag
\end{align}
which together with \eqref{3.596}-\eqref{133q} leads to
\begin{equation}
\begin{split}\label{zaq2}
&\int_{0}^{T}\|\rho\dot{ u }\|_{L^q}^{\frac{q+1}{q}}dt \\
&\leq C\int_{0}^{1}\left(\|\sqrt{\rho}\dot{ u }\|_{L^2}^\frac{q+1}{q}
+\|\sqrt{\rho}\dot{ u }\|_{L^2}^{\frac{2(q^2-1)}{q(q^2-2)}}
\|\nabla\dot{ u }\|_{L^2}^{\frac{(q^2-q-2)}{q^2-2}}\right)dt\\
&\quad+C\int_{1}^{T}\left(\|\sqrt{\rho}\dot{ u }\|_{L^2}^\frac{q+1}{q}
+\|\sqrt{\rho}\dot{ u }\|_{L^2}^{\frac{2(q^2-1)}{q(q^2-2)}}
\|\nabla\dot{ u }\|_{L^2}^{\frac{(q^2-q-2)}{q^2-2}}\right)dt  \\
&  \leq C+\left(\int_0^1 \|\sqrt{\rho}\dot{ u }\|_{L^2}^2dt\right)^{\frac{(q^2-1)}{q(q^2-2)}}\left(\int_0^1t\|\nabla\dot{ u }\|_{L^2}^2dt\right)^{\frac{(q^2-q-2)}{2(q^2-2)}}\left(\int_0^1t^{-\frac{(q^2-q-2)q}{(q-1)(q^2-2)}} dt\right)^\frac{(q-1)}{2q}\\
&\quad+C\sup_{t\in[1,T]}(t^2\|\sqrt{\rho}\dot{ u }\|_{L^2}^2)^\frac{q+1}{2q}\int_{1}^{T}t^{-\frac{q+1}{q}}dt\\
&\quad+\left(\int_1^T t\|\sqrt{\rho}\dot{ u }\|_{L^2}^2dt\right)^{\frac{(q^2-1)}{q(q^2-2)}}\left(\int_1^Tt^2\|\nabla\dot{ u }\|_{L^2}^2dt\right)^{\frac{(q^2-q-2)}{2(q^2-2)}}\left(\int_1^Tt^{-\frac{2(q^3-2q-1)}{(q-1)(q^2-2)}} dt\right)^\frac{(q-1)}{2q}\\
&\leq C.
\end{split}
\end{equation}

Similar to the proof of Proposition \ref{aupper}, we then obtain
$$\sup_{t\in[0, T]}\|\nabla\rho\|_{L^q}+\int_0^T\|\na u\|_{L^\infty}dt\leq C(T).$$
We complete the proof of Proposition \ref{vvvt}.
\subsection{\la{se14}A priori estimates (II): higher order estimates}
Similar to the proof of Lemma \ref{lem3.5}, we also obtain following estimates:
\begin{lemma}\label{lem77.5}
There exists a positive constant $C$ depending only on $r$, $q$, $\rho_\infty$, $\underline\mu$, $\bar{\mu'}$, $\|\n_0-\rho_\infty\|_{L^r\cap L^\infty\cap D^{1,q}}$, $\|\sqrt{\n_0} u_0\|_{L^2}$ and $\|\na u_0\|_{L^2} $ such that
\begin{equation}
\begin{split}\label{16.1q}
&\int_{0}^{T}\left(\|\nabla^{2} u \|_{L^2}^2+\|\nabla^{2} u \|_{L^q}^{\frac{q+1}{q}}
+t\|\nabla^{2} u \|_{L^2 \cap L^q}^2\right)dt \\
&+\int_{0}^{T}\left(\|\nabla P\|_{L^2}^2+\|\nabla P \|_{L^q}^\frac{q+1}{q}+t\|\nabla P\|_{L^2\cap L^q}^2\right)dt
\leq C(T).
\end{split}
\end{equation}
\end{lemma}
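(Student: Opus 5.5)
The plan is to follow the proof of Lemma \ref{lem3.5}, replacing the integrability of $\rho$ by its boundedness, and using Proposition \ref{vvvt} for the density gradient. Since $\|\rho\|_{L^\infty}\le \|\rho_0\|_{L^\infty}\le \rho_\infty+\|\rho_0-\rho_\infty\|_{L^\infty}$, we have $\|\nabla\mu(\rho)\|_{L^q}\le \bar{\mu'}\|\nabla\rho\|_{L^q}\le C(T)$ by \eqref{bkj}. The momentum equation \eqref{NS}$_2$ can be written as the Stokes-type system \eqref{3rd1} with $F=-\rho\dot u$. Lemma \ref{stokes} with $r=2$ and $r=q$ (both lie in $[2q/(q+2),q]$), together with \eqref{02223}, then gives
\begin{align*}
\|\nabla^2u\|_{L^2}+\|\nabla P\|_{L^2}&\le C\|\sqrt\rho\dot u\|_{L^2}+C(T)\|\nabla u\|_{L^2}\le C\|\sqrt\rho\dot u\|_{L^2}+C(T),\\
\|\nabla^2u\|_{L^q}+\|\nabla P\|_{L^q}&\le C\|\rho\dot u\|_{L^q}+C(T)\|\nabla u\|_{L^2}\le C\|\rho\dot u\|_{L^q}+C(T).
\end{align*}
Here $\|\rho\dot u\|_{L^2}\le C\|\sqrt\rho\dot u\|_{L^2}$, and $\|\nabla P\|_{L^p}$ is controlled directly by \eqref{2dumqa}.

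The $L^2$-based terms then follow by integrating in time. From \eqref{02223} with $i=0,1$ we get
\[
\int_0^T\big(\|\nabla^2u\|_{L^2}^2+\|\nabla P\|_{L^2}^2\big)\,dt\le C(T),
\qquad
\int_0^T t\big(\|\nabla^2u\|_{L^2}^2+\|\nabla P\|_{L^2}^2\big)\,dt\le C(T).
\]
For the $L^q$ terms, the bound
\[
\int_0^T\big(\|\nabla^2u\|_{L^q}^{(q+1)/q}+\|\nabla P\|_{L^q}^{(q+1)/q}\big)\,dt\le C(T)
\]
follows directly from \eqref{zaq2}.

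It remains to bound $\int_0^T t\|\rho\dot u\|_{L^q}^2\,dt$, which controls $\int_0^T t(\|\nabla^2u\|_{L^q}^2+\|\nabla P\|_{L^q}^2)\,dt$. I expect this to be the only real work. I would square \eqref{4.6q} and multiply by $t$:
\[
t\|\rho\dot u\|_{L^q}^2\le Ct\|\sqrt\rho\dot u\|_{L^2}^2+Ct\,\|\sqrt\rho\dot u\|_{L^2}^{\frac{4(q-1)}{q^2-2}}\|\nabla\dot u\|_{L^2}^{\frac{2q(q-2)}{q^2-2}}.
\]
The exponents of the two factors in the second term sum to $2$. Young's inequality therefore bounds it by $Ct\|\sqrt\rho\dot u\|_{L^2}^2+Ct\|\nabla\dot u\|_{L^2}^2$. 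Both of these are integrable on $(0,T)$ by \eqref{02223} and \eqref{133q} with $i=1$. This mirrors the vacuum case, except that the weighted Lemma \ref{lemma2.6} is replaced by \eqref{zzxz}, which supplies $\|\dot u\|_{L^2}$. Collecting the three bounds gives \eqref{16.1q}.
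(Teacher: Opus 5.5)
Your proposal is correct and follows essentially the same route as the paper. It feeds the Stokes estimates of Lemma \ref{stokes} (with $\|\nabla\mu(\rho)\|_{L^q}$ controlled by \eqref{bkj}), uses \eqref{zaq2} for the $L^{(q+1)/q}$ terms, and uses \eqref{4.6q} together with \eqref{02223} and \eqref{133q} for $\int_0^T t\|\rho\dot u\|_{L^q}^2\,dt$; the only cosmetic difference is that you split the product term with Young's inequality pointwise in $t$, whereas the paper applies H\"older's inequality in time, which is equivalent because the exponents sum to $2$.
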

\begin{proof}
It from \eqref{4.6q}, we have
\begin{equation}
\begin{split}\nonumber
 &  \int_{0}^{T}t\|\rho\dot{ u }\|_{L^q}^{2}dt \\
 &  \leq C\int_{0}^{T}\left(t\|\sqrt{\rho}\dot{ u }\|_{L^2}^2
+t\|\sqrt{\rho}\dot{ u }\|_{L^2}^{\frac{4(q-1)}{q^2-2}}
\|\nabla\dot{ u }\|_{L^2}^{\frac{2q(q-2)}{q^2-2}}\right)dt  \\
 &  \leq C+C\left(\int_0^Tt\|\sqrt{\rho}\dot{ u }\|_{L^2}^2dt\right)^{\frac{2(q-1)}{q^2-2}}\left(\int_0^Tt\|\nabla\dot{ u }\|_{L^2}^2
 dt\right)^{\frac{q(q-2)}{q^2-2}}\\
 &\leq C,
\end{split}
\end{equation}
due to \eqref{02223} and \eqref{133q}.

Which combination of \eqref{2dumqa}, \eqref{paztg}, \eqref{3.596}, \eqref{pazz} and \eqref{zaq2} thus \eqref{16.1q} and completes the proof of Lemma \ref{lem77.5}.
\end{proof}

With appropriate modifications to Lemma \ref{lem3.8}, we have the following higher estimates:
\begin{lemma}\label{lem3.8a}
There exists a positive constant $C$ depending only on $r$, $q$, $\rho_\infty$, $\underline\mu$, $\bar{\mu'}$, $\|\n_0-\rho_\infty\|_{L^r\cap L^\infty\cap D^{1,q}}$, $\|\sqrt{\n_0} u_0\|_{L^2}$ and $\|\na u_0\|_{L^2} $ such that
\begin{equation}\label{7.1a}
\sup_{t\in[0,T]}t\|\sqrt{\rho}u_t \|_{L^2}^{2}
+\int_{0}^{T}t\|\nabla u_t \|_{L^2}^2dt\leq C(T).
\end{equation}
\end{lemma}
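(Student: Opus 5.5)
Our plan is to repeat the time-weighted energy argument of Lemma \ref{lem3.8}. We differentiate \eqref{NS}$_2$ in $t$ to get \eqref{7.2}, test it against $u_t$, and arrive at the identity \eqref{7.3}. What has to change is the way $K_1$--$K_4$ are bounded, because the weighted Lemmas \ref{lemma2.6}, \ref{lem3.6} and the bounds \eqref{5.d1}--\eqref{5.d2} are no longer available when $\rho_\infty>0$. Their role will be taken by the genuine $L^2$ control \eqref{zzxz}, namely $\|v\|_{L^2}\le C\|\sqrt\rho v\|_{L^2}+C\|\nabla v\|_{L^2}$. Since \eqref{zzxz} puts $u$ and $u_t$ in $L^2$, Gagliardo--Nirenberg then gives $\|u_t\|_{L^p}\le C(\|\sqrt\rho u_t\|_{L^2}+\|\nabla u_t\|_{L^2})$ for every $p\in[2,\infty)$. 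Together with $\rho\le\bar\rho$, this replaces all uses of $\|\rho^\eta v\|_{L^p}$.

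\emph{Bounding $K_1$--$K_3$.} First, \eqref{mmb} gives $\|u\|_{L^\infty}\le C\|\nabla u\|_{L^2}^{1/3}\|\sqrt\rho\dot u\|_{L^2}^{1/3}$. Second, $\|u\|_{L^p}\le C$ for all $p\ge2$ by \eqref{3.596}, \eqref{02223} and \eqref{zzxz}. For $K_1$ we estimate $\rho|u||u_t|(|\nabla u_t|+|\nabla u|^2+|u||\nabla^2u|)$ by H\"older, placing $u$ in $L^4$ or $L^\infty$, $u_t$ in $L^4$, and using $\|\nabla u\|_{L^4}$ from \eqref{muy}. The terms $K_2$ and $K_3$ are handled the same way. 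Young's inequality together with \eqref{dumm} then absorbs $\|\nabla u_t\|_{L^2}^2\le C\|\sqrt{\mu(\rho)}\o_t\|_{L^2}^2$; note that $\div u_t=0$, so \eqref{dumm} applies directly to $u_t$. All of this gives a right-hand side bounded by $C(1+\|\sqrt\rho\dot u\|_{L^2}^2)(\|\sqrt\rho u_t\|_{L^2}^2+1)+C\|\nabla^2u\|_{L^2}^2$.

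\emph{Bounding $K_4$, the main obstacle.} The term $K_4=C\int|u\cdot\nabla\rho||\o\o_t|dx$ previously relied on $\nabla(\rho\bar x^a)\in L^q$. Here we use Proposition \ref{vvvt} directly:
\[
K_4\le C\|u\|_{L^\infty}\|\nabla\rho\|_{L^q}\|\o\|_{L^{2q/(q-2)}}\|\o_t\|_{L^2}\le C(T)\|u\|_{L^\infty}\|\nabla u\|_{H^1}\|\o_t\|_{L^2}.
\]
We then bound $\|u\|_{L^\infty}$ by \eqref{mmb}, and bound $\|\nabla u\|_{H^1}\le C(1+\|\sqrt\rho\dot u\|_{L^2})$ by \eqref{paztg}, \eqref{2duqa} and Proposition \ref{vvvt}. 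This yields
\[
K_4\le\tfrac16\|\sqrt{\mu(\rho)}\o_t\|_{L^2}^2+C(T)(1+\|\sqrt\rho\dot u\|_{L^2}^{8/3}).
\]
The point to verify is that the power $8/3$ remains integrable after multiplying by $t$. Indeed $t\|\sqrt\rho\dot u\|_{L^2}^{8/3}\le (t\|\sqrt\rho\dot u\|^2_{L^2})^{1/3}\|\sqrt\rho\dot u\|_{L^2}^2$, and this is integrable on $[0,T]$ by \eqref{133q} and \eqref{02223}.

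\emph{Closing the estimate.} Collecting terms, we expect
\[
\frac{d}{dt}\|\sqrt\rho u_t\|_{L^2}^2+\underline\mu\|\o_t\|_{L^2}^2\le C(1+\|\sqrt\rho\dot u\|_{L^2}^2)\|\sqrt\rho u_t\|_{L^2}^2+C\|\nabla^2u\|_{L^2}^2+C(1+\|\sqrt\rho\dot u\|_{L^2}^{8/3}).
\]
Next we need $\int_0^T\|\sqrt\rho u_t\|_{L^2}^2dt\le C$. This follows as in \eqref{jianew}, now using $\|\sqrt\rho u\|_{L^\infty}\le C\|u\|_{L^\infty}$ together with \eqref{mmb}. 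We then multiply the inequality by $t$ and apply Gr\"onwall's inequality, using \eqref{16.1q}, \eqref{02223} and \eqref{133q}. This gives the bound on $\sup t\|\sqrt\rho u_t\|_{L^2}^2$. Finally, $\int_0^T t\|\nabla u_t\|_{L^2}^2dt\le C(T)$ follows from \eqref{dumm}, which completes \eqref{7.1a}.
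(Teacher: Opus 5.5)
Your proposal is correct and has the same skeleton as the paper's proof: differentiate \eqref{NS}$_2$ in $t$, test with $u_t$, bound $K_1$--$K_4$ using \eqref{zzxz} in place of the weighted inequalities, multiply by $t$ and apply Gr\"onwall. The differences lie in the H\"older placements.

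\textbf{The term $K_4$.} The paper puts both $u$ and $\o$ in $L^{4q/(q-2)}$. Then $\|u\|_{L^{4q/(q-2)}}\le C$ and $\|\o\|_{L^{4q/(q-2)}}\le C(1+\|\na^2u\|_{L^2})$, so $K_4\le\frac16\|\sqrt{\mu(\rho)}\o_t\|_{L^2}^2+C\|\na^2u\|_{L^2}^2+C$. This is integrable directly by \eqref{16.1q}. Your choice of $\|u\|_{L^\infty}$ via \eqref{mmb} instead produces $\|\sqrt\rho\dot u\|_{L^2}^{8/3}$, so it needs the extra integrability check you supply. That check works, but note $t\|\sqrt\rho\dot u\|_{L^2}^{8/3}=t^{2/3}(t\|\sqrt\rho\dot u\|_{L^2}^2)^{1/3}\|\sqrt\rho\dot u\|_{L^2}^2$, so you dropped a harmless factor $T^{2/3}$.

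\textbf{The term $K_1$.} Here your use of $\|u\|_{L^\infty}$ is the right choice, and you should state it explicitly rather than ``placing $u$ in $L^4$ or $L^\infty$, $u_t$ in $L^4$''. The estimate \eqref{zzxz} bounds $\|u_t\|_{L^p}$ by $\|\sqrt\rho u_t\|_{L^2}+\|\na u_t\|_{L^2}$, with no small factor in front of $\|\na u_t\|_{L^2}$. Hence every term quadratic in $u_t$ must keep a positive power of $\|\sqrt\rho u_t\|_{L^2}$. For instance,
$\int\rho|u||u_t||\na u_t|\,dx\le\bar\rho^{1/2}\|u\|_{L^\infty}\|\sqrt\rho u_t\|_{L^2}\|\na u_t\|_{L^2}$
and
$\int\rho|u_t|^2|\na u|\,dx\le\bar\rho^{1/2}\|\sqrt\rho u_t\|_{L^2}\|u_t\|_{L^4}\|\na u\|_{L^4}$.
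Combined with \eqref{mmb} and \eqref{muy}, these give exactly your Gr\"onwall coefficient $C(1+\|\sqrt\rho\dot u\|_{L^2}^2)$.

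Suppose instead you put $u$ and $u_t$ both in $L^4$ against $\|\na u_t\|_{L^2}$, as the paper's displayed bound for $\tilde K_1$ does. Then you get a contribution of size $C\|u\|_{L^4}\|\na u_t\|_{L^2}^2$ whose constant is not small, and it cannot be absorbed into $\frac16\|\sqrt{\mu(\rho)}\o_t\|_{L^2}^2$. So on this point your variant is the one that actually closes.

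With that made precise, the rest goes through: your differential inequality, the bound on $\int_0^T\|\sqrt\rho u_t\|_{L^2}^2dt$ via \eqref{mmb}, and the final Gr\"onwall step using \eqref{02223}, \eqref{133q} and \eqref{16.1q}.
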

\begin{proof}
It follows from \eqref{7.3} that
\begin{equation}
\begin{split}\label{7.3q}
&\frac{1}{2}\frac{d}{dt}\|\sqrt{\rho} u_t \|_{L^2}^{2}
+\int\mu(\rho)|\o_t|^2dx\\
& =-\int \rho_{t}|u_t |^2dx
-\int (\rho u )_{t}\cdot\nabla u \cdot u_t dx-\int \nabla^\perp\big(u\cdot\nabla\mu(\rho)\o\big)\cdot u_tdx\\
&  \leq C\int\rho| u ||u_t |(|\nabla u_t|+|\nabla u |^2
+| u ||\nabla^2 u |)dx  \\
& \quad +C\int\rho| u |^2|\nabla u||\nabla u_t|dx+C\int\rho|u_t|^2|\nabla u |dx+C\int|u\cdot\nabla\rho||\omega\omega_t |dx  \\
& \triangleq \tilde K_1+\tilde K_2+\tilde K_3+\tilde K_4.
\end{split}
\end{equation}

Now, we will use the Garliardo-Nirenberg inequality, \eqref{bkj}-\eqref{3.596}, \eqref{muy} and \eqref{mmb} to estimate each term on the right-hand side of \eqref{7.3q} as follows:
$$
\begin{aligned}\label{7.5q}
\tilde K_1 &  \leq C\|\rho\|_{L^\infty}\|u \|_{L^4}\|u_t\|_{L^4}
\left(\|\nabla u_t\|_{L^2}+\|\nabla u \|_{L^4}^2\right) \notag \\
 & \quad +C\|\rho\|_{L^\infty}\| u \|_{L^{8}}^{2}\|u_t\|_{L^4}\|\nabla^2 u \|_{L^2}\notag \\
 &  \leq C(\|\sqrt{\rho}u_t\|_{L^2}+\|\nabla u_t\|_{L^2})^{1/2}\|\nabla u_t\|_{L^2}^{1/2}(\| \na u_{t}\|_{L^{2}} +\|\nabla^2 u\|_{L^2})
\notag \\
&  \leq  \frac{1}{6}\|\sqrt{\mu(\rho)}\o_t\|_{L^2}^{2}+C\|\sqrt{\rho}u_t\|_{L^2}^2+C\|\nabla^2 u\|_{L^2}^2+C,
\end{aligned}
$$
$$
\begin{aligned}\label{7.6q}
\tilde K_2+\tilde K_3
&  \leq C \|\rho\|_{L^\infty}\| u\|_{L^8}^{2}\|\na u\|_{L^4} \| \na u_{t}\|_{L^{2}} +C\|\nabla u \|_{L^2}\| \sqrt\rho u_t\|_{L^6}^{3/2}\| \sqrt\rho u_t\|_{L^2}^{1/2}\notag \\
&  \leq \frac{1}{6}\|\sqrt{\mu(\rho)}\o_t\|_{L^2}^{2}+C\|\sqrt{\rho}u_t\|_{L^2}^2+C\|\nabla^2 u\|_{L^2}^2+C,
\end{aligned}
$$
and
$$
\begin{aligned}
\tilde K_4 &  \leq C\|u \|_{L^{4q/(q-2)}}\|\na\rho\|_{L^q}
\|\o\|_{L^{4q/(q-2)}}\|\o_t\|_{L^2} \notag \\
&  \leq C(1+\|\nabla^2 u\|_{L^2})\|\o_t\|_{L^2}\notag \\
&  \leq  \frac{1}{6}\|\sqrt{\mu(\rho)}\o_t\|_{L^2}^{2}+C\|\nabla^2 u\|_{L^2}^2+C.
\end{aligned}
$$

Substituting $\tilde K_1$-$\tilde K_4$ into \eqref{7.3q} gives
\begin{equation}\label{11.3q}
\frac{d}{dt}\|\sqrt{\rho}u_t\|_{L^2}^{2}
+\int\mu(\rho)|\o_t|^2dx\leq C\|\sqrt{\rho}u_t\|_{L^2}^2+C\|\nabla^2 u\|_{L^2}^2+C.
\end{equation}

Finally, noticing that it follows from \eqref{14.1}-\eqref{02223}, \eqref{zzxz} and Garliardo-Nirenberg inequality that
\begin{equation}
\begin{split}\label{jwaq}
\int_0^T\|\sqrt{\rho} u_t\|_{L^2}^2dt&\le \int_0^T\left(\|\sqrt{\rho}\dot u\|_{L^2}^2+\|\sqrt{\rho}|u||\na u|\|_{L^2}^2\right)dt\\
&\le C+C\int_0^T\|\rho\|_{L^\infty}\| u \|_{L^6}^2\|\na u\|_{L^3}^2dt \\
&\le C+C\int_0^T\left(\|\nabla u\|_{L^2}^2+\|\nabla^2 u\|_{L^2}^2\right)dt\\
&\le C,
\end{split}
\end{equation}
multiplying \eqref{11.3q} by $t$, using Gr\"onwall's inequality, \eqref{16.1q} and \eqref{jwaq}, we obtain \eqref{7.1a} and complete the proof of Lemma \ref{lem3.8a}.
\end{proof}
\section{Proof of Theorems \ref{T1} and \ref{T2}}\label{sec4}
With all the priori estimates in Sections \ref{se13} and \ref{se321} at hand, it is time to prove Theorems \ref{T1} and \ref{T2}. In fact, since the proofs of two theorems are totally similar, we mainly prove Theorem \ref{T1} for simplicity.

By Lemma 2.1, there exists a $T_{*}>0$ such that the Cauchy problem \eqref{NS}-\eqref{ydxw} has a unique local strong solution $(\rho,u,P)$ on $\mathbb{R}^2\times(0,T_{*}]$. We plan to extend the local solution to all time.

Set
\begin{equation}\label{20.1}
T^{*}=\sup \{T~|~(\rho,u,P)\ \text{is a strong solution on}\ \mathbb{R}^{2}\times(0,T]\}.
\end{equation}
First, for any $0<\tau<T_*<T\leq T^{*}$ with $T$ finite, one deduces from \eqref{3.5}, \eqref{i313} and
\eqref{7.1} that for any $q>2$,
\begin{equation}\label{20.2}
\nabla u \in C([\tau,T];L^2\cap L^q),
\end{equation}
where one has used the standard embedding
\begin{equation*}
L^{\infty}(\tau,T;H^1)\cap H^{1}(\tau,T;H^{-1})\hookrightarrow C(\tau,T;L^q)\ \ \text{for any}\ \ q\in(2,\infty).
\end{equation*}
Moreover, it follows from \eqref{16.1}, \eqref{6.1}, and \cite[Lemma 2.3]{L1996} that
\begin{equation}\label{20.3}
\rho\in C([0,T];L^1\cap H^1\cap W^{1,q}).
\end{equation}
Thanks to \eqref{0.11}-\eqref{3.5} and \eqref{6.1}, the standard argument yield that
$$
\rho |u|^2\in H^1(0,T;L^1)\hookrightarrow C(0,T;L^1).
$$

Finally, if $T^{*}<\infty,$ it follows from  \eqref{0.11}-\eqref{3.5}, \eqref{6.1} and \eqref{20.2}-\eqref{20.3} that
$$
(\n, u)(x,T^*)=\lim_{t\rightarrow T^*}(\n, u)(x,t),
$$
satisfies the initial conditions \eqref{ct} at $t=T^*$. Thus, taking $(\n, u)(x,T^*)$ as the initial data, Lemma 2.1 implies that one can extend the  strong solutions beyond $T^*$. This contradicts the assumption of $T^*$ in \eqref{20.1}. The proof of Theorem \ref{T1} is completed.  \hfill $\Box$


\begin{thebibliography}{10}
\bibitem{Abi2021}
H. Abidi, G.L. Gui, Global well-posedness for the 2-D inhomogeneous incompressible Navier-Stokes system with large initial data in critical spaces, Arch. Ration. Mech. Anal. 242 (2021) 1533-1570.

\bibitem{Abi2011}
H. Abidi, G.L. Gui, P. Zhang, On the decay and stability to global solutions of the 3-D inhomogeneous Navier-Stokes equations, Commun. Pure. Appl. Math. 64 (2011) 832-881.

\bibitem{zhang5}
H. Abidi, P. Zhang, On the global well-posedness of 2-D density-dependent Navier-Stokes system with variable
viscosity, J. Differ. Equ. 259 (2015) 3755-3802.

\bibitem{zhang2015}
H. Abidi, P. Zhang, Global well-posedness of 3-D density-dependent Navier-Stokes system with variable viscosity,
Sci. China Math. 58 (6)  (2015) 1129-1150.

\bibitem{AKM1990}
S.A. Antontesv, A.V. Kazhikov, V.N. Monakhov,
Boundary Value Problems in Mechanics of Nonhomogeneous Fluids,
North-Holland, Amsterdam, 1990.

\bibitem{B1}
J.T. Beale, T. Kato, A. Majda,
Remarks on the breakdown of smooth solutions for the 3-D Euler
equations, Commun. Math. Phys. 94 (1984) 61-66.

\bibitem{CK2003}
H.J. Choe, H. Kim,
Strong solutions of the Navier-Stokes equations for nonhomogeneous  incompressible fluids,
Comm. Partial Differential Equations. 28 (2003) 1183--1201.


\bibitem{CLMS1993}
R. Coifman, P.L. Lions, Y. Meyer, S. Semmes,
Compensated compactness and Hardy spaces,
J. Math. Pures Appl. 72 (1993) 247-286.

\bibitem{craig2013global}
W. Craig, X. Huang, and Y. Wang, Global well-posedness for the 3d inhomogeneous incompressible navier-stokes equations, Journal of Mathematical Fluid Mechanics, 15 (2013), pp. 747-758.

\bibitem{D1997}
B. Desjardins,
Regularity results for two-dimensional flows of multiphase viscous fluids,
Arch. Rational. Mech. Anal. 137 (1997) 135-158.

\bibitem{Fan2022}
X.Y. Fan, J.X. Li, J. Li, Global existence of strong and weak solutions to 2D compressible Navier-Stokes system in
bounded domains with large data and vacuum, Arch. Ration. Mech. Anal. 245 (1) (2022) 239-278.

\bibitem{Fe}
E. Feireisl, Dynamics of viscous compressible fluids, Oxford University Press, 2004.

\bibitem{He2021}
C. He, J. Li, B.Q. L\"u,
Global well-posedness and exponential stability of 3D Navier Stokes equations with density-dependent viscosity and vacuum in unbounded domains,
Arch. Rational. Mech. Anal. 239 (2021) 1809-1835.

\bibitem{Hua2016}
X.D. Huang, J. Li, Existence and blowup behavior of global strong solutions to the two dimensional barotropic
compressible Navier-Stokes system with vacuum and large initial data, J. Math. Pures. Appl. 106 (9) (2016) 123-154.

\bibitem{Hua2022}
X.D. Huang, J. Li, Global well-posedness of classical solutions to the Cauchy problem of two-dimensional
barotropic compressible Navier-Stokes system with vacuum and large initial data, SIAM J. Math. Anal. 54 (3)
(2022) 3192-3214.

\bibitem{H-l-r}
X.D. Huang, J.X. Li, R. Zhang, Existence and exponential stability of the global large strong solution to the 3D inhomogeneous Navier-Stokes equations with density-dependent viscosity and large velocity, Calc. Var. 65, 56 (2026).


\bibitem{HW2014}
X.D. Huang, Y. Wang,
Global strong solution with vacuum to the two-dimensional density-dependent Navier-Stokes system,
SIAM J. Math. Appl. 46 (2014) 1771-1788.

\bibitem{HW2015}
X.D. Huang, Y. Wang,
Global strong solution of 3D inhomogeneous Navier-Stokes equations with density-dependent viscosity. J. Differ. Equ. 259 (2015) 1606-1627.

\bibitem{kato}
T. Kato, Remarks on the Euler and Navier-Stokes equations in $\mathbb{R}^2$, Proc. Symp. Pure Math. Vol. 45, Amer. Math. Soc. Providence, (1986) 1-7.

\bibitem{K1974}
A.V. Kazhikov,
Resolution of boundary value problems for nonhomogeneous viscous fluids,
Dokl. Akad. Nauk. 216 (1974) 1008-1010.


\bibitem{L1996}
P.L. Lions,
Mathematical topics in fluid mechanics, vol. 1. Incompressible Models,
Oxford University Press, New York, 1996.

\bibitem{Liu2019}
X.F. Liu,
Global well-posedness to some modified 2-D nonhomogeneous Navier-Stokes equations, J. Math. Phys. 60 (2019) 081510.

\bibitem{LZ2015}
B.Q. L\"u, Z.H. Xu, X. Zhong,
On local strong solutions to the Cauchy problem of the two-dimensional density-dependent magnetohydrodynamic equations with vacuum, arXiv:1506.02156, 2015.

\bibitem{Lv2018}
B.Q. L\"u, X.D. Shi, X. Zhong,
Global existence and large time asymptotic behavior of strong solutions to the Cauchy problem of 2D density-dependent Navier Stokes-equations with vacuum, Nonlinearity. 31 (2018) 2617-2632.


\bibitem{N1959}
L. Nirenberg,
On elliptic partial differential equations,
Ann. Scuola Norm. Sup. Pisa. 13 (1959) 115-162.

\bibitem{pa2020}
M. Paicu, P. Zhang,
Striated Regularity of 2-D Inhomogeneous Incompressible Navier-Stokes System with Variable Viscosity, Commun. Math. Phys. 376 (2020) 385-439.

\bibitem{S1990}
J. Simon,
Nonhomogeneous viscous incompressible fluids: existence of velocity, density, and pressure,
SIAM J. Math. Anal. 21 (1990) 1093-1117.


\bibitem{S1993}
E.M. Stein,
Harmonic analysis: real-variable methods, orthogonality, and oscillatory integrals,
Princeton University Press, Princeton, NJ 1993.


\bibitem{Z2015}
J.W. Zhang, Global well-posedness for the incompressible Navier-Stokes equations with density-dependent viscosity coefficient, J. Differ. Equ. 259 (2015) 1722-1742.

\bibitem{Z2008}
P. Zhang, Global smooth solutions to the 2-D nonhomogeneous Navier-Stokes equations, Int. Math. Res. Not. (2008) rnn098.
\end{thebibliography}
\end{document}